\documentclass{article}
\usepackage{amsmath,amsfonts,latexsym, amsrefs,amssymb}

\usepackage{graphicx}

\setlength{\textwidth}{6.5in}
\setlength{\oddsidemargin}{0in}
\setlength{\evensidemargin}{0in}
\setlength{\topmargin}{0.45in}
\setlength{\textheight}{7.7in}
\setlength{\footskip}{0.6in}
\setlength{\headsep}{0in}

\DeclareMathOperator{\Prob}{\mathbb{P}}   
\DeclareMathOperator{\erfc}{erfc}
\DeclareMathOperator{\Vol}{Vol}
\DeclareMathOperator{\cum}{cum}
\DeclareMathOperator{\A}{d A}

\newcommand{\1}{\mathds{1}}
\usepackage{enumerate}

\usepackage{wrapfig}

\flushbottom

\usepackage{color}

\oddsidemargin=0in
\evensidemargin=0in
\textwidth=6.5in

\numberwithin{equation}{section}

\newcommand{\rd}{{\rm d}}
\newcommand{\rdA}{{\rm d A}}

\newcommand{\by}{{\bf{y}}}

\newcommand{\bv}{{\bf{v}}}

\newcommand{\al}{\alpha}

\newcommand{\be}{\begin{equation}}
\newcommand{\ee}{\end{equation}}

\newcommand{\hp}[1]{with $ #1 $-high probability}

\newcommand{\e}{{\varepsilon}}

\setcounter{tocdepth}{1}

\newcommand{\T}{\mathbb T}

\newcommand{\bT}{\T}
\newcommand{\non}{\nonumber}

\usepackage{amsmath} 
\usepackage{amssymb}
\usepackage{amsthm}
\usepackage{amsxtra}
\usepackage{amscd}
\usepackage{bbm}
\usepackage{mathrsfs}
\usepackage{bm}

\setlength{\unitlength}{1cm}

\renewcommand{\b}[1]{\bm{\mathrm{#1}}} 
\newcommand{\bb}{\mathbb} 

\renewcommand{\cal}{\mathcal}


\newcommand{\mG}{\mathcal G}

\newcommand{\ii}{\mathrm{i}} 
\newcommand{\dd}{\mathrm{d}}

\newcommand{\col}{\mathrel{\mathop:}}
\newcommand{\st}{\,\col\,}
\newcommand{\deq}{\mathrel{\mathop:}=}

\renewcommand{\epsilon}{\varepsilon}
\renewcommand{\leq}{\leqslant}
\renewcommand{\geq}{\geqslant}



\renewcommand{\le}{\leq}
\renewcommand{\ge}{\geq}


\renewcommand{\P}{\mathbb{P}}
\newcommand{\E}{\mathbb{E}}
\newcommand{\R}{\mathbb{R}}
\newcommand{\C}{\mathbb{C}}
\newcommand{\N}{\mathbb{N}}


\newcommand{\qB}[1]{\Bigl[{#1}\Bigr]}

\newcommand{\hb}[1]{\bigl\{{#1}\bigr\}}

\newcommand{\hbb}[1]{\biggl\{{#1}\biggr\}}

\newcommand{\absb}[1]{\bigl\lvert #1 \bigr\rvert}

\DeclareMathOperator{\tr}{Tr}

\DeclareMathOperator{\supp}{supp}

\DeclareMathOperator{\re}{Re}
\DeclareMathOperator{\im}{Im}

\DeclareMathOperator{\OO}{O}
\DeclareMathOperator{\oo}{o}

\theoremstyle{plain} 
\newtheorem{theorem}{Theorem}[section]
\newtheorem*{theorem*}{Theorem}
\newtheorem{lemma}[theorem]{Lemma}
\newtheorem*{lemma*}{Lemma}

\newtheorem*{corollary*}{Corollary}
\newtheorem{proposition}[theorem]{Proposition}
\newtheorem*{proposition*}{Proposition}

\newtheorem{definition}[theorem]{Definition}
\newtheorem*{definition*}{Definition}

\newtheorem*{example*}{Example}
\newtheorem{remark}[theorem]{Remark}

\newtheorem*{remark*}{Remark}
\newtheorem*{remarks*}{Remarks}

\makeatletter
\renewcommand{\section}{\@startsection
{section}
{1}
{0mm}
{-2\baselineskip}
{1\baselineskip}
{\normalfont\large\scshape\centering}} 
\makeatother

\makeatletter
\renewcommand{\subsection}{\@startsection
{subsection}
{2}
{0mm}
{-\baselineskip}
{0\baselineskip}
{\normalfont\bf} } 
\makeatother

\usepackage[T1]{fontenc}

\usepackage{dsfont}
\usepackage{stmaryrd}

\newcommand{\nc}{\normalcolor}

\begin{document}

\title{{\sc\Large The local circular law II: the edge case}\vspace{1cm}}

\date{}

\author{\vspace{0.5cm}\normalsize{\sc Paul Bourgade}${}^1$\thanks{Partially supported by NSF grant DMS-1208859}\quad\quad
{\sc Horng-Tzer Yau}${}^1$\thanks{Partially supported
by NSF grants DMS-0757425, 0804279}\quad\quad
{\sc Jun Yin}${}^2$\thanks{Partially supported
by NSF grant DMS-1001655}
 \\\\
\normalsize Department of Mathematics, Harvard University\\
\normalsize Cambridge MA 02138, USA \\ \normalsize  bourgade@math.harvard.edu \quad
\normalsize htyau@math.harvard.edu ${}^1$ \quad  \\ \\
\normalsize Department of Mathematics, University of Wisconsin-Madison \\
\normalsize Madison, WI 53706-1388, USA \ \normalsize jyin@math.wisc.edu ${}^2$\vspace{1cm}}

\maketitle

\begin{abstract}
In the first part of this article \cite{BouYauYin2012Bulk}, we proved a local version of the circular law up to the finest scale $N^{-1/2+ \e}$
for  non-Hermitian random matrices at any point $z \in \C$ with $||z| - 1| > c $ for any $c>0$ independent of
the size of the matrix. Under the main assumption that the first three moments of the matrix elements
match those of a standard Gaussian random variable after proper rescaling, we extend this result to include the edge case  $ |z|-1=\oo(1)$.
Without the vanishing third moment assumption, we prove that the circular law is valid near the spectral edge $ |z|-1=\oo(1)$
up to scale $N^{-1/4+ \e}$.
\end{abstract}

\vspace{1.5cm}

{\bf AMS Subject Classification (2010):} 15B52, 82B44

\medskip

\medskip

{\it Keywords:} local circular law, universality.

\medskip

\newpage

 \section{Introduction}

The circular law in random matrix theory describes the macroscopic limiting spectral measure of normalized non-Hermitian matrices with
independent entries. Its origin goes beck to the work of Ginibre \cite{Gin1965},
who found the joint density of the eigenvalues of such Gaussian matrices. More precisely, for an
$N\times N$ matrix with independent  entries $\frac{1}{\sqrt {N}}z_{ij}$ such that  $z_{ij}$ is
identically distributed according to the measure $\mu_g=\frac{1}{\pi}e^{-|z|^2}\rdA(z)$
($\rdA$ denotes the Lebesgue measure on $\mathbb{C}$),
its eigenvalues $\mu_1,\dots,\mu_N$ have a probability density
proportional to
\begin{equation}\label{eqn:Ginibre}
\prod_{i<j}|\mu_i-\mu_j|^2e^{-N\sum_{k}|\mu_k|^2}
\end{equation}
with respect to the Lebesgue measure on $\C^{N}$.
These random spectral measures define a determinantal point process with the
explicit kernel (see \cite{Gin1965}) 
\begin{equation}\label{eqn:GinibreKernel}
K_N(z_1,z_2)=\frac{N}{\pi}e^{-\frac{N}{2}(|z_1|^2+|z_2|^2)}\sum_{\ell=0}^{N-1}\frac{(N z_1\overline{z_2})^\ell}{\ell!}
\end{equation}
with respect to the Lebesgue measure on $\C$. This integrability property allowed Ginibre to derive the  circular law for the eigenvalues,
i.e.,
$\frac{1}{N}\rho_1^{(N)}$ converges to the uniform measure on the unit circle,
\begin{equation}\label{eqn:circular}
\frac{1}{\pi}\1_{|z|<1}\rdA(z).
\end{equation}
This limiting law also holds for real Gaussian entries \cite{Ede1997},
for which a more detailed analysis was performed in \cite{ForNag2007,Sin2007,BorSin2009}.

For non-Gaussian entries,
Girko \cite{Gir1984} argued that the macroscopic limiting spectrum is still given by
(\ref{eqn:circular}).  His main insight is commonly known as  the {\it Hermitization technique}, which
converts the convergence of complex empirical measures into the convergence of logarithmic transforms of
a family of Hermitian matrices.  If we denote the original non-Hermitian matrix by $X$ and the eigenvalues of
$X$ by $\mu_j$,  then for any ${C}^2$ function $F$ we have the identity
\be\label{id0}
\frac 1 N \sum_{j=1}^N F (\mu_j) = \frac1{4\pi N} \int \Delta F(z)   \tr   \log (X^* - z^* ) (X-z)    \rdA(z).
\ee
Due to the logarithmic singularity at $0$, it is clear that the small eigenvalues of the Hermitian matrix $(X^* - z^* ) (X-z)  $
play a special role. A key question is to
estimate  the small eigenvalues of $(X^* - z^* ) (X-z)$, or in other words, the small singular values
of $ (X-z)$.
 This problem
was not treated  in \cite{Gir1984},
but the  gap was remedied in a series of  papers.
First Bai \cite{Bai1997} was able to treat the  logarithmic  singularity assuming  bounded density and bounded high moments
for the entries of the matrix (see also \cite{BaiSil2006}).
Lower bounds on the smallest singular values were given in Rudelson, Vershynin \cite{Rud2008,RudVer2008}, and subsequently
Tao, Vu \cite{TaoVu2008}, Pan, Zhou \cite{PanZho2010} and G\"otze, Tikhomirov \cite{GotTik2010} weakened the
moments and smoothness assumptions for the circular law, till the optimal $\mbox{L}^2$ assumption,
under which the circular law was proved in \cite{TaoVuKri2010}.

In the previous article \cite{BouYauYin2012Bulk}, we proved a local version of the circular law, up to the optimal scale
$N^{-1/2 + \e}$, in the bulk of the spectrum.
More precisely,
we considered   an $N \times N$  matrix $X$ with independent real\footnote{For the sake of notational simplicity we do not consider complex entries in this paper, but the statements and proofs are similar.} centered entries with variance $ N^{-1}$.
Let $\mu_j$, $j\in \llbracket 1, N\rrbracket$ denote the eigenvalues of $X$. To state the local circular law, we first define the
notion of \emph{stochastic domination}.

\begin{definition}
Let $W=(W_N)_{N\geq 1}$ be family a  random variables and $\Psi=(\Psi_N)_{N\geq 1}$  be  deterministic  parameters.
We say that $W$ is  \emph{stochastically dominated} by  $\Psi$
if for any   $  \sigma> 0$ and $D > 0$ we have
\begin{equation*}
 \P \qB{\absb{W_N} > N^\sigma \Psi_N  } \;\leq\; N^{-D}
\end{equation*}\nc
for sufficiently large $N$.
We denote this stochastic domination property by
\begin{equation*}
W \;\prec\; \Psi\,,\quad or \quad W =\OO_\prec (\Psi).
\end{equation*}
\end{definition}

In this paper, as in \cite{BouYauYin2012Bulk}, we assume that  the probability distributions of the   matrix elements
satisfy the following uniform subexponential decay property:
\be\label{subexp}
\sup_{(i,j)\in\llbracket 1,N\rrbracket^2}\Prob\left(|\sqrt{N}X_{i,j}|>\lambda\right)\leq \vartheta^{-1} e^{-\lambda^\vartheta }
\ee
for some constant $\vartheta >0$ independent of $N$.
This condition can of course be weakened to an hypothesis of boundedness on sufficiently high moments, but the error estimates
in the following Theorem would be weakened as well.

Let $f:\mathbb{C}\to\mathbb{R}$ be a fixed smooth compactly supported function,
and $f_{z_0}(\mu)=N^{2a}f(N^a(\mu-z_0))$, where    $z_0$ depends on $N$  and
$||z_0|-1|>\tau$ for some $\tau>0$ independent of $N$, and $a$ is a fixed scaling parameter in $(0,1/2]$.
Theorem 2.2 of  \cite{BouYauYin2012Bulk} asserts that  the following estimate holds:
\begin{equation}\label{yjgq}
\left( N^{-1} \sum_{j}f_{z_0} (\mu_j)-\frac1\pi \int f_{z_0}(z)    \, \rdA(z)  \right)\prec \ N^{-1+2a }.
\end{equation}
This implies that the circular law holds after zooming, in the bulk, up to scale $N^{-1/2+\e}$.
In particular, there are neither clusters of eigenvalues nor holes in the spectrum at such scales.

We aim at understanding the circular law close to the edge of the spectrum, i.e.,  $ |z_0|-1=\oo(1)$. The following is our main result.

\begin{theorem}\label{z1}
Let  $X$ be an $N\times N$ matrix with independent centered
entries of  variances $1/N$ and vanishing third moments.   Suppose that
the distributions of the matrix elements   satisfy the subexponential decay property (\ref{subexp}).
Let $f_{z_0}$ be defined as previously and $D$ denote the unit disk.
Then  for any $a \in (0,1/2]$ and any $z_0 \in \C$, we have
\be\label{yjgq2}
 \left( N^{-1} \sum_{j}f_{z_0} (\mu_j)-\frac1\pi \int_D f_{z_0}(z)\rdA(z)     \right)\prec \ N^{-1+2a }.
\ee
Notice that the main assertion of \eqref{yjgq2}  is for  $|z_0|-1=\oo(1)$ since the other cases were proved in  \cite{BouYauYin2012Bulk},
stated in \eqref{yjgq}.
\end{theorem}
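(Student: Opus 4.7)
The plan is to follow the Hermitization strategy of Part I, with new inputs tailored to the regime $||z_0|-1|=\oo(1)$. Starting from Girko's identity \eqref{id0} applied to $F=f_{z_0}$, the quantity to estimate becomes
\[
\frac{1}{4\pi N}\int \Delta f_{z_0}(z)\left[\log\det Y_z - L_N(z)\right]\rdA(z)\prec N^{-1+2a},
\]
where $Y_z=(X-z)^*(X-z)$ and $L_N(z)$ is a deterministic counterpart chosen so that the right-hand side of \eqref{yjgq2} matches the uniform measure on $D$. Writing $\log\det Y_z = -\int_0^{T}\im\tr G_z(\ii \eta)\,\dd\eta + \tr\log(Y_z+\ii T)$ with $G_z(w)=(Y_z-w)^{-1}$, the task reduces to controlling $\tr G_z(\ii\eta)-Nm_z(\ii\eta)$ on the imaginary axis down to $\eta\gtrsim N^{-1+2a}$, uniformly for $z\in\supp(\Delta f_{z_0})$, together with a small-singular-value analysis for $X-z$.

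The central new ingredient is an optimal local law for the Hermitization of $X-z$ that remains valid when $||z|-1|=\oo(1)$. The limiting density $\rho_z$ of $Y_z$ has three distinct behaviors: bounded near $0$ for $|z|<1-c$, a spectral gap above $0$ for $|z|>1+c$, and a $\lambda^{-1/2}$ singularity at the origin exactly at $|z|=1$. A self-consistent equation analysis for the Green's function of the $2N\times 2N$ Hermitization of $X-z$, together with stability estimates that degrade in a controlled way as $||z|-1|\to 0$, should yield the entrywise bound
\[
\frac{1}{N}\tr G_z(\ii\eta)-m_z(\ii\eta)\prec\frac{1}{N\eta},
\]
uniformly in $\eta\in[N^{-1+2a},1]$ and in $z$ near the unit circle. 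Combined with rigidity of the singular values of $X-z$, this allows the imaginary-axis integral to be estimated down to the cutoff $\eta\sim N^{-1+2a}$.

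The logarithmic weight still forces an additional control of singular values below that scale. A Rudelson--Vershynin-type lower bound $\sigma_{\min}(X-z)\geq N^{-C}$ handles the extreme tail, while eigenvalues of $Y_z$ in the intermediate window $[N^{-C},N^{-1+\kappa}]$ are controlled by combining the local-law level counting with a Green's function comparison to a Gaussian reference $\wt X$. This is where the vanishing third moment enters: in the Lindeberg-type swap, the first two error terms cancel by moment matching and the residual per entry is $\OO(N^{-3/2})$, yielding a total error compatible with $N^{-1+2a}$ up to $a=1/2$. For the Ginibre reference matrix, $L_N(z)$ and the associated log-determinant expectations are extracted directly from the explicit kernel \eqref{eqn:GinibreKernel}.

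The hardest step is establishing the local law uniformly down to $|z|=1$: the stability of the fixed-point equation for $m_z$ deteriorates precisely in this regime because the square-root edge of the singular-value density collides with the origin, where the logarithm acts. Tracking this degradation quantitatively, verifying that the rigidity estimates remain consistent with scales $\eta\sim N^{-1+2a}$ up to $a=1/2$, and coupling it with the small-singular-value analysis so that no error is lost in the integration against $\Delta f_{z_0}$, is the technical heart of the argument.
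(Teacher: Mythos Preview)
Your overall Hermitization scheme matches the paper, but the central claim---that a self-consistent equation analysis yields the optimal local law $\frac{1}{N}\tr G_z(\ii\eta)-m_z(\ii\eta)\prec\frac{1}{N\eta}$ uniformly for $|z|$ near $1$ and $\eta$ down to the smallest scales---is precisely the obstacle the paper identifies and does \emph{not} attempt to prove directly. The fixed-point equation \eqref{defmc1Y} is genuinely unstable for small $|w|$ when $||z|-1|=\oo(1)$, and ``stability estimates that degrade in a controlled way'' is not a method: the degradation is severe enough that the standard bootstrap does not close. You acknowledge this is the hardest step but give no mechanism to overcome it; if you could, the vanishing third moment would play essentially no role, contrary to the theorem's hypotheses.

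The paper's route is structurally different. It proves the strong local law (Theorem~\ref{sempl}) only for $|w|\ge\e$, where stability is unproblematic. For $|w|<\e$ it establishes merely the crude a priori bound $|m(w,z)|\le(\log N)|w|^{-1/2}$ (Lemma~\ref{a priori}), not $|m-m_{\rm c}|\prec(N\eta)^{-1}$. The Helffer--Sj\"ostrand representation (Lemma~\ref{lem:ben}) then reduces the problem to controlling a single integral $Z^{(f)}_{X,{\rm c}}$ of $\re(m-m_{\rm c})$ over the small-$|w|$ region $I$. That \emph{entire} integral---not a narrow window of smallest eigenvalues---is handled by Green function comparison to the Ginibre ensemble (Lemma~\ref{wfgb}): a Lindeberg swap bounds $\E(Z^{(f)}_{X_{k-1},{\rm c}})^p-\E(Z^{(f)}_{X_k,{\rm c}})^p$ using the resolvent estimates of Lemmas~\ref{a priori2} and~\ref{5.1}, with the vanishing third moment making each swap cost $\OO_\prec(N^{-2})$. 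Closing the argument then requires $Z^{(f)}_{X,{\rm c}}\prec N^{C\e}$ for Ginibre itself, i.e.\ the full local circular law at the edge for the Gaussian case (Theorem~\ref{z1Ginibre}), proved in Section~4 by explicit cumulant computations from the determinantal kernel---a substantial analytic input that your proposal compresses to ``extracting log-determinant expectations from the kernel.''
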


Without the third moment vanishing assumption, we have the following weaker estimate. This estimate does not imply
the local law up to the finest scale $N^{-1/2+ \e}$, but it asserts that the circular law near the spectral edge
holds at least up to scale  $N^{-1/4+ \e}$.

\begin{theorem}\label{z1T}
Suppose that $X$ is an $N\times N$ matrix with independent centered
entries, variance $1/N$, satisfying the
subexponential decay property (\ref{subexp}).
Let $f_{z_0}$ be defined as previously, with $|z_0|\le C$, and $D$ denote the unit disk.
Then for any $a \in (0,1/4]$ and any $z_0 \in \C$, we have
\be\label{yjgq2T}
 \left( N^{-1} \sum_{j}f_{z_0} (\mu_j)-\frac1\pi \int_D f_{z_0}(z)\rdA(z)     \right)\prec \ N^{-1/2+2a }.
\ee
\end{theorem}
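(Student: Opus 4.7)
\bigskip

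\noindent\textbf{Proof proposal for Theorem \ref{z1T}.}

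The starting point is Girko's Hermitization identity \eqref{id0} applied to $F=f_{z_0}$, together with the analogous representation for the deterministic circular density. Writing $Y_z \deq (X-z)^*(X-z)$ and letting $\Phi(z)$ denote the deterministic logarithmic potential $\int\log(x^2)\,\rho_z(x)\,dx$, where $\rho_z$ is the limiting symmetrized singular-value density of $X-z$, one checks that $\frac{1}{4\pi}\Delta\Phi(z)=\frac{1}{\pi}\indb{|z|<1}$ (weakly). Thus the left-hand side of \eqref{yjgq2T} equals
\begin{equation*}
\frac{1}{4\pi N}\int \Delta f_{z_0}(z)\qBB{\tr\log Y_z - N\Phi(z)}\rdA(z).
\end{equation*}
After the rescaling $w=N^a(z-z_0)$, the $\Delta f_{z_0}$ factor absorbs $N^{4a-2a}=N^{2a}$, and the goal becomes to show that the integrated fluctuation $\int(\Delta f)(w)\,\pb{N^{-1}\tr\log Y_{z_0+N^{-a}w}-\Phi(z_0+N^{-a}w)}\rdA(w)$ is $\prec N^{-1/2}$ uniformly in $z_0$ with $|z_0|\le C$.

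The plan is to control the log-determinant through the standard telescoping identity
\begin{equation*}
\frac{1}{N}\tr\log Y_z \;=\; \frac{1}{N}\tr\log(Y_z+T^2) - \int_0^T 2\eta\cdot\frac{1}{N}\tr(Y_z+\eta^2)^{-1}\,\dd\eta,
\end{equation*}
valid for any $T>0$ (and with the analogous formula for $\Phi$), so that the difference $N^{-1}\tr\log Y_z-\Phi(z)$ becomes a $\dd\eta$-integral of the difference between the Stieltjes transform of the Hermitization $H_z=\bigl(\begin{smallmatrix}0 & X-z\\ (X-z)^* & 0\end{smallmatrix}\bigr)$ and its deterministic counterpart $m_z$. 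I would then split the $\eta$-integral into three regimes. For large $\eta$ (say $\eta\ge N^{-1/2+\sigma}$ for a small exponent) the local law for the Hermitian matrix $H_z$, which holds without any third-moment assumption and is already established in companion work, gives a bound $\prec (N\eta)^{-1}$ on the Stieltjes-transform fluctuation; integrating in $\eta$ produces $O_\prec(N^{-1/2+\sigma})$ worth of error after accounting for the integration by parts against $\Delta f$ in $w$. For the intermediate regime one uses the weaker pointwise bound supplied by the local law near the edge of $H_z$. For the small-$\eta$ regime, which is the genuine obstacle, one must replace the local law by a hard bound on the smallest singular value of $X-z$ of the form $\sigma_{\min}(X-z)\gec N^{-1-\tau}$ with high probability, combined with a bound on the number of singular values below a threshold $\eta$ to control $\log\det$ from below.

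The $z$-integration against the compactly supported smooth $\Delta f(w)$ is essential: fluctuations of $N^{-1}\tr\log Y_z$ that are of order $N^{-1/2}$ pointwise should average down further after integration against a smooth test function, but I would not try to exploit this extra gain here, since the target error $N^{-1/2+2a}$ is precisely what comes out of a pointwise control of $N^{-1}\tr\log Y_z-\Phi(z)$ at scale $N^{-1/2}$, with no averaging. This is exactly why the third-moment assumption is not needed for Theorem \ref{z1T}: one only needs the plain local law for the Hermitization $H_z$, plus smallest-singular-value lower bounds, and not a Green's-function-comparison argument that would require matching three moments with the Gaussian.

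The main obstacle is the control of the integral $\int_0^{\eta_0}\eta\,\frac{1}{N}\tr(Y_z+\eta^2)^{-1}\dd\eta$ down to very small $\eta$ in the critical regime $|z|=1+\oo(1)$. At the edge the limiting density $\rho_z$ no longer stays bounded below near $0$, so the estimates on the number of small singular values available at $|z|<1-\tau$ from \cite{BouYauYin2012Bulk} degrade; one must combine the weak local law near the hard edge of $H_z$ with a lower tail estimate for $\sigma_{\min}(X-z)$ of Rudelson--Vershynin type (which is robust in $z$ near the unit circle and insensitive to moment assumptions beyond the subexponential decay \eqref{subexp}) to show that the contribution of the small-$\eta$ regime is $O_\prec(N^{-1/2})$, thereby closing the argument and producing the claimed error $N^{-1/2+2a}$.
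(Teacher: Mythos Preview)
Your proposal has a genuine gap, and it rests on a misidentification of \emph{why} the third-moment hypothesis can be dropped in Theorem~\ref{z1T}.

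You assert that ``one only needs the plain local law for the Hermitization $H_z$ \dots\ and not a Green's-function-comparison argument.'' But the paper proves Theorem~\ref{z1T} by exactly the same Green-function comparison (Lindeberg swapping against Ginibre) as Theorem~\ref{z1}; see Lemma~\ref{wfgbT} and its proof at the end of Section~3. The point is that when the third moments do not match, each entry swap in \eqref{pos571}--\eqref{eqn:rec} produces an unmatched third-order term of size $O_\prec(N^{-3/2})$ rather than $O_\prec(N^{-2})$, so after $N^2$ swaps the accumulated error in $Z^{(f)}_{X,{\rm c}}$ is $O_\prec(N^{1/2})$ instead of $O_\prec(1)$. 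That loss of $N^{1/2}$ is precisely the gap between \eqref{yjgq2} and \eqref{yjgq2T}, and it is the whole content of Lemma~\ref{wfgbT}.

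The local-law input you rely on does not exist at the edge. Theorem~\ref{sempl} (and its ancestor in \cite{BouYauYin2012Bulk}) gives $|m(w)-m_{\rm c}(w)|\prec (N\eta)^{-1}$ only for $|w|\geq\varepsilon$, a \emph{fixed} constant; for $||z|-1|=\oo(1)$ and small $|w|$ the self-consistent equation \eqref{defmc1Y} is unstable, as noted in the introduction, and no local law of the form $|m-m_{\rm c}|\prec (N\eta)^{-1}$ is proved there. In your telescoping variable this means the bound you invoke is available only for $\eta\geq\sqrt{\varepsilon}$, not for $\eta\geq N^{-1/2+\sigma}$. On the region $\eta\in(0,\sqrt{\varepsilon})$ the only available input is the crude a priori bound of Lemma~\ref{a priori}, which gives $|m(-\eta^2)|+|m_{\rm c}(-\eta^2)|=O((\log N)/\eta)$; plugging this into $\int_0^{\sqrt{\varepsilon}}2\eta\,|m-m_{\rm c}|\,d\eta$ yields an $O(\sqrt{\varepsilon}\log N)$ contribution, i.e.\ order one, not $O(N^{-1/2})$. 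The smallest-singular-value bound (Lemma~\ref{smallest}) and the eigenvalue counting \eqref{nkwT} only control $O_\prec(N^{C\varepsilon})$ many small eigenvalues; they do not upgrade this to a fluctuation estimate for $m-m_{\rm c}$. So your ``intermediate regime'' and ``small-$\eta$ regime'' together carry the entire difficulty, and the paper resolves it by comparison with Ginibre, not by a direct local law.
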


Shortly after the preprint \cite{BouYauYin2012Bulk} appeared, a version of
local circular law was proved by  Tao and Vu \cite{TaoVu2012} under the assumption
that  the first three moments matching a Gaussian distribution both in the bulk and near the edge.
Both  the assumptions and conclusions of Theorem 20 \cite{TaoVu2012}, when restricted to near the edge,
are thus very similar to  Theorem \ref{z1}. On the other hand, in the bulk case the assumption of vanishing third moments were not needed in \cite{BouYauYin2012Bulk}.
Our proof in this paper follows the approach of the companion article \cite{BouYauYin2012Bulk}, except
that for small eigenvalues
we will use the Green function comparison theorem \cite{ErdYauYin2010PTRF}. On the other hand, the method in  \cite{TaoVu2012}
relies on Jensen's formula  for determinants.

A main tool in the proof of the local circular law in \cite{BouYauYin2012Bulk} was a detailed analysis
of the self-consistent equations of the  Green functions
$$
G_{ij}(w) = [(X^* - z^* ) (X-z) - w]^{-1}_{ij}.
$$
We were able
to control $G_{ij}(E + \ii \eta)$ for the energy parameter $E$ in any compact set and for sufficiently  small $\eta$
so as to use the formula \eqref{id0} for functions $F$ at  scales $N^{-1/2+ \e}$.
We proved that, in particular, the Stieltjes transform $m=N^{-1}\sum G_{ii}$ converges to $m_{\rm c}(w,z)$, a fixed point of the self-consistent equation
\be\label{defmc1Y}
g(m)=\frac{1+w\, m(1+m)^2}{|z|^2-1}
\ee
when $||z| - 1|\ge \tau > 0$. In this region of $z$,
 the fixed point equation is stable.
However, for $ |z |-1=\oo(1)$,
  (\ref{defmc1Y}) becomes unstable when $|w|$ is small.  In this paper,  we will show that $m$ is still close to the  fix point of this  equation
under the additional  condition that the third moments of the matrix entries vanish. With this condition,
we can compare $m$ of our model with the corresponding quantity for the Gaussian ensemble. In order to carry out this comparison,
we prove   a local circular law for  the Ginibre ensemble near  the edge $ |z |-1=\oo(1)$ in Section 4. We now outline the  proof
of the main result, Theorem \ref{z1}.  Step 1: we use the  Hermitization technique to convert the Theorem \ref{z1} into the problem on the distribution of the eigenvalues $\lambda_i$'s  of $(X-z)^*(X-z)$. Step 2: for the eigenvalues of order 1, i.e. $\lambda_i\geq \e$ for some $\e>0$,  we will control  $\sum \log (\lambda_i)$   via an analysis
of the self-consistent equations of the  Green functions
$$
G_{ij}(w) = [(X^* - z^* ) (X-z) - w]^{-1}_{ij}.
$$
Step 3: for the eigenvalues of order $\oo(1)$, we will show that $\sum \log (\lambda_i)$ of our model has the same asymptotic distribution as the Ginibre ensemble, via the Green function comparison method first used in \cite{ErdYauYin2010PTRF}. The local circular law for the Ginibre ensemble is proved in Appendix \ref{App:Ginibre}. Step 4: combining
the previous steps, we conclude the proof of the local circular law on the edge.

\section{Proof of Theorem \ref{z1}  and  Theorem \ref{z1T}}

\subsection{Hermitization. }\label{sec:Hermitzation}
In the following, we will use the notation
$$
 Y_z=X-z I  \quad
$$
 where $I$ is the identity operator.
Let  $\lambda_j(z)$ be   the $j$-th eigenvalue (in the  increasing ordering) of $Y^*_z  Y_z $.
We will generally omit the $z-$dependence in these notations.
Thanks to the Hermitization technique of Girko \cite{Gir1984},
the first step in proving the local circular law is to understand the local statistics of eigenvalues of $Y^*_z Y_z$.
More precisely,  (\ref{id0}) yields
\be\label{id2}
N^{-1} \sum_j f_{z_0}(\mu_j)
= \frac1{4\pi} N^{-1+2a}\int (\Delta f)(\xi)  \sum_j \log \lambda_j(z)   \rdA(\xi),
\ee
where 
\begin{equation}\label{eqn:zXi}
z=z_0+N^{-a}\xi.
\end{equation}
Roughly speaking, for any fixed $\e>0$, the sum $N^{-1}\sum_{\lambda_i\ge \e}\log \lambda_i(z)$ will approach $\int_{x\ge \e} (\log x)\rho_{\rm c}(x)\rd x$,
where $\rho_c$ is the limiting spectral density of $Y_z^*Y_z$.   To estimate the error term in this convergence, we need to know some properties of $\rho_{\rm c}$ (Subsection \ref{subsec:equilibrium}) and, most importantly, the strong local Green function estimates of Subsection \ref{subsec:Green}.  The $\lambda_i\le \e$ part will be studied with the Green function comparison method in Section 3.

\subsection{Properties of $m_{\rm c}$ and $\rho_{\rm c}$. }\label{subsec:equilibrium}
 Define the Green function
 of $Y^*_z Y_z$  and its trace by
$$
G(w):=G(w,z)=(Y^*_z Y_z-w)^{-1},\quad  m(w):=m(w,z)=\frac{1}{N}\tr G(w,z)=\frac{1}{N}\sum_{j=1}^N\frac{1}{ \lambda_j(z) - w}, \quad w = E + \ii \eta.
$$
We will also need the following version of the Green function later on:
$$
\mG(w):=\mG(w,z)= ( Y_z Y^*_z-w)^{-1}.  \quad
$$
As we will see,  for some regions of $(w, z)$, with high  probability
$m(w,z)$ converges to  $m_{\rm c}(w,z)$ pointwise,
as $N\to \infty$ where $ m_{\rm c}(w,z)$  is  the unique
solution of
 \be\label{defmc1}
 m_{\rm c}^{-1}=-w(1+m_{\rm c})+|z|^2(1+m_{\rm c})^{-1}
\ee
with positive  imaginary  part (see Section 3 in \cite{GotTik2010} for the existence and uniqueness of such a solution).
The limit  $ m_{\rm c}(w,z)$ is the Stieltjes transform of a  density $ \rho_{\rm c} (x,z)$ and   we have
\be\label{mrho}
 m_{\rm c}(w,z)= \int_\R \frac{\rho_{\rm c} (x,z)}{x-w}\rd x
\ee
whenever   $\eta>0$. The function $\rho_{\rm c} (x,z)$ is the limiting eigenvalue density of the matrix $Y^*_z Y_z$
(cf. Lemmas 4.2 and 4.3 in \cite{Bai1997}).
This measure is compactly supported and satisfies  the following bounds. Let
 \be\label{deflampm}
 \lambda_\pm :=\lambda_{\pm }(z):=\frac{( \al\pm3)^3}{8(\al\pm1)} ,\quad \al:=\sqrt{1+8|z|^2}.
\ee
Note  that $\lambda_-$ has the same sign as $|z|-1$.
It is well-known that the density $ \rho_{\rm c} (x,z)$  can be obtained from its Stieltjes transform $m_{\rm c}(x+\ii\eta,z)$ via
$$
 \rho_{\rm c} (x,z)= \frac1\pi\im  \lim_{\eta\to 0^+}m_{\rm c}(x+\ii\eta,z)
=\frac1\pi\1_{x\in[\max\{0,\lambda_-\},\lambda_+]} \im  \lim_{\eta\to 0^+}m_{\rm c}(x+\ii\eta,z).
$$
The propositions and lemmas in this subsection  summarize  the properties of $\rho_{\rm c}$ and $m_{\rm c}$ that we will need in this paper. They contain slight
extensions (to the case $|z|=1+\oo(1)$) of the analogous statements \cite{BouYauYin2012Bulk}. We omit the proof, strictly similar to
the calculation  mentioned in
\cite{BouYauYin2012Bulk}.

In the following, we use the notation $A\sim B$ when $c B \le  A\leq c^{-1}B$,
where $c>0$ is independent of $N$.

\newpage

\begin{wrapfigure}[18]{r}{0.7\textwidth}
\begin{center}
\includegraphics[width=0.7\textwidth]{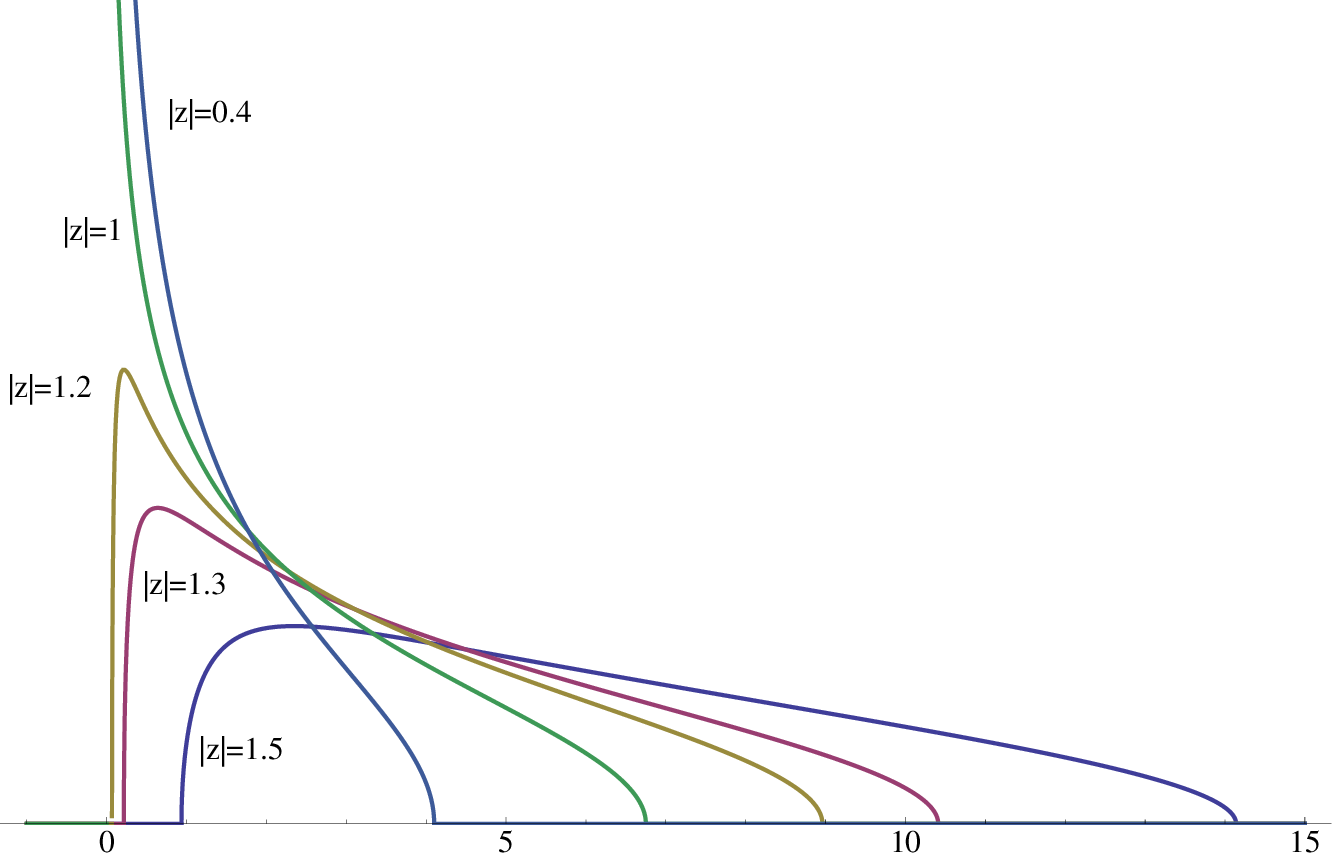}
\end{center}
\label{fig:density}
~\hspace{1cm}The limiting eigenvalues density $\rho_{\rm c}(x,z)$ for $z=1.5, 1.3, 1.2, 1, 0.4$.
\end{wrapfigure}
~

\begin{proposition} \label{prorhoc}
The limiting density $\rho_{\rm c}$ is compactly supported and
the following properties  hold.
\begin{enumerate}[(i)]
\item
The support of $\rho_{\rm c}(x, z)$ is $[\max\{0,\lambda_-\}, \lambda_+]$.
\item
As $x\to \lambda_+$ from below,  the behavior of $\rho_{\rm c}(x, z)$ is given by
$\rho_{\rm c}(x, z)\sim \sqrt {\lambda_+-x}.
$
\item For any fixed $\e>0$,
if $ \max\{0,\lambda_-\}+\e\leq x \leq \lambda_+-\e$, then   $\rho_{\rm c}(x, z)\sim 1$.
\item
Near  $\max\{0,\lambda_-\}$, if  $z$ is  allowed to change with $N$ such that  $|z|= 1+\e_N$ (with $\e_N=\oo(1)$),  then $\lambda_-=\oo(1)$ and  the behavior of $\rho_{\rm c}(x, z)$ depends on $\e_N$.  We will not need this detailed property and will only need the following upper bound: for any $\delta>0$ there is $C>0$ such that for any
$|z|\leq \delta^{-1}$, $x\in[0,\delta]$, we have
$\rho_{\rm c}(x, z)\leq C/ \sqrt x$.
 \end{enumerate}
  \end{proposition}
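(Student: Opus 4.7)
The plan is to exploit the fact that $m_{\rm c}(w,z)$ is the positive-imaginary-part root of an explicit cubic polynomial, and to deduce each assertion by tracking this polynomial's roots as $w$ approaches the real axis. Clearing denominators in \eqref{defmc1} gives
\[
P_{w,z}(m) \deq w\, m\, (1+m)^2 - (|z|^2 - 1)\, m + 1 \;=\; 0,
\]
and all four items reduce to elementary algebra on $P_{w,z}$.

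I would handle (i)--(iii) first. The support of $\rho_{\rm c}$ coincides with the set of real $E$ at which $P_{E,z}(m)$ has a pair of non-real roots, equivalently where its discriminant (as a polynomial in $m$) is negative. Under the substitution $\al = \sqrt{1+8|z|^2}$, solving $\mathrm{disc}(P_{E,z}) = 0$ in $E$ is a routine computation producing the explicit edges $\la_\pm = (\al \pm 3)^3/(8(\al \pm 1))$ of \eqref{deflampm}, and a check of signs shows the support is $[\max\{0,\la_-\}, \la_+]$, yielding (i). For (ii), at $E = \la_+$ two real roots of $P_{E,z}$ coalesce into a double root $m_+$; Taylor-expanding at $(m_+, \la_+)$ together with $\pd_m P(m_+,\la_+) = 0$, $\pd_m^2 P(m_+,\la_+) \ne 0$, $\pd_w P(m_+,\la_+)\ne 0$ yields $m_{\rm c}(E + \ii 0^+, z) - m_+ \sim \sqrt{\la_+ - E}$, whose imaginary part gives the square-root behavior. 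For (iii), on a compact subinterval of the interior of the support the discriminant is bounded away from $0$ uniformly (in $|z|$ compact and in the distance to the edges), so the complex conjugate pair of roots stays at a fixed distance from the real axis, giving $\rho_{\rm c}(x,z)\sim 1$.

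The main obstacle is (iv): as $|z|\to 1$ the soft edge $\la_-(z)$ merges with the hard edge at $0$ and the uniform lower bound on the discriminant from (iii) is lost. My approach is to work with the explicit root structure. Dividing $P_{w,z}(m) = 0$ by $m$ gives
\[
w\,(1+m)^2 \;=\; (|z|^2 - 1) - \frac{1}{m};
\]
at $w=0$ one recovers $m_{\rm c}(0,z) = 1/(|z|^2 - 1)$, which blows up precisely in the critical regime $|z|=1$. For $|z|\le \de^{-1}$, a direct manipulation of this rewritten equation near $w=0$ produces the bound $|1 + m_{\rm c}(w,z)| \le C\, |w|^{-1/2}$, hence $|\im m_{\rm c}(w,z)| \le C\, |w|^{-1/2}$, and the desired estimate then follows via \eqref{mrho} upon taking $w = x + \ii 0^+$. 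The delicate point is uniformity across all regimes $\e_N = |z|-1 \to 0$, since the geometry of the support qualitatively depends on the sign of $\e_N$; I would handle this by continuity rather than by branch-selection via Cardano's formula, since the inequality is between functions continuous in $(w,z)$ away from $w=0$, holds pointwise by the algebraic analysis whenever $|z|\ne 1$, and extends to $|z|=1$ by passing to the limit. Since only this upper bound is used in later sections, no finer description of $\rho_{\rm c}$ near the origin is required.
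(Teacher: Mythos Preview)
Your approach is correct and is the natural one; the paper itself omits the proof entirely, referring to the companion paper \cite{BouYauYin2012Bulk} where the same cubic $P_{w,z}(m)=wm(1+m)^2-(|z|^2-1)m+1$ is analyzed. Items (i)--(iii) via the discriminant and the double-root expansion at $\lambda_+$ are exactly the standard route, and your key bound in (iv), $|1+m_{\rm c}(w,z)|\le C|w|^{-1/2}$, is precisely what the paper records separately as \eqref{dpjb} and \eqref{dA2}.

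One small comment on (iv): the phrase ``direct manipulation'' is the only place the argument is thin. A clean way to make it explicit is to rewrite the cubic in $u=1+m_{\rm c}$ as $wu^3-wu^2-(|z|^2-1)u+|z|^2=0$ and observe by the triangle inequality that no root can satisfy $|u|>A|w|^{-1/2}$ once $A^2>||z|^2-1|+1$ and $|w|$ is small; this gives the bound \emph{uniformly} in $|z|\le\delta^{-1}$, including $|z|=1$, so your separate continuity argument for the case $|z|=1$ is not actually needed. Also note that the remark ``$m_{\rm c}(0,z)=1/(|z|^2-1)$'' is only literally correct for $|z|>1$; for $|z|\le 1$ the boundary value $m_{\rm c}(0^+,z)$ diverges (consistent with $\rho_{\rm c}(x,z)\sim x^{-1/2}$ there), though this does not affect your argument since you only use the cubic away from $w=0$.
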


From this proposition, we obtain the following bound regarding the Stieltjes transform of $\rho_{\rm c}$ (see \eqref{mrho}).

\begin{proposition}\label{tnf}
Uniformly in $z$ and $w$ in any compact set, the Stieltjes transform of $\rho_{\rm c}$ is bounded by
 \be\label{dpjb}
 m_{\rm c}(w,z)=\OO({  |w|}^{-1/2}  ).
 \ee
Moreover, for any fixed $\e>0$, uniformly in $\e<|w|<\e^{-1}$, $|z|<\e^{-1}$,  we have
$m_{\rm c}\sim1$.
\end{proposition}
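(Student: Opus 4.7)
The plan is to derive both estimates from the integral representation \eqref{mrho} combined with the density bounds in Proposition \ref{prorhoc}.

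For \eqref{dpjb}, the main input is the pointwise estimate $\rho_{\rm c}(x,z)\leq C/\sqrt{x}$ on $[0,\delta]$ (Proposition \ref{prorhoc}(iv), uniform for $|z|$ in a compact set), together with boundedness of $\rho_{\rm c}$ on the remainder $[\delta,\lambda_+]$ of its (compact) support. I would split $|m_{\rm c}(w,z)|\leq\int_0^{\lambda_+}\rho_{\rm c}(x,z)/|x-w|\,dx$ at $\delta$. The piece on $[\delta,\lambda_+]$ is uniformly $O(1)$ for $w$ in the compact set, which is $O(|w|^{-1/2})$ for bounded $|w|$. For the piece on $[0,\delta]$, the scaling substitution $x=|w|u$ gives
\[
\int_0^\delta \frac{dx}{\sqrt{x}\,|x-w|}=\frac{1}{\sqrt{|w|}}\int_0^{\delta/|w|}\frac{du}{\sqrt{u}\,|u-\hat w|},\qquad \hat w:=w/|w|,
\]
and the inner integral is uniformly bounded for $w$ in the given compact set, yielding the claimed scaling.

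For the two-sided estimate $m_{\rm c}\sim 1$ when $\epsilon<|w|<\epsilon^{-1}$ and $|z|<\epsilon^{-1}$, the upper bound $|m_{\rm c}|\leq C(\epsilon)$ follows directly from \eqref{dpjb}. For the lower bound, I would invoke the self-consistent equation \eqref{defmc1}: if $|m_{\rm c}|\leq 1/2$, then $|1+m_{\rm c}|\in[1/2,3/2]$, so
\[
|m_{\rm c}^{-1}|\leq |w|\,|1+m_{\rm c}|+\frac{|z|^2}{|1+m_{\rm c}|}\leq \frac{3}{2\epsilon}+\frac{2}{\epsilon^2}=:C(\epsilon),
\]
forcing $|m_{\rm c}|\geq 1/C(\epsilon)$.

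The main technical point is the uniform boundedness of the normalized integral $\int_0^{\delta/|w|}du/(\sqrt{u}|u-\hat w|)$ as $\hat w$ varies. When $\hat w$ approaches the positive real axis this integrand nearly develops a non-integrable singularity at $u=1$, so a crude triangle-inequality bound is not quite sharp. This is handled via the factorization $x-w=(\sqrt{x}-\sqrt{w})(\sqrt{x}+\sqrt{w})$ using the principal square root, under which $|\sqrt{x}+\sqrt{w}|\geq\sqrt{|w|}$ on $x\geq 0$, combined with $\im w\geq 0$ (in the Stieltjes setting) to yield the required $O(|w|^{-1/2})$ control.
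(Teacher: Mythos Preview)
Your overall plan---derive \eqref{dpjb} from the integral representation \eqref{mrho} together with the density bound $\rho_{\rm c}(x,z)\le C/\sqrt{x}$ of Proposition~\ref{prorhoc}(iv)---is exactly what the paper indicates (``From this proposition, we obtain the following bound\ldots''), and your argument for the lower bound in the second part via \eqref{defmc1} is clean and correct.

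There is, however, a genuine gap in the upper bound. The factorization $x-w=(\sqrt{x}-\sqrt{w})(\sqrt{x}+\sqrt{w})$ together with $|\sqrt{x}+\sqrt{w}|\ge \sqrt{|w|}$ reduces the problem to
\[
\frac{1}{\sqrt{|w|}}\int_0^\delta \frac{dx}{\sqrt{x}\,|\sqrt{x}-\sqrt{w}|}
=\frac{2}{\sqrt{|w|}}\int_0^{\sqrt{\delta}}\frac{dt}{|t-\sqrt{w}|},
\]
and this last integral still diverges like $\log\bigl(1/\im\sqrt{w}\bigr)$ as $w$ approaches the positive real axis. So you have not actually removed the logarithm you correctly identified; the triangle-inequality route $|m_{\rm c}|\le\int\rho_{\rm c}/|x-w|$ simply cannot give the uniform $O(|w|^{-1/2})$ bound once $w$ is allowed near $\supp\rho_{\rm c}$. (The same issue affects your claim that the piece on $[\delta,\lambda_+]$ is ``uniformly $O(1)$'': that integral also blows up logarithmically when $w$ approaches a real point in $[\delta,\lambda_+]$.)

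The fix is to run your own Part~2 argument in reverse. Multiply \eqref{defmc1} by $m_{\rm c}(1+m_{\rm c})$ to get
\[
w\,m_{\rm c}(1+m_{\rm c})^2=(|z|^2-1)m_{\rm c}-1.
\]
If $|m_{\rm c}|\ge 2$, then $|1+m_{\rm c}|\ge |m_{\rm c}|/2$, so the left side is at least $|w|\,|m_{\rm c}|^3/4$, while the right side is at most $(|z|^2+1)|m_{\rm c}|$. This yields $|m_{\rm c}|^2\le C(|z|)/|w|$, hence $|m_{\rm c}|\le C|w|^{-1/2}$ uniformly; the case $|m_{\rm c}|<2$ is trivial for bounded $|w|$. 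This algebraic route avoids the integral representation entirely for the upper bound and gives the uniform estimate with no logarithmic loss.
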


We now
collect some technical properties of $m_{\rm c}$ used in this paper.
Define   $\kappa:= \kappa (w, z) $ as the distance from $E$ to $\{\lambda_+, \lambda_-\}$:
\be\label{37}
\kappa=\min\{|E-\lambda_-|, |E-\lambda_+|\} .
\ee
For $|z|\le 1$, we have $\lambda_-< 0$, so in this case we define $\kappa:=|E-\lambda_+|$.

 The following two lemmas, concerning the case $c \le |w| \le c^{-1}$ ,  are analogous to Lemma 4.1-4.3 of \cite{BouYauYin2012Bulk}, and the proofs
are essentially the same.
Notice that   the properties of $\rho_c(x)$  used in \cite{BouYauYin2012Bulk} when $c \le |w| \le c^{-1}$ can be summarized as follows:
(1) $\rho_c(x) \sim \sqrt{\lambda_+-x}$ if $0< c \le x\le \lambda_+ $.
(2) $\int_{0}^{\oo(1)} \rho_c(x)=\oo(1)$.
From  Proposition \ref{prorhoc},  these two estimates hold uniformly for all $|z|\le C$, including $|z|-1=\oo(1)$.
We therefore can use the proofs in \cite{BouYauYin2012Bulk} to obtain the following two Lemmas.

  \begin{lemma} \label{pmcc13}
 There exists $\tau_0>0$ such that for  any $\tau \le \tau_0$, if  $||z|-1|\le   \tau $
and  $    \tau \leq |w|\le \tau^{-1}  $
then the  following properties concerning  $m_{\rm c}$ hold (all constants in the following estimates  depending on $\tau$).
Notice that by  \eqref{deflampm}, we have  in this case
 $\lambda_-=\frac{( \al-3)^3}{8(\al-1)} = \OO(||z|-1|^3)=\OO(\tau^3)$.
 \begin{itemize}
\item[Case 1:] $E\geq \lambda_+$ and  $|w-\lambda_+|\ge \tau $. We have
 \be\label{A17n}
|\re m_{\rm c}|\sim 1,\quad0\geq \re m_{\rm c}\geq -\frac12 , \quad \im m_{\rm c}\sim \eta.
 \ee
\item[Case 2:]  $|w-\lambda_+|\le \tau$. We have
\be\label{A18n}
m_{\rm c}(w, z)=- \frac2{3+\al} +  \sqrt{\frac{8(1+\al)^3}{\al(3+\al)^5}}\, (w-\lambda_+ )^{1/2} +\OO(\lambda_+-w), \ee
 and
 \begin{align}\label{esmallfakene}
\im m_{\rm c}\sim & \left\{\begin{array}{cc}
 \frac{\eta}{\sqrt{\kappa}} & \mbox{if  $\kappa\ge\eta$ and $E\ge \lambda_+$,} \\  & \\
\sqrt{\eta} & \mbox{if $\kappa\le \eta$ or $E\le \lambda_+$.}
\end{array}
\right.
\end{align}
\item[Case 3:]     $|w-\lambda_+|\ge  \tau$ and  $E\leq \lambda_+$ (notice that $E $ is allowed to be smaller than $\lambda_-$).
We have
\be\label{A21t}
|m_{\rm c}|\sim 1,\quad \im m_{\rm c}\sim 1.
\ee
\end{itemize}
\end{lemma}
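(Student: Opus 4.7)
The plan is to extract all three claims directly from the defining cubic (\ref{defmc1}), rewritten as the polynomial identity
$$P(m,w,z) \deq w m (1+m)^2 + (1+m) - |z|^2 m = 0,$$
viewed as an equation satisfied by $m_{\rm c}$. Because the coefficients of $P$ are polynomial in $(w,z)$, the qualitative shape of the physical branch $\im m_{\rm c} > 0$ is governed by the zeros of $\partial_m P$, i.e.\ by the edges $\lambda_\pm$. Under our hypotheses $||z|-1| \le \tau$ and $|w| \ge \tau$, we have $\lambda_- = O(\tau^3)$, so the lower edge is well-separated from the region of interest; only the upper edge $\lambda_+$ and the bulk behavior between the edges enter. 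This means the cubic is uniformly non-degenerate except near $w = \lambda_+$, and all implied constants depend only on $\tau$.

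Case 2 is the heart of the lemma. The edge $\lambda_+$ is characterized as the unique real $w$ near which $P(\cdot, w, z)$ has a double root in $m$; solving $P = 0$ and $\partial_m P = 0$ simultaneously reproduces $\lambda_+ = (\alpha+3)^3/(8(\alpha+1))$ and $m_{\rm c}(\lambda_+, z) = -2/(3+\alpha)$. A Puiseux expansion at this branch point then gives
$$m_{\rm c}(w,z) - m_{\rm c}(\lambda_+,z) = c(z)\,\sqrt{w - \lambda_+} + O(w - \lambda_+),$$
where $c(z)$ is computed from $\partial_m^2 P$ at the edge, producing the explicit constant $\sqrt{8(1+\alpha)^3/(\alpha(3+\alpha)^5)}$ of (\ref{A18n}). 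Selecting the branch so that $\im m_{\rm c} > 0$ for $\eta > 0$ then yields (\ref{esmallfakene}) by direct computation: if $E \ge \lambda_+$ and $\kappa \ge \eta$, then $\sqrt{w-\lambda_+} = \sqrt{\kappa} + O(\eta/\sqrt{\kappa})$, so $\im m_{\rm c} \sim \eta/\sqrt{\kappa}$; if instead $\kappa \le \eta$ or $E \le \lambda_+$, then $\im \sqrt{w-\lambda_+} \sim \sqrt{\eta}$.

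Cases 1 and 3 are handled away from the branch point by the implicit function theorem applied to $P$. For Case 1 ($E \ge \lambda_+$, $|w-\lambda_+| \ge \tau$), the equation $P(\cdot, E, z) = 0$ has a unique real root in $(-1/2, 0)$; the sign and size of $\re m_{\rm c}$ follow by inspecting the monotonicity of the cubic there, and $\im m_{\rm c} \sim \eta$ is obtained by differentiating $P(m_{\rm c}, w, z) = 0$ in $\eta$ and using $\partial_m P \sim 1$. For Case 3 ($E \le \lambda_+$, $|w-\lambda_+| \ge \tau$), the point $E$ lies at distance $\gtrsim \tau$ from both edges, so the square-root density of Proposition \ref{prorhoc} combined with the Stieltjes representation (\ref{mrho}) yields $\im m_{\rm c}(E,z) = \pi \rho_{\rm c}(E, z) \sim 1$ and $|m_{\rm c}| \sim 1$.

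The main obstacle is uniformity as $|z| \to 1$, since $\lambda_-$ then degenerates to $0$ and the cubic acquires a near-coincidence of two of its critical features at the origin. This could in principle spoil the stability of the implicit function argument near $w \approx 0$, but the assumption $|w| \ge \tau$ together with $\lambda_- = O(\tau^3)$ keeps $w$ at distance $\gtrsim \tau$ from both $\lambda_-$ and the singular point $0$. Consequently the implicit function theorem and the Puiseux expansion at $\lambda_+$ have constants controlled uniformly in $z$ on the strip $||z|-1| \le \tau$, which is precisely the uniformity alluded to in the paragraph preceding the lemma and allows the proofs of \cite{BouYauYin2012Bulk}, originally written for $||z|-1| > \tau$, to be transported to the edge regime with no essential modification.
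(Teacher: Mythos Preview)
Your proposal is correct and aligned with the paper's treatment, though you supply more explicit detail where the paper simply refers back to \cite{BouYauYin2012Bulk}. The paper's ``proof'' is the paragraph preceding the lemma: it observes that the arguments of Lemmas 4.1--4.3 in \cite{BouYauYin2012Bulk} depend on $\rho_{\rm c}$ only through the two properties (1) $\rho_{\rm c}(x) \sim \sqrt{\lambda_+ - x}$ for $c \le x \le \lambda_+$ and (2) $\int_0^{\oo(1)}\rho_{\rm c}(x)\,\rd x = \oo(1)$, and that by Proposition~\ref{prorhoc} both hold uniformly for $||z|-1|=\oo(1)$, so the original proofs carry over verbatim.

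Your route---analyzing the cubic $P(m,w,z)=0$ directly via a Puiseux expansion at $\lambda_+$ and the implicit function theorem elsewhere---is of course how those lemmas in \cite{BouYauYin2012Bulk} were proved in the first place, and your uniformity discussion (that $|w|\ge\tau$ and $\lambda_-=\OO(\tau^3)$ keep the analysis away from the degeneration at the origin) is exactly the content of the paper's remark that properties (1)--(2) extend uniformly. So the two presentations are the same argument at different levels of abstraction: the paper isolates which features of $\rho_{\rm c}$ drive the proof and checks them, while you re-derive those features from the cubic. Either is fine; yours is more self-contained, the paper's is terser.
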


\begin{lemma}\label{pmcc12.5}   There exists $\tau_0>0$ such that for  any $\tau \le \tau_0$   if    the conditions  $||z|-1|\le \tau $ and  $\tau\leq |w|\le \tau^{-1}$ hold,   then  we have
the following three bounds  concerning  $m_{\rm c}$ (all constants in the following estimates  depend on $\tau$):
\be\label{dA2}
|m_{\rm c}+1|\sim |m_{\rm c}|\sim |w|^{-1/2},
\ee
  \be\label{26ssa}
 \left |  \im \frac{1}{ w(1+m_{\rm c})}  \right |  \leq C  \im  m_{\rm c},   \ee
\be\label{ny27}
 \left|(-1 + |z^2|)
   \left( m_{\rm c}-\frac{-2}{3+\al}\right)  \left( m_{\rm c}-\frac{-2}{3-\al}\right)\right|\geq C\frac{\sqrt{\kappa+\eta}}{ |w|}.
\ee
\end{lemma}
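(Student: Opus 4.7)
The three estimates all concern the unique solution $m_c(w,z)$ to the cubic self-consistent equation \eqref{defmc1} in the near-edge regime $||z|-1|\le\tau$, $\tau\le|w|\le\tau^{-1}$. The authors indicate the proofs parallel Lemma 4.1--4.3 in the bulk companion \cite{BouYauYin2012Bulk}, the only required properties of $\rho_c$ being the square-root vanishing at $\lambda_+$ and the control of integrated mass near zero, both of which Proposition \ref{prorhoc} provides uniformly for $|z|=1+o(1)$. All constants below depend on $\tau$. My plan is to reduce each estimate to algebraic manipulation of the polynomial form $w\,m_c(1+m_c)^2+1=(|z|^2-1)m_c$ obtained by clearing denominators in \eqref{defmc1}, together with the inversion $w=-((1-|z|^2)m_c+1)/(m_c(1+m_c)^2)$ viewing $w$ as a function of $m_c$.

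For \eqref{dA2}, the upper bound is already \eqref{dpjb}. For the matching lower bound I would argue by contradiction: if $|m_c|$ were much smaller than $|w|^{-1/2}$, then the left-hand side $m_c^{-1}$ in \eqref{defmc1} would exceed $|w|^{1/2}$, whereas the right-hand side $-w(1+m_c)+|z|^2/(1+m_c)$ is forced to remain bounded when $||z|^2-1|$ is small and $|w|\ge\tau$. A symmetric substitution $u=1+m_c$ in the cubic yields $|1+m_c|\sim|w|^{-1/2}$.

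For \eqref{26ssa} I would expand $\im\,1/(w(1+m_c))=-\im(w(1+m_c))/|w(1+m_c)|^2$ and use \eqref{defmc1} to express $\im(w(1+m_c))$ as a bounded combination of $\im m_c$ and $\im w=\eta$. Since $\eta\lesssim\im m_c$ inside the spectrum by Lemma \ref{pmcc13} and $|w(1+m_c)|\sim 1$ by \eqref{dA2}, this closes the bound.

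The main obstacle is \eqref{ny27}. My plan is to differentiate the inversion formula with respect to $m_c$, which after simplification produces
\[
\frac{dw}{dm_c}=-\frac{2(|z|^2-1)\bigl(m_c+\tfrac{2}{3+\alpha}\bigr)\bigl(m_c+\tfrac{2}{3-\alpha}\bigr)}{m_c^2(1+m_c)^3}.
\]
The specific factors $-\tfrac{2}{3\pm\alpha}$ are the two roots of the numerator quadratic $2(|z|^2-1)m^2-3m-1$ and hence coincide with the critical values $m_c(\lambda_\pm)$ of $m_c\mapsto w$, as confirmed by \eqref{A18n}. The apparent singularity at $|z|=1$ in the factor $\tfrac{2}{3-\alpha}$ is cancelled by $(|z|^2-1)$ via the identity $\alpha-3\sim(|z|^2-1)$. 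Solving the displayed formula for the product in \eqref{ny27} gives it as $-\tfrac12\,m_c^2(1+m_c)^3\,dw/dm_c$. The lower bound $|dw/dm_c|\gtrsim\sqrt{\kappa+\eta}$ then follows from the square-root expansion \eqref{A18n} in Case 2 of Lemma \ref{pmcc13} (near $\lambda_+$) and from $|m_c|,\im m_c\sim 1$ in Cases 1 and 3 (away from the edges); combined with $|m_c^2(1+m_c)^3|\sim|w|^{-5/2}$ from \eqref{dA2} and the fact that $|w|$ is bounded by $\tau$-dependent constants from above and below, we obtain \eqref{ny27}.
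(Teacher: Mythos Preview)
Your plan is sound and essentially what the companion paper does; the paper itself gives no direct argument here beyond observing that the two inputs from Proposition~\ref{prorhoc} (square-root vanishing at $\lambda_+$ and negligible mass near $0$) carry over uniformly to $||z|-1|\le\tau$, so the proofs of Lemmas~4.1--4.3 in \cite{BouYauYin2012Bulk} transfer verbatim. Your sketch reconstructs that argument, and the key algebraic identity
\[
(|z|^2-1)\Bigl(m_c+\tfrac{2}{3+\alpha}\Bigr)\Bigl(m_c+\tfrac{2}{3-\alpha}\Bigr)=-\tfrac12\,m_c^2(1+m_c)^3\,\frac{dw}{dm_c}
\]
is exactly the right device for \eqref{ny27}.

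Two small corrections. First, your contradiction argument for the lower bound in \eqref{dA2} does not close: if $|m_c|\ll|w|^{-1/2}$ then $|m_c^{-1}|\gg|w|^{1/2}$, but $|w|^{1/2}$ is merely a bounded quantity here, so this alone is no contradiction. The clean route is simply to quote Proposition~\ref{tnf}, which already states $m_c\sim1$ for $\e<|w|<\e^{-1}$, and since $|w|^{-1/2}\sim1$ in the present regime this gives both bounds at once. For $|1+m_c|$ the substitution is not literally symmetric; instead argue directly that $m_c\to-1$ forces $|z|^2(1+m_c)^{-1}\to\infty$ in \eqref{defmc1} while the left side stays bounded, which is impossible for $|z|\sim1$.

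Second, in your case analysis for \eqref{ny27} you write ``$\im m_c\sim1$ in Cases~1 and~3''. This is correct in Case~3 but false in Case~1, where \eqref{A17n} gives $\im m_c\sim\eta$, possibly small. The fix in Case~1 is not through $\im m_c$ but through the Stieltjes representation: since $E\ge\lambda_+$ and $|w-\lambda_+|\ge\tau$, every $x\in\supp\rho_c$ satisfies $|x-w|^2\ge\tfrac12|w-\lambda_+|^2\ge\tau^2/2$, whence $|m_c'(w)|\le 2/\tau^2$ and $|dw/dm_c|\ge\tau^2/2\gtrsim\sqrt{\kappa+\eta}$ (the last inequality because $\kappa+\eta$ is bounded by a $\tau$-dependent constant). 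In Case~3 your argument via $\im m_c\sim1$ is fine, once you also use the identity $(|z|^2-1)\bigl(m_c+\tfrac{2}{3-\alpha}\bigr)=(|z|^2-1)m_c-\tfrac{\alpha+3}{4}$ to see that this factor is bounded below by $\tfrac{\alpha+3}{4}-C\tau\ge\tfrac12$ for $\tau$ small.
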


\subsection{The Strong Green function estimates. }\label{subsec:Green}
We now state  precisely the estimate regarding  the convergence of  $m$ to $m_{\rm c}$.
Since the matrix $Y^*_z Y_z$ is symmetric, we will follow the approach of \cite{ErdYauYin2010Adv}.
We will use extensively the following definition of high probability events.

\begin{definition}[High probability events]\label{def:hp}
 Define
\be\label{phi}
\varphi\;\deq\; (\log N)^{\log\log N}\,.
\ee
Let $\zeta> 0$.
We say that an $N$-dependent event $\Omega$ holds with \emph{$\zeta$-high probability} if there is some constant $C$ such that
$$
\P(\Omega^c) \;\leq\; N^C \exp(-\varphi^\zeta)
$$
for large enough $N$.
\end{definition}
 For $  \alpha \geq 0$,  define the $z$-dependent  set
$$
\b S(\alpha)\;\deq\; \hb{w \in \C \st  \max (\lambda_-/5, 0) \le   E \leq 5\lambda_+ \,,\;     \varphi^\alpha N^{-1} |m_{\rm c}|^{-1}\leq \eta \leq 10 },
$$
 where $\varphi$ is defined in \eqref{phi}.  Here  we have  suppressed the explicit $z$-dependence.

 \begin{theorem}  [Strong local Green function estimates] \label{sempl}
 Let $\e>0$ be given. Suppose
that $z$ is bounded in $\mathbb{C}$, uniformly in $N$.
Then  for any $\zeta>0$, there exists $C_\zeta>0$ such that  the following event  holds  \hp{\zeta}:
 \be\label{res:sempl}
\bigcap_{w \in{ {\b S}}(C_\zeta),|w|>\e} \hbb{|m(w)-m_{\rm c}(w)|  \leq \varphi^{C_\zeta} \frac{1}{N\eta}}.
 \ee
Moreover,
the individual  matrix elements of
the Green function  satisfy, \hp{\zeta},
\begin{align}\label{Lambdaofinal}
\bigcap_{w \in{ {\b S}}(C_\zeta),|w|>\e} \hbb{\max_{ij}\left|G_{ij}-m_{\rm c}\delta_{ij}\right| \leq \varphi^{C_\zeta} \left(\sqrt{\frac{\im\,  m_{\rm c}  }{N\eta}}+ \frac{1}{N\eta}\right)}.
\end{align}
\end{theorem}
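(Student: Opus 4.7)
The plan is to follow the Green function / self-consistent equation strategy of \cite{BouYauYin2012Bulk}, adapted to handle the degenerating stability near the edge $||z|-1|=\oo(1)$. The argument has three layers: (i) derivation of an approximate self-consistent equation for each diagonal Green entry $G_{ii}$ via Schur's complement, (ii) stability inversion using Lemma~\ref{pmcc12.5}, and (iii) a continuity bootstrap in the spectral parameter $\eta$ starting from the trivial scale $\eta=10$.

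First I would apply Schur's complement to $M=Y_z^*Y_z-w$, which gives
\[
\frac{1}{G_{ii}}=M_{ii}-\sum_{k,l\neq i} M_{ik}\,G^{(i)}_{kl}\,M_{li},
\]
where $G^{(i)}$ denotes the minor resolvent. Writing the $i$-th column of $Y_z$ as $\b y_i=\b x_i-z\b e_i$ and using the column-independence of $X$, the right-hand side decomposes into the deterministic main term $-w(1+m)+|z|^2(1+m)^{-1}$ plus a random error $Z_i$ consisting of the centered quadratic form $\b x_i^*G^{(i)}\b x_i-N^{-1}\tr G^{(i)}$, linear-in-$z$ cross terms $z\b e_i^*G^{(i)}\b x_i+\bar z\b x_i^*G^{(i)}\b e_i$, and lower-order corrections from $G-G^{(i)}$. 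Standard large deviation bounds for quadratic forms in subexponentially decaying variables, together with the Ward identity $\sum_l|G^{(i)}_{kl}|^2=\eta^{-1}\im G^{(i)}_{kk}$, then yield $|Z_i|\prec\sqrt{(\im m)/(N\eta)}+(N\eta)^{-1}$.

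Second, subtracting the defining equation \eqref{defmc1} of $m_{\rm c}$ from the averaged self-consistent equation and invoking the stability bound \eqref{ny27}, which on $\e\leq|w|\leq\e^{-1}$ is bounded below by a constant multiple of $\sqrt{\kappa+\eta}/|w|$, one obtains the quadratic inequality
\[
\sqrt{\kappa+\eta}\,|m-m_{\rm c}|+|m-m_{\rm c}|^2\ \prec\ \sqrt{\frac{\im m_{\rm c}}{N\eta}}+\frac{1}{N\eta}.
\]
Combining this with the fluctuation-averaging lemma of \cite{ErdYauYin2010Adv} applied to $N^{-1}\sum_i Z_i$ sharpens the averaged bound to $|m-m_{\rm c}|\prec(N\eta)^{-1}$, giving \eqref{res:sempl}; the entrywise bound \eqref{Lambdaofinal} then follows from the unaveraged self-consistent equation together with the analogous resolvent identities for the off-diagonal $G_{ij}$. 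The whole estimate is propagated by a bootstrap: starting at $\eta=10$, where $\|G\|\leq 1/\eta$ makes everything trivial, one descends along a lattice of spacing $N^{-3}$, using the Lipschitz continuity of $G$ in $w$ and the fact that the quadratic inequality admits a unique small solution once a weaker bound is already known at a slightly larger $\eta$.

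The principal obstacle is that the stability factor $\sqrt{\kappa+\eta}$ in \eqref{ny27} degenerates at the spectral edge, and in the regime $||z|-1|=\oo(1)$ the lower endpoint $\lambda_-=\OO(||z|-1|^3)$ itself approaches $0$, so the bulk argument of \cite{BouYauYin2012Bulk} does not apply verbatim. What saves us is the assumption $|w|>\e$, which keeps $w$ bounded away from this problematic endpoint; together with the square-root upper-edge behavior $\rho_{\rm c}(x)\sim\sqrt{\lambda_+-x}$ from Proposition~\ref{prorhoc}(ii), this ensures that $\kappa+\eta$ remains comparable to $(\im m_{\rm c})^2$ in the relevant regime, so the quadratic inequality is invertible with the stated rate. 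A secondary technical point is the treatment of the $z$-linear cross terms in $Z_i$, which are absent in the classical Wishart case $z=0$ and require their own concentration bound via \eqref{subexp}, though they do not change the final rates.
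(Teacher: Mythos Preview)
Your proposal is correct and follows essentially the same route as the paper: the paper's proof simply observes that the argument of Theorem~3.4 in \cite{BouYauYin2012Bulk} goes through verbatim once the $m_{\rm c}$-properties needed there (Lemmas~4.1--4.2 of that paper) are replaced by the edge-uniform versions Lemmas~\ref{pmcc13} and~\ref{pmcc12.5}, which is exactly the strategy you outline. One small inaccuracy: the Schur complement does not directly produce the main term $-w(1+m)+|z|^2(1+m)^{-1}$ in one step --- rather, \eqref{110} and \eqref{110c} give a \emph{coupled} system between $G_{ii}$ and $\mG_{ii}^{(i,\emptyset)}$, and the form you wrote emerges only after substituting one into the other; this does not affect the overall argument but should be stated correctly.
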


\begin{proof}
The proof mimics the one of Theorem 3.4 in \cite{BouYauYin2012Bulk}, in  the case of general $z$ but restricted to $|w|\ge \e$. 
Indeed, in the special case $|w|\ge \e$, Theorem 3.4 in \cite{BouYauYin2012Bulk} (which is stated for$||z|-1|>0$) actually also applies for $|z|=1$:
its proof only needs
some properties of $m_c$ in this case, which are Lemmas 4.1 and 4.2 in \cite{BouYauYin2012Bulk}, and have as analogues (even for $|z|=1$) Lemmas \ref{pmcc13} and \ref{pmcc12.5}
from the previous subsection.
\end{proof}

\subsection{Conclusion. }
 Following the Hermitization explained in the previous section, in order to prove Theorem \ref{z1}, we need to properly bound (\ref{id2}).
Now $\log \lambda_j(z)$ needs to be evaluated even for $z$ close to the edge, and for this we first define a proper approximation of the logarithm, with a
cutoff at scale $N^{-2+ \e}$.

\begin{definition}\label{defphiY}
Let $h(x)$ be a smooth  increasing  function supported on $[1,+\infty]$  with  $h(x)=1$
for $x\geq 2 $ and $h(x)=0$  for $x\leq 1$.
For any $\e>0$, define $\phi$ on $\R_+$ by
\be\label{defvarphi}
 \phi(x)=\phi_\e(x)= h(N^{2-2\e} x)\, (\log x)\,\left(1- h\left(\frac{x}{2\lambda_+}\right)\right).
 \ee
 \end{definition}

We now prove that $\phi$ is a proper approximation for $\log$ in the sense that
\be\label{jatb}
\tr \left( \log (Y^* Y ) -  \phi(Y^* Y ) \right) \prec  N^{C\e}
\ee
uniformly in $z$ in any compact set, where  $C$ is an absolute constant independent of $N$.

For this purpose,  we first bound the number of small eigenvalues of $Y^* Y$. This
is the aim of the following lemma,
proved in Subsection \ref{sec:upperBound}.

\begin{lemma} \label{a priori}
Suppose that $|w|+ |z| \le M$ for some constant $M$ independent of $N$.
Under the same assumptions as Theorem \ref{z1T},
for any $\zeta>0$, there exists $C_\zeta$  such that
if
 \be\label{eta}
 \eta \geq \varphi^{C_\zeta} N^{-1}|w|^{1/2}
 \ee
then we have
 \be\label{52s}
 |m(w,z)|\leq (\log N) |w|^{-1/2}
 \ee
 \hp{\zeta} (notice that if we take  $\eta \sim |w|$ then the restriction is
 $|w|\ge N^{-2}\varphi^{2C_\zeta}$).
\end{lemma}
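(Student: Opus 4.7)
I would split according to whether $|w|$ is bounded below by a fixed constant or small, and treat the two regimes by different methods. For $|w| \geq \epsilon$ with $\epsilon$ a constant independent of $N$, the prescribed lower bound $\eta \geq \varphi^{C_\zeta} N^{-1} |w|^{1/2}$ together with the estimate $|m_{\rm c}| \sim |w|^{-1/2}$ from Proposition \ref{tnf} verifies the hypothesis of Theorem \ref{sempl}, giving $|m - m_{\rm c}| \leq \varphi^{C_\zeta}/(N\eta)$ \hp{\zeta}. Combined with the bound $|m_{\rm c}| \leq C|w|^{-1/2}$ of Proposition \ref{tnf}, this yields $|m| \leq (\log N)|w|^{-1/2}$ for large $N$, so only the regime $|w| < \epsilon$ requires real work.

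For $|w| < \epsilon$, near the hard edge $w=0$ where the self-consistent equation is unstable, my plan is to control $|m(E + \ii \eta)|$ through the eigenvalue counting function $\mathcal{N}(E;L) \deq \#\{j : |\lambda_j - E| \leq L\}$. The key reduction is that if one has a counting bound $\mathcal{N}(E;L) \leq C'(\log N) N\sqrt{L}$ at every dyadic scale $L \in [\varphi^{2C_\zeta} N^{-2}, \epsilon]$, then the dyadic decomposition of $\sum_j (\lambda_j - w)^{-1}$ gives $|m(E + \ii \eta)| \leq O((\log N)/\sqrt{|w|})$ immediately: the geometric series converges because the $\sqrt L$ growth of $\mathcal{N}(E;L)$ is exactly matched by the $1/\sqrt L$ weight appearing in the dyadic sum, consistent with the density upper bound $\rho_{\rm c}(x) \leq C/\sqrt x$ near $0$ from Proposition \ref{prorhoc}(iv).

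To establish the counting bound I would use the elementary inequality $\mathcal{N}(E;L) \leq 2NL\,\mathrm{Im}\,m(E + \ii L)$ valid for any Hermitian matrix, and bootstrap downward in $L$ via a stopping-time argument rather than a strict inductive one. The base case $L \sim \epsilon$ is supplied by Theorem \ref{sempl}, which gives $\mathrm{Im}\,m(E + \ii \epsilon) = O(1)$ and hence $\mathcal{N}(E;\epsilon) = O(N)$. For smaller $L$, I would define $L_\star$ to be the largest scale in the range where the desired counting bound fails, observe that it then holds at every $L \in (L_\star, \epsilon]$, plug this family of counting estimates into the dyadic sum to obtain $|m(E + \ii L_\star)| \leq C'(\log N)/\sqrt{L_\star}$, and thereby derive the opposite inequality $\mathcal{N}(E;L_\star) \leq 2C'(\log N) N \sqrt{L_\star}$, contradicting the definition of $L_\star$ once $C$ is chosen larger than $2C'$. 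A union bound over a sufficiently fine net in $E$, made possible by the trivial Lipschitz estimate $|m(E_1 + \ii \eta) - m(E_2 + \ii \eta)| \leq |E_1-E_2|/\eta^2$, promotes the pointwise bound to the uniform statement of the lemma.

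The main obstacle is precisely to avoid an exponential loss in the number of bootstrap steps: since there are $\Theta(\log N)$ dyadic scales separating $\epsilon$ from $\varphi^{2C_\zeta} N^{-2}$, a naive induction that multiplies the constant at each step would accumulate a factor of $N^{O(1)}$ and destroy the bound. Framing the argument as a single stopping-time violation rather than strict dyadic induction is what circumvents this: the constants appear only once, from a geometric summation, and do not compound. The hypothesis $|w|+|z| \leq M$ is used to keep the a priori estimates on $m_{\rm c}$ furnished by Propositions \ref{tnf} and \ref{prorhoc} uniform over the regime of interest.
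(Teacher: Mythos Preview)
Your stopping-time argument for $|w|<\epsilon$ does not close. Trace the constants: if the counting bound $\mathcal N(E;L)\le A(\log N)N\sqrt L$ holds for all dyadic $L>L_\star$, then the dyadic decomposition of $|m(E+\ii L_\star)|$ gives
\[
|m(E+\ii L_\star)|\;\le\; \frac{A(\log N)}{\sqrt{L_\star}}\sum_{k\ge 0}\frac{2}{2^{k/2}}\;+\;O(1)\;=\;\frac{cA(\log N)}{\sqrt{L_\star}}
\]
with $c=2/(1-2^{-1/2})>1$, so $\mathcal N(E;L_\star)\le 2NL_\star\,\im m\le 2cA(\log N)N\sqrt{L_\star}$. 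Since $2c>1$, this is \emph{weaker} than the bound you assumed to fail at $L_\star$, and no choice of the target constant produces a contradiction. Your sentence ``the constants appear only once, from a geometric summation, and do not compound'' misdiagnoses the issue: the geometric constant is a multiplicative factor on $A$, not an additive one, so a single step of the loop already loses a fixed factor larger than one. Rephrasing strict induction as a stopping time does not change this.

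There is a deeper reason the scheme cannot work: beyond the base case at $L\sim\epsilon$, your argument uses nothing about the ensemble. But the implication ``$\mathcal N(E;\epsilon)=O(N)$ $\Rightarrow$ $\mathcal N(E;L)=O(N\sqrt L)$ for all small $L$'' is false for a general Hermitian matrix (take all eigenvalues equal to $E$). Some model-specific input is mandatory. The paper supplies it through the Schur complement identities (Lemma~\ref{idm}): assuming $|m|\ge(\log N)|w|^{-1/2}$, one uses $[\mG^{(i,\emptyset)}_{ii}]^{-1}=-w(1+m_G^{(i,i)}+Z_i^{(i)})$ and the large-deviation bound $|Z_i^{(i)}|\ll|m|$ from Lemma~\ref{lem:bh} to show $|\mG^{(i,\emptyset)}_{ii}|^{-1}\gtrsim|w||m|$, then feeds this into the identity \eqref{110} for $G_{ii}$ to conclude $|G_{ii}|\le|w|^{-1/2}$ for every $i$, contradicting $|m|=|N^{-1}\sum G_{ii}|\ge(\log N)|w|^{-1/2}$. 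The self-consistent structure, not a counting bootstrap, is what makes the bound go through.
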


Using
\be\label{nkwS}
\left|\{j: \lambda_j\in [E-\eta, E+\eta]\}\right|\leq C \eta \im m(E+\ii\eta)
\ee
and Lemma \ref{a priori} with $E=0$ and $\eta =N^{-2+2\e}$, we obtain
\be\label{nkwT}
\frac{1}{N}\left|\{j: |\lambda_j|\leq N^{-2+2\e}\}\right|\prec N^{\e}.
 \ee
Moreover, we have  the following lower bound on the smallest eigenvalue.

\begin{lemma}\label{smallest}
Under the same assumptions as Theorem \ref{z1T},
$$
 | \log \lambda_1(z) |  \prec 1
$$
holds uniformly for $z$
in any fixed compact set.
\end{lemma}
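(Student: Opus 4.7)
The plan is to establish the two-sided polynomial bound $N^{-C} \le \lambda_1(z) \le C$ with $\zeta$-high probability for some constant $C$; since $\log N \prec 1$, this immediately gives $|\log \lambda_1(z)| \le C \log N \prec 1$, uniformly for $z$ in the fixed compact set.

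The upper bound is essentially free. Since $\lambda_1(z) \le \lambda_N(z) \le \|X - zI\|^2 \le (\|X\| + |z|)^2$, and the subexponential decay hypothesis \eqref{subexp} together with a standard $\varepsilon$-net / moment argument (or, alternatively, the strong local law \eqref{Lambdaofinal} applied at some $E$ just above $\lambda_+$, combined with Proposition \ref{prorhoc}(i)--(ii) which gives $\lambda_+=\OO(1)$) yields $\|X\| \le 3$ with $\zeta$-high probability, we obtain $\lambda_1 = \OO(1)$ and hence $\log \lambda_1 = \OO(1)$.

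The substantive part is the lower bound. I would invoke the quantitative invertibility estimates for random matrices with iid subexponential entries developed by Rudelson--Vershynin \cite{RudVer2008} and Tao--Vu \cite{TaoVu2008}: for any fixed deterministic shift $z \in \C$ with $|z| \le M$ and any $D > 0$, there is $B = B(D,M)$ such that
\begin{equation*}
\P\bigl(\sigma_{\min}(X - zI) \le N^{-B}\bigr) \le N^{-D}.
\end{equation*}
Squaring yields $\lambda_1(z) = \sigma_{\min}^2(X-zI) \ge N^{-2B}$ with the same probability, for each fixed $z$. To upgrade from pointwise control in $z$ to uniform control over the compact set $K$, I would take a $\delta$-net $\mathcal N \subset K$ with $\delta = N^{-B-1}$ (so $|\mathcal N|$ is polynomial in $N$), apply the elementary Lipschitz bound $|\sigma_{\min}(X - z_1) - \sigma_{\min}(X - z_2)| \le |z_1 - z_2|$, and union-bound the pointwise estimate over $\mathcal N$; the polynomial size of $\mathcal N$ is absorbed by enlarging $D$. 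Combining this with the upper bound completes the proof. The only real obstacle is bibliographic, namely selecting the form of the smallest-singular-value estimate in the literature that applies simultaneously to complex deterministic shifts and to subexponential rather than Gaussian tails; conceptually the argument is routine.
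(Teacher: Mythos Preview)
Your approach is essentially the paper's: both reduce the lemma to the Rudelson--Vershynin and Tao--Vu smallest-singular-value estimates, and the paper's proof is in fact nothing more than a citation of \cite{RudVer2008} and Theorem~2.1 of \cite{TaoVu2008}.

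One remark on your $\delta$-net step. In this paper ``uniformly for $z$ in a compact set'' means that the constants in the definition of $\prec$ may be chosen independently of $z$; it does not assert a simultaneous bound over all $z$ on a single high-probability event. Since the constants in the RV/TV estimates depend only on the moment/tail hypotheses and on a norm bound for the deterministic shift, this pointwise-with-uniform-constants statement is immediate, and no net is needed. This is just as well, because your net argument as written does not close: the RV bound gives roughly $\P(\sigma_{\min}(X-zI)\le N^{-B})\le CN^{1-B}$, so the exponent $B(D)$ must grow linearly in $D$, whence the net of mesh $N^{-B-1}$ has size $N^{2B(D)+O(1)}$ and the union bound produces $N^{2B(D)-D+O(1)}\to\infty$ rather than $N^{-D'}$. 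Enlarging $D$ enlarges the net just as fast.
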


\begin{proof}
This lemma follows from
\cite{RudVer2008} or  Theorem 2.1 of \cite{TaoVu2008}, which gives the required estimate uniformly in $z$. Note that
the  typical size of $\lambda_1$ is $N^{-2}$ \cite{RudVer2008}, and
we need a much weaker bound of type $\Prob(\lambda_1(z)\leq e^{-N^{-\e}})\leq N^{-C}$ for any $\e,C>0$.
This estimate is very  simple to prove if, for example, the entries of $X$ have a density bounded by $N^C$, which was discussed in Lemma 5.2 of \cite{BouYauYin2012Bulk}.
\end{proof}

The preceding lemma together with
(\ref{nkwT}) implies
\be\label{jatb2}
\tr \left( \log (Y^* Y ) -  \tilde\phi(Y^* Y ) \right) \prec  N^{C\e},
\ee
where $\tilde \phi(x)=h(N^{2-2\e} x)(\log x)$.
The proof of (\ref{jatb}) is then complete thanks to the following
easy lemma bounding the contribution of exceptionally large eigenvalues.

\begin{lemma}\label{largest}
Under the same assumptions as Theorem \ref{z1T}, for any $\e>0$,
\be
\tr \left( \phi(Y^* Y ) -  \tilde\phi(Y^* Y ) \right) \prec  1
\ee
holds uniformly for $z$
in any fixed compact set.
\end{lemma}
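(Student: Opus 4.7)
Note that
$$
(\phi - \tilde\phi)(x) \;=\; -h(N^{2-2\e}x)(\log x)\, h(x/(2\lambda_+))
$$
is supported in $\{x\geq 2\lambda_+\}$, and on this range $h(N^{2-2\e}x)=1$ since $\lambda_+$ is bounded below by a positive constant on any compact $z$-set. So I need only show that with very high probability there are $\OO_\prec(1)$ eigenvalues of $Y_z^*Y_z$ above $2\lambda_+$, each contributing a bounded amount to the trace.

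First I would establish a crude operator-norm bound: by standard concentration for matrices with subexponentially decaying entries (in the spirit of Lemma 5.2 of \cite{BouYauYin2012Bulk}), $\|Y_z\|^2\leq M^2 < 5\lambda_+/2$ with $\zeta$-high probability, where $M$ depends only on the compact set containing $|z|$; this constraint is easily satisfied because a short calculation gives $(2+|z|)^2/\lambda_+(z)\leq 4/3$ uniformly in $z$. Call this event $\mathcal{A}$. Next I would bound the number of eigenvalues in $[2\lambda_+, M^2]$ by Helffer--Sj\"ostrand. Fix a smooth $\chi:\R\to[0,1]$, supported in $[3\lambda_+/2,\,2M^2]$, equal to $1$ on $[2\lambda_+, M^2]$, with uniformly bounded derivatives; on $\mathcal{A}$ we have $|\{j:\lambda_j\geq 2\lambda_+\}|\leq\tr\chi(Y_z^*Y_z)$. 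Taking an almost-analytic extension $\tilde\chi$ supported in $\{|\im w|\leq 2\}$,
$$
\tr\chi(Y_z^*Y_z) - N\int\chi(x)\rho_{\rm c}(x,z)\,\rd x \;=\; \frac{N}{\pi}\int_\C \partial_{\bar w}\tilde\chi(w)\,(m(w)-m_{\rm c}(w))\,\rdA(w).
$$
On $\supp(\partial_{\bar w}\tilde\chi)$ we have $|w|\geq 3\lambda_+/4$ and $\re w\leq 2M^2\leq 5\lambda_+$, so Theorem \ref{sempl} applies for $|\im w|\geq \varphi^{C_\zeta}/N$ and yields $|m-m_{\rm c}|\leq\varphi^{C_\zeta}/(N|\im w|)$; the thin strip $|\im w|\leq \varphi^{C_\zeta}/N$ contributes negligibly via the trivial bound $|m-m_{\rm c}|\leq 2/|\im w|$ combined with the standard estimate $|\partial_{\bar w}\tilde\chi|\leq C|\im w|$. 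The routine Helffer--Sj\"ostrand computation then gives that the right-hand side is $\OO(\varphi^{C_\zeta})\prec 1$, while $N\int\chi\rho_{\rm c}=0$ since $\supp(\chi)\cap\supp(\rho_{\rm c})=\emptyset$ (the latter being contained in $[0,\lambda_+]$). Hence $|\{j:\lambda_j\geq 2\lambda_+\}|\prec 1$ on $\mathcal{A}$.

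To conclude: on $\mathcal{A}$, each contributing eigenvalue satisfies $|\log\lambda_j|\leq 2\log M$, so $|\tr(\phi-\tilde\phi)(Y^*Y)|\leq 2\log M\cdot|\{j:\lambda_j\geq 2\lambda_+\}|\prec 1$. On the exceptional event $\mathcal{A}^c$, a crude polynomial a priori bound $\|Y\|^2\leq N^{C}$ (valid with probability $1-N^{-D}$ for any $D$ via the subexponential decay of the entries) gives $|\tr(\phi-\tilde\phi)(Y^*Y)|\leq CN\log N$; since $\P(\mathcal{A}^c)\leq N^{-D}$ for any $D$, this contributes negligibly to the $\prec$ estimate. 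The main technical point is the Helffer--Sj\"ostrand bound in the preceding paragraph; this is routine given Theorem \ref{sempl} and follows the now-standard edge-rigidity machinery from the Erd\H{o}s--Yau--Yin program.
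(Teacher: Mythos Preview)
Your argument is correct, but it differs from the paper's and is in fact overcomplicated given your own computations.

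\medskip

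\textbf{Comparison with the paper.} The paper reduces to $\big|\{j:\lambda_j\ge 2\lambda_+\}\big|\prec 1$ as you do, but then proceeds by a perturbation: writing $Y^{(\delta)}=X-(1-\delta)zI$ so that $|(1-\delta)z|<1$, it invokes the rigidity Lemma~5.1 of \cite{BouYauYin2012Bulk} (valid in the bulk regime) to get $\lambda^{(\delta)}_{N-\varphi^{C_\zeta}}\le \lambda_+^{(\delta)}$, and then transfers this back via Weyl's inequality for singular values, $\lambda_k^{1/2}\le (\lambda_k^{(\delta)})^{1/2}+\delta|z|$. Your route is instead self-contained to the present paper: you bound $\|Y_z\|$ crudely by $\|X\|+|z|$ and then count eigenvalues above $2\lambda_+$ via Helffer--Sj\"ostrand and the local law Theorem~\ref{sempl}. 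Both work; the paper's argument borrows a ready-made rigidity statement, while yours rederives what is needed from Theorem~\ref{sempl}.

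\medskip

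\textbf{A simplification hidden in your own proof.} Your algebraic claim $(2+|z|)^2/\lambda_+(z)\le 4/3$ (with equality exactly at $|z|=1$) is correct, and since $4/3<2$ it already gives $M^2<2\lambda_+$, not merely $M^2<\tfrac{5}{2}\lambda_+$. Hence on your event $\mathcal A$ the interval $[2\lambda_+,M^2]$ is empty and there are \emph{no} eigenvalues above $2\lambda_+$; the entire Helffer--Sj\"ostrand step is vacuous. The proof then collapses to: $\|X\|\le 2+\e$ with $\zeta$-high probability (standard for i.i.d.\ matrices with subexponential entries), whence $\lambda_N\le (2+|z|+\e)^2<2\lambda_+$, so $\tr(\phi-\tilde\phi)(Y^*Y)=0$ on $\mathcal A$. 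This is shorter than both your write-up and the paper's argument.

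\medskip

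\textbf{Minor issue.} Your citation ``in the spirit of Lemma~5.2 of \cite{BouYauYin2012Bulk}'' is misplaced: that lemma concerns the \emph{smallest} singular value. The operator-norm bound $\|X\|\le 2+\e$ is standard but comes from elsewhere (moment method or Talagrand-type concentration for subexponential entries).
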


\begin{proof}
Notice first that, for any $c_0>0$, there is a constant $C>0$ such that, uniformly in $|z|<c_0$,
$\lambda_N\prec N^C$ (this is for example an elementary consequence of the inequality $\lambda_N\leq \tr(Y^*Y)$).
The proof of the lemma will therefore be complete if
\begin{equation}\label{eqn:sufficient}
\left|\{{ i}:\lambda_i\geq 2\lambda_+\}\right|\prec 1.
\end{equation}
Let $(\lambda^{(\delta)}_k)_{k\in\llbracket 1,N\rrbracket}$ be the increasing eigenvalues of
${Y^{(\delta)}}^*Y^{(\delta)}$, where $Y^{(\delta)}=X-(1-\delta)z$. By the
Weyl inequality for the singular values, for any $\delta>0$
\begin{equation}\label{eqn:Weyl}
\lambda_k^{1/2}\leq {\lambda^{(\delta)}_k}^{1/2}+ \delta.
\end{equation}
Moreover, as $|(1-\delta)z|<1$, we are in the context of Lemma 5.1 in \cite{BouYauYin2012Bulk}, which yields,
for any $\zeta>0$, the existence of $C_\zeta>0$ such that
\begin{equation}\label{eqn:conc}
\lambda^{(\delta)}_{N-\varphi^{C_\zeta}}\leq\lambda^{(\delta)}_+
\end{equation}
with $\zeta$-high probability. Equations (\ref{eqn:Weyl}) and (\ref{eqn:conc}) give
$$
\lambda_{N-\varphi^{C_\zeta}}\leq\lambda_+^{(\delta)}+2\delta{\lambda_+^{(\delta)}}^{1/2}+\delta^2.
$$
Together with $\lambda^{(\delta)}_+\to \lambda_+$ as $\delta\to 0$ (uniformly in $|z|<c_0$), this concludes the proof of
(\ref{eqn:sufficient}) by choosing $\delta$ small enough.
\end{proof}

\begin{remark}
With some more technical efforts, the techniques used in this paper allow to prove the stronger result
$
\lambda_N-\lambda_+\prec N^{-2/3}
$. This rigidity on the edge could be proved similarly to the rigidity of the
largest eigenvalues of $X^*X$, for non-square matrices \cite{PilYin2011}.
\end{remark}

It is known, by  Lemma 4.4 of \cite{Bai1997}, that
 \begin{align}\label{28jT}
\int_0^\infty(\log x)\Delta_z\rho_{\rm c}(x,z) \rd x =4 \chi_D (z).
\end{align}
By definition of $\phi$ and $\rho_c$
\be
\left|\int\phi(x)\rho_{\rm c}(x)\rd x -\int\log (x)\rho_{\rm c}(x)\rd x \right|\leq N^{-1+C\e}
\ee
Then from equations (\ref{id2}) and (\ref{jatb}),  Theorem \ref{z1}  follows if we can prove
estimate
\be\label{mjs}
\left| \int \Delta f(\xi) \left( \tr \phi(Y^* Y ) {  -N\int\phi(x)\rho_{\rm c}(x)\rd x}\right) \rd\xi \rd \bar \xi \right|\prec N^{ C\e}.
\ee
We first evaluate  $\tr \phi(Y^* Y )$ in the following lemma.

\begin{lemma}\label{lem:ben}
Let $ \chi$  be a smooth cutoff function  supported in $ [-1,1]$ with  bounded derivatives and $ \chi(y) = 1$ for $|y| \le 1/2$.
For the same $\e$ presented in the definition of $\phi_\e$, we define the domain in $\C$
\be\label{defI1I2}
I=\left\{ N^{-1+\e}\sqrt E\leq \eta, |w|\leq \e\right\}.
\ee
There is some $C>0$ such that for any $ \e>0$, {  (recall: $\phi:=\phi_\e$  from Def. \ref{defphiY}})
\be\label{yxl}
 \left|\tr \phi(Y^* Y)-N\int\phi(x)\rho_{\rm c}(x)\rd x- \frac{N}{\pi} \int_{I} \chi(\eta) \phi' (E)
  \re \left(m(w)-m_{\rm c}(w)\right)    \rd E\rd \eta \right|\prec N^{C\e  }
\ee
uniformly in $z$ in any compact set.
 \end{lemma}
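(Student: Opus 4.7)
The plan is to represent $\tr\phi(Y^*Y)-N\int\phi\rho_{\rm c}$ via the Helffer--Sj\"ostrand functional calculus as a two-dimensional integral against $m-m_{\rm c}$, reduce it to the claimed $\phi'\re(m-m_{\rm c})$ form by two integrations by parts, and then show that restricting the integration domain to $I$ only introduces an admissible error controlled by Theorem \ref{sempl} and Lemma \ref{a priori}. Let
$$
\wt\phi(E+\ii\eta)\deq \chi(\eta)\bigl(\phi(E)+\ii\eta\,\phi'(E)\bigr),
$$
an almost-analytic extension of $\phi$ supported in $|\eta|\le 1$. Applying the Helffer--Sj\"ostrand formula simultaneously to each eigenvalue of $Y^*Y$ and to the density $\rho_{\rm c}$ yields
$$
\tr\phi(Y^*Y)-N\int\phi(x)\rho_{\rm c}(x)\,\rd x \;=\; \frac{N}{\pi}\int_\C \partial_{\bar w}\wt\phi(w)\,\bigl(m(w)-m_{\rm c}(w)\bigr)\,\rd E\,\rd\eta,
$$
with the explicit decomposition
$$
\partial_{\bar w}\wt\phi \;=\; \tfrac{\ii}{2}\,\eta\chi(\eta)\phi''(E)\;+\;\tfrac{\ii}{2}\,\chi'(\eta)\phi(E)\;-\;\tfrac{1}{2}\,\eta\chi'(\eta)\phi'(E).
$$

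\paragraph{Reduction to $\phi'\re(m-m_{\rm c})$.}
The two terms involving $\chi'(\eta)$ are supported at $\eta\sim 1$, where $|m_{\rm c}|\sim 1$ and Theorem \ref{sempl} yields $|m-m_{\rm c}|\le\varphi^{C_\zeta}/N$; their contribution is therefore $O_\prec(1)$. For the remaining piece, take real parts to rewrite it as $-\frac{N}{2\pi}\int\eta\chi(\eta)\phi''(E)\im(m-m_{\rm c})\,\rd E\,\rd\eta$. An integration by parts in $E$ converts $\phi''$ into $\phi'$; the Cauchy--Riemann identity $\partial_E\im m = -\partial_\eta\re m$, valid for both $m$ and $m_{\rm c}$ on $\eta>0$, then permits a further integration by parts in $\eta$. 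The boundary contribution at $\eta=0$ vanishes thanks to the prefactor $\eta\chi(\eta)$, and the one produced at $\eta\sim 1$ is again of the $\chi'$-type handled above. The net result is
$$
\tr\phi(Y^*Y)-N\int\phi\rho_{\rm c}\,\rd x \;=\; \frac{N}{\pi}\int_{\R\times[0,1]} \chi(\eta)\phi'(E)\,\re\bigl(m(w)-m_{\rm c}(w)\bigr)\,\rd E\,\rd\eta\;+\;O_\prec(N^{C\e}).
$$

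\paragraph{Restriction to $I$ and the main obstacle.}
It remains to show that the contribution from $I^c$ (within the support of $\chi\phi'$) is itself $O_\prec(N^{C\e})$. On the subregion $\{|w|>\e\}$, Theorem \ref{sempl} applies and gives $|m-m_{\rm c}|\le \varphi^C/(N\eta)$ once $\eta\ge\varphi^C N^{-1}$; the double integral is then logarithmic in $N$ in both variables and absorbs the factor $N$ with room to spare. The delicate subregion is
$$
\bigl\{(E,\eta)\st |w|\le \e,\ \eta < N^{-1+\e}\sqrt{E}\bigr\},
$$
which lies below the scales where Theorem \ref{sempl} or Lemma \ref{a priori} bound $m-m_{\rm c}$ pointwise. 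I would handle it by exploiting the cancellation between $\re m$ and $\re m_{\rm c}$: using
$$
\re m(E+\ii\eta)=\frac{1}{N}\sum_j\frac{\lambda_j-E}{(\lambda_j-E)^2+\eta^2}=-\frac{1}{2N}\,\partial_E\sum_j\log\bigl((\lambda_j-E)^2+\eta^2\bigr)
$$
together with its analogue for $\re m_{\rm c}$ (expressed through $\rho_{\rm c}$), integrate by parts in $E$ against $\phi'$ once more. This trades the (potentially singular) real parts for differences of logarithmic quantities whose discrete versus continuous discrepancy is controlled by the eigenvalue counting bound \eqref{nkwT} near zero and by the lower bound on $\lambda_1$ from Lemma \ref{smallest}. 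Quantitatively matching the discrete log-sum to its continuum counterpart at scales below the local law is the main technical obstacle; every other region of $I^c$ is handled by a direct insertion of Theorem \ref{sempl}.
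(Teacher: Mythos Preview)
Your overall architecture---Helffer--Sj\"ostrand, dispose of the $\chi'(\eta)$ terms at $\eta\sim 1$, then two integrations by parts via $\partial_E\im\Delta m=-\partial_\eta\re\Delta m$---is the same as the paper's. The discrepancy is in the \emph{order} in which you handle the small-$\eta$ region, and that is where your argument currently has a gap.

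The paper does \emph{not} integrate by parts over the full half-plane. It first splits the $\eta\phi''(E)\chi(\eta)\im\Delta m$ integral into $\tilde I=\{\eta\ge N^{-1+\e}\sqrt{E}\}$ and its complement. On $\tilde I^c$ the paper uses the elementary but decisive fact that $\eta\mapsto \eta\,\im m(E+\ii\eta)$ is nondecreasing; hence $\eta\,\im m(E+\ii\eta)\le \eta_0\,\im m(E+\ii\eta_0)$ with $\eta_0=N^{-1+\e}\sqrt{E}$, and at $\eta_0$ Lemma~\ref{a priori} gives $|m|\le(\log N)|w|^{-1/2}$, so $\eta\,\im m\prec N^{-1}$ uniformly down to $\eta=0$ (same for $m_{\rm c}$). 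With $|\phi''(E)|\le C(1+|\log E|)E^{-2}$ this yields the $N^{C\e}$ bound on $\tilde I^c$ directly. Only \emph{then} is the double integration by parts performed, on $\tilde I$, producing boundary terms at $\eta=\eta_0(E)$ which are again bounded by Lemma~\ref{a priori}. Finally $\tilde I$ is cut down to $I$ using Theorem~\ref{sempl} on $|w|>\e$.

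Your route---IBP first on all of $\{\eta>0\}$, then excise $I^c$---forces you to bound $\int_{I^c}\chi\phi'(E)\re\Delta m$. The real part has no monotonicity in $\eta$, so the a priori Lemma~\ref{a priori} does not propagate below $\eta_0$. Your proposed third IBP (writing $\re m=-\frac{1}{2N}\partial_E\sum_j\log((\lambda_j-E)^2+\eta^2)$ and matching to the continuum) is not wrong in spirit, but it reintroduces exactly the kind of discrete-vs-continuous log comparison that the lemma is meant to package, and you do not carry it through. Note also that your treatment of $\{|w|>\e\}$ only invokes Theorem~\ref{sempl} for $\eta\ge\varphi^{C}/N$ and leaves the thin strip $0<\eta<\varphi^{C}/N$ unaddressed. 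The clean fix is the paper's: handle the small-$\eta$ strip \emph{before} the IBP, while the integrand still carries the factor $\eta\,\im\Delta m$, so that monotonicity can be used.
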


\begin{proof} Suppose  that \eqref{yxl} holds with the integration range $I$ replaced by  $\tilde I$  where
$$
\tilde I=\left\{ N^{-1+\e}\sqrt E\leq \eta\right\}.
$$
Define  the notation  $\Delta m=m-m_{\rm c}$.
One can easily check that if $w\in  \tilde I$
and $\phi'(E)\neq 0 $, then
$
\eta\geq N^{-1+\e}|w|^{1/2}$ (if $\eta\leq E$ this last equation exactly means $\omega\in\tilde I$ and if $\eta\geq E$ it is equivalent to $\eta\geq N^{-2+2\e}$, which is true because $\eta\geq N^{-1+\e}\sqrt E$ and $E\geq N^{-2+2\e}$ when $\phi'(E)\neq 0$).

Using Proposition \ref{tnf} and Theorem \ref{sempl}, we therefore have $\re \left(\Delta m(w)\right)=\OO(\varphi^{C_\zeta}/(N\eta))$ on $\tilde I\slash I$, so
$$
\int_{ \tilde I\slash I} \chi(\eta) \phi' (E)
  \re \left(\Delta m(w)\right)\rd E\rd\eta
\prec   \int_{  \e \le     |w|\le    C , N^{-1+\e}\sqrt{E}\leq\eta\leq C}\frac{\rd E\rd\eta}{NE\eta}\prec N^{-1}.
$$
Hence \eqref{yxl} holds with integration range given by $I$. \nc

We now prove \eqref{yxl} with $I$ replaced by $\tilde I$.
The Helffer-Sj\"ostrand functional calculus (see e.g. \cite{Dav1995}), gives for any smooth function $q: \R\to \R$,
$$
q(\lambda)=\frac{1}{2\pi}\int_{\R^2}\frac{\ii y q''(x)\chi(y)+\ii (q(x)+\ii y q'(x))\chi'(y)}{\lambda-x-\ii y}\rd x\rd y.
$$
As a consequence,
\begin{align}
\tr    \phi (Y^* Y) &=\frac N{\pi}\int_{\eta>0}\label{113th}
\Big( \ii \eta \phi ''(E)\chi (\eta)+ \ii \phi (E) \chi'(\eta)-\eta
\phi '(E)\chi'(\eta)\Big)m(E +\ii\eta)
  \dd E \dd \eta,\\
N\int\phi(x)\rho_{\rm c}(x)\rd x &= \frac N{\pi}\int_{\eta>0}\label{113thj}
\Big( \ii \eta \phi ''(E)\chi (\eta)+ \ii \phi (E) \chi'(\eta)-\eta
\phi '(E)\chi'(\eta)\Big)m_{\rm c}(E +\ii\eta)
  \dd E  \dd \eta.
\end{align}
By definition,   $\chi'(\eta)\neq 0$ only if  $|\eta|\in[1/2,1]$. Using  \eqref{res:sempl},  we have
$$
\frac N{\pi}   \int_{\eta>0}
\Big(   \ii \phi (E) \chi'(\eta)-\eta
\phi '(E)\chi'(\eta)\Big)\Delta m(E+\ii\eta)
  \dd E  \dd \eta  \prec 1.
$$
hence  the difference between \eqref{113th} and \eqref{113thj} is
$$
 \tr \phi(Y^* Y)-N\int\phi(x)\rho_{\rm c}(x)\rd x =  \frac N{\pi}\int_{\eta>0}
  \ii \eta \phi ''(E)\chi (\eta)  \Delta m(E+\ii\eta)
  \dd E \, \dd \eta +\OO_\prec(1).
$$
Since  the left  side of the equation,  $\chi$ and $\phi$ are all real,  this
can be rewritten as
$$
 \tr \phi(Y^* Y)-N\int\phi(x)\rho_{\rm c}(x)\rd x =  -\frac N{\pi}\int_{\eta>0}
  \eta \phi ''(E)\chi (\eta)  \im \Delta m(E+\ii\eta)
  \dd E \dd \eta +\OO_\prec(1).
$$

Furthermore, for  $\eta\leq   N^{-1+\e}\sqrt{E}$ and $E\geq N^{-2+2\e}$,
we have $
\eta \im m(E+\ii \eta)\prec N^{-1}
$, because
$\eta\mapsto\eta  \im m(E+\ii\eta)$ is increasing, and at
$\eta_0 = N^{-1+\e}\sqrt{E}$ the result its true thanks to  \eqref{52s}
($\eta\leq   N^{-1+\e}\sqrt{E}$ and $E\geq N^{-2+2\e}$ imply that $\eta>N^{-1+\e}|w|^{1/2}$ as we saw previously).
Together with the easy bound $|\phi''(E)|\leq C(1+|\log(E)|) E^{-2}$, the above estimate yields
\be\label{ald}
 N
\int_{\tilde I^c}  \eta \phi''(E) \chi(\eta)
\im m(w)\rd E\rd \eta\prec \int_{N^{-2}\leq E\leq \OO(1)} C N^{-1+C\e}E^{-3/2}\rd E\prec  N^{C\e}.
\ee
The same inequality holds when replacing  $m$ with $m_{\rm c}$, hence
$$
 N
\int_{\tilde I^c}  \eta \phi''(E) \chi(\eta)
\im (\Delta m(w))\rd E\rd \eta\prec  N^{C\e}.
$$
 To estimate
$
 N
\int_{\tilde I} \eta \phi''(E) \chi(\eta)
\im (\Delta m(w))\rd E\rd \eta,
$
we integrate this term by parts first in $E$, then in $\eta$
(and use the Cauchy-Riemann equation $\frac{\partial}{\partial E}\im (\Delta m)=-\frac{\partial}{\partial \eta}
\re(\Delta m)$) so that
 \begin{align}\label{q1}
 N
\int_{\tilde I} \eta \phi''(E) \chi(\eta)
\im (\Delta m(w))\rd E\rd \eta=
 & N \int   \eta\chi(\eta)
  \phi' (N^{2-2\e}\eta^2)\im (\Delta m(N^{2-2\e}\eta^2+\ii\eta))   \rd \eta  \non
 \\
&+ N  \int  N^{-1+\e}\sqrt E  \phi' (E)
\chi(  N^{-1+\e}\sqrt E)\re (\Delta m(E+\ii N^{-1+\e}\sqrt E))\rd E  \non
\\
 &- N
\int_{\tilde I}\chi(\eta)  \phi' (E)
\re(\Delta m(w))\rd E\rd \eta\non  \\
&- N
\int_{\tilde I}\eta\chi'(\eta)  \phi' (E)
\re(\Delta m(w))\rd E\rd \eta.
   \end{align}
The last term is nonzero only if $|\eta|\in[1/2,1]$.
 By  \eqref{res:sempl},
this term is of order  $\OO_\prec(1)$.

Concerning the first two terms, they can be bounded in the following way:
as we already saw, if $w\in  \tilde I$
and $\phi(E)\neq 0 $, then
$
\eta\geq N^{-1+\e}|w|^{1/2}
$;
one can therefore use (\ref{52s}):
$|m|+|m_c|\leq (\log N)|w|^{-1/2}$. This together with $|\phi'(E)|\leq C (1+|\log (E)|) E^{-1}$
proves that the first two terms are
$\OO_\prec( N^{C\e}) $,
completing the proof: only the third term remains.
\end{proof}

  To
complete the proof of Theorem \ref{z1}, we estimate the third term in \eqref{q1}
under the additional  assumption that  the  third moments of the matrix entries vanish.
The following lemma provides such an  estimate whose proof will be postponed to Section \ref{sec:wfgb}.
  To state this lemma, we introduce the following notation,
$$
Z_{X_1,X_2}^{(g)}=N
\int  \Delta g(\xi)  \int_{I}  \chi(\eta)\phi '(E)
\re (m_1(w)-m_2(w)) \rd E\rd \eta\rdA(\xi),$$
for any given smooth function $g$ and initial random matrices $X_1,X_2$,
with associated matrices ${Y^{(1)}_z}^* Y^{(1)}_z$, ${Y^{(2)}_z}^* Y^{(2)}_z$ having
respective Stieltjes transforms $m_1$ and $m_2$ (the parameter $z$ and $\xi$ are related by (\ref{eqn:zXi})).
Moreover, we will write $$Z_{X,{\rm c}}^{(g)}=N
\int  \Delta g(\xi)  \int_{I}  \chi(\eta)\phi '(E)
\re (m(w)-m_{\rm c}(w)) \rd E\rd \eta\rdA(\xi).$$
{Notice that $Z_{X_1,X_2}^{(g)}$ and $Z_{X,{\rm c}}^{(g)}$  depend on $\e$ through the definition of $\phi$ in Def. \ref{defphiY}.}

\begin{lemma}\label{wfgb} Under the {assumptions} of Theorem \ref{z1}, there exists a constant   $C>0$ such that
for any small enough $\e>0$, we  have
$$
Z_{X,{\rm c}}^{(f)}\prec N^{C\e}c_f,
$$
where $c_f$ is a constant depending only on the function $f$.
\end{lemma}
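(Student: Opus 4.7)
The strategy is to reduce the problem to the Ginibre ensemble, where explicit computations are available. Decompose
\[
Z_{X,{\rm c}}^{(f)} \;=\; Z_{X,X^{(G)}}^{(f)} \;+\; Z_{X^{(G)},{\rm c}}^{(f)},
\]
where $X^{(G)}$ is an $N\times N$ real Ginibre matrix (coupled to $X$, or on an independent probability space whose contribution is averaged away). It then suffices to bound each term stochastically by $N^{C\e}c_f$.

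For the Ginibre-versus-$m_{\rm c}$ piece $Z_{X^{(G)},{\rm c}}^{(f)}$, I would rely on the edge local circular law for the Ginibre ensemble proved in Section~4, which yields sharp estimates on $m^{(G)}(w)-m_{\rm c}(w)$ throughout the region $I$, in particular down to the scale $\eta \ge N^{-1+\e}\sqrt E$. Inserted into the integral defining $Z_{X^{(G)},{\rm c}}^{(f)}$, these estimates produce the desired bound after performing the $(E,\eta,\xi)$ integration.

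For the comparison piece $Z_{X,X^{(G)}}^{(f)}$, the plan is a Green function comparison argument following \cite{ErdYauYin2010PTRF}. Introduce a Lindeberg-type interpolation between $X$ and $X^{(G)}$ by replacing entries one at a time, and for each swap expand the change in the Stieltjes transform $m(w)$ to fourth order in the single entry via the resolvent identity
\[
(A+V)^{-1} \;=\; A^{-1} - A^{-1}VA^{-1} + A^{-1}VA^{-1}VA^{-1} - \cdots .
\]
Because the first three moments of the entries of $X$ and $X^{(G)}$ coincide (mean zero, variance $1/N$, and vanishing third moment by the hypothesis of Theorem~\ref{z1}), the contributions of orders $0,1,2,3$ cancel upon taking expectation over the entry being swapped. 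The remaining fourth-order error per swap has size $N^{-2}$ times a sum of products of four Green function matrix elements; summing over the $N^2$ swaps and integrating over $(E,\eta,\xi)\in I\times\supp(\Delta f)$ should then yield a contribution of the announced size.

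The main obstacle is that the region $I$ extends close to the spectral edge $|w|\to 0$, where the strong local law of Theorem~\ref{sempl} (valid only for $|w|>\e$) does not directly apply. In that regime the resolvent entries must be controlled through the a priori estimate $|m(w)|\le (\log N)|w|^{-1/2}$ of Lemma~\ref{a priori}, combined with the Ward identity $\sum_j |G_{ij}(w)|^2 = \eta^{-1}\im G_{ii}(w)$ to bound the required products of four Green function entries in an averaged sense. Moreover, since $Z_{X,X^{(G)}}^{(f)}$ involves $\re(m-m^{(G)})$ rather than the modulus, one may integrate by parts in $E$ using the Cauchy--Riemann relation $\partial_E \re m = -\partial_\eta \im m$ (exactly as was done in the proof of Lemma~\ref{lem:ben}) to trade the singular weight $\phi'(E)\lesssim (1+|\log E|)/E$ for boundary terms at the two extremes $\eta\in[1/2,1]$ and $\eta = N^{-1+\e}\sqrt E$, which are then controlled by the strong local law and by Lemma~\ref{a priori} respectively.
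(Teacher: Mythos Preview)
Your overall strategy---reduce to Ginibre via a Lindeberg replacement and invoke Section~4 for the Gaussian case---is the paper's. But there is a genuine gap in how you pass from the swap-by-swap comparison to the stochastic domination $Z_{X,{\rm c}}^{(f)}\prec N^{C\e}$.

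Your argument bounds the change in $\E\,m(w)$ (hence in $\E\,Z$) per swap by $\OO(N^{-2}\eta^{-1})$ after the third-moment cancellation, and sums over $N^2$ swaps. This controls $\bigl|\E Z_{X,{\rm c}}^{(f)}-\E Z_{X^{(G)},{\rm c}}^{(f)}\bigr|$, not the random variable itself. The third-moment cancellation only happens \emph{after} averaging over the swapped entry; the random increment $Z_{X_{k-1},{\rm c}}^{(f)}-Z_{X_k,{\rm c}}^{(f)}$ is typically of order $N^{-1/2}$ from the linear term, and summing $N^2$ such fluctuations pathwise is useless. The paper closes this by running the comparison on all even moments: one writes $Z_{X_{k-1},{\rm c}}^{(f)}=Z_{Q,{\rm c}}^{(f)}+\Delta_{k-1}$ with $\Delta_{k-1}$ a degree-three polynomial in the single swapped entry plus $\OO_\prec(N^{-2})$, expands $(Z_{X_{k-1},{\rm c}}^{(f)})^p-(Z_{X_k,{\rm c}}^{(f)})^p$ binomially, uses the moment matching on the $\Delta$'s, and shows recursively that $\E\bigl[(Z_{X_k,{\rm c}}^{(f)})^p\bigr]+N^\delta$ grows by at most a factor $(1+N^{-2})$ per swap. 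Markov's inequality then gives the $\prec$ statement. This moment iteration is the substantive step your plan is missing.

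Two smaller points. First, to control the fourth-order remainder in the swap expansion you need pointwise bounds on individual quantities such as $R_{bb}$, $(QR)_{ab}$, $(QR^2)_{ab}$, $(QR^2Q^*)_{aa}$ uniformly in the regime $|w|\ll 1$, $\eta\ge \varphi^{C}N^{-1}|w|^{1/2}$. Lemma~\ref{a priori} (a bound on the trace $m$ only) plus the Ward identity does not deliver these; the paper devotes separate lemmas (Lemmas~\ref{a priori2} and~\ref{5.1}) to them, and the perturbation of $Y^*Y$ by a single entry of $X$ is rank two, so the exact identity the paper derives (their formula for $\tr S-\tr R$) is more convenient than a raw Neumann series. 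Second, Section~4 does not produce Stieltjes-transform estimates on $m^{(G)}-m_{\rm c}$; it proves the local circular law for Ginibre directly via cumulant bounds, and the paper then runs the implication chain of Section~2 backwards through Lemma~\ref{lem:ben} to deduce $Z_{X^{(G)},{\rm c}}^{(f)}\prec N^{C\e}$. The extra integration by parts you propose at the end is unnecessary: it was already performed in Lemma~\ref{lem:ben}, and the quantity $Z_{X,{\rm c}}^{(f)}$ you are asked to bound is precisely the output of that step.
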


Combining this lemma with \eqref{yxl}, we obtain \eqref{mjs} and complete  the proof of Theorem \ref{z1}.

\medskip

To prove Theorem \ref{z1T}, instead of Lemma \ref{wfgb},  we only need to prove the following lemma which
does not assume the vanishing third moment condition.  This lemma will be proved at the end of Section 3.
This concludes the proof of Theorem \ref{z1T}.

\begin{lemma}\label{wfgbT} Under the assumptions  of Theorem \ref{z1T}, for some fixed $C>0$, for any small enough $\e>0$, we  have
$$
Z_{X,{\rm c}}^{(f)}\prec N^{1/2+C\e}c_f,
$$
where $c_f$ is a constant depending only on the function $f$.
\end{lemma}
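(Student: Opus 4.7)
The plan is to adapt the proof of Lemma \ref{wfgb}, replacing the exact Green function comparison step by a cruder estimate that accommodates the non-vanishing third moment. I begin with the decomposition
$$
Z_{X,\mathrm{c}}^{(f)} \;=\; Z_{X,X^{\mathrm G}}^{(f)} \;+\; Z_{X^{\mathrm G},\mathrm{c}}^{(f)},
$$
where $X^{\mathrm G}$ is a real Ginibre matrix with entries of variance $1/N$. The Ginibre term is controlled directly by the local circular law near the edge for the Ginibre ensemble developed in Section 4 of this paper, which yields $Z_{X^{\mathrm G},\mathrm{c}}^{(f)} \prec N^{C\e}c_f$, well within the stated error. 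The task therefore reduces to bounding the comparison term $Z_{X,X^{\mathrm G}}^{(f)}$ by $N^{1/2+C\e}c_f$.

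To handle $Z_{X,X^{\mathrm G}}^{(f)}$ I would apply a Lindeberg-type swap argument in the spirit of the Green function comparison theorem of \cite{ErdYauYin2010PTRF}: the $N^2$ entries of $X$ are replaced one at a time by Gaussians matching the first two moments. Writing $m(X;w)$ for the Stieltjes transform of $Y_z^{*}Y_z$, a single swap of the entry $X_{ij}$ by $\wt X_{ij}$ together with a fourth order Taylor expansion and moment matching gives
$$
\E\bigl[m(X;w)-m(\wt X;w)\bigr] \;=\; \tfrac{1}{6}\bigl(\E X_{ij}^{3}-\E\wt X_{ij}^{3}\bigr)\,\E\bigl[\partial_{ij}^{3} m(\wh X;w)\bigr] \;+\; R_{ij},
$$
where $\wh X$ is the matrix with the $(i,j)$-entry set to $0$ and $R_{ij}$ involves the fourth and fifth $(i,j)$-derivatives of $m$ together with the sub-exponential tail \eqref{subexp}. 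The leading term is of order $N^{-3/2}\bigl|\partial^{3}_{ij} m\bigr|$; summing over all $N^2$ entries of $X$ produces the overall prefactor $N^{1/2}$ appearing in the conclusion, while $R_{ij}$ will be lower order after the same bookkeeping.

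It remains to control $\partial^{3}_{ij} m$. By repeated application of the resolvent identity $\partial_{ij} G=-G\bigl(\partial_{ij}(Y_z^{*}Y_z)\bigr)G$, this quantity is a polynomial of bounded degree in a bounded number of entries of $G(\wh X;w)$ and of the companion Green function $\mG(\wh X;w)=(Y_z Y_z^{*}-w)^{-1}$, divided by $N$. On the region $I$ defined in \eqref{defI1I2}, Theorem \ref{sempl} (for $|w|$ not too small) together with Lemma \ref{a priori} (valid thanks to the built-in constraint $\eta\geq N^{-1+\e}\sqrt E$) shows that entries of $G$ and $\mG$ are stochastically dominated by $|w|^{-1/2}$. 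Inserting these bounds, using the Ward identity $\sum_k |G_{jk}|^{2}=\eta^{-1}\im G_{jj}$ to avoid a loss from naive summation, and integrating against $\phi'(E)\chi(\eta)$ over $I$ using $|\phi'(E)|\leq C(1+|\log E|)E^{-1}$ yields an integrated factor of at most $N^{C\e}$. Combining this with the $N^{1/2}$ from the number of swaps gives $Z_{X,X^{\mathrm G}}^{(f)}\prec N^{1/2+C\e}c_f$, which together with the Ginibre bound completes the proof.

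The main obstacle is the singular behavior of the integrand near the spectral edge $|w|=0$. A naive pointwise bound $|\partial^{3}_{ij} m|\prec |w|^{-2}$ would, after integration against $\phi'(E)$ over $I$, produce a bound strictly worse than $N^{1/2+C\e}$. It is crucial to combine the Ward identity with the lower cutoff $\eta\geq N^{-1+\e}\sqrt E$ and the apriori estimate of Lemma \ref{a priori} to replace such pointwise bounds by an averaged one, thereby reducing the cumulative integration loss to $N^{C\e}$ and preserving the announced rate.
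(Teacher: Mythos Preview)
Your approach is correct in spirit and shares the same Lindeberg swap skeleton as the paper, but the paper's execution is far more economical and you should be aware of the shortcut. The paper simply observes that in the proof of Lemma~\ref{wfgb} the third-moment hypothesis is first invoked at \eqref{eqn:rec2}; everything up to and including \eqref{eqn:rec} --- in particular the explicit expansion \eqref{pos571}, the bounds \eqref{eqn:bounds} drawn from Lemmas~\ref{a priori2} and~\ref{5.1}, and the fact that the integrated polynomial $\mathcal P_{f,Q}$ has coefficients $\OO_\prec(1)$ --- holds unchanged. One then rescales $Y=Z/\sqrt N$: since the cubic coefficient of $\mathcal P_{f,Q}$ is $\OO_\prec(1)$ and the third-moment mismatch is $\OO(N^{-3/2})$, the extra term is $\OO_\prec(N^{-2})$ after division by $\sqrt N$, and the $p$-th moment recursion \eqref{eqn:rec2T} goes through verbatim, yielding $Y^{(f)}_{X,\mathrm c}\prec N^{C\e}c_f$, i.e.\ $Z^{(f)}_{X,\mathrm c}\prec N^{1/2+C\e}c_f$.

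By contrast, your sketch re-derives the per-swap error from a Taylor expansion of $m$ and then appeals to Ward's identity plus Lemma~\ref{a priori} to control $\partial^3_{ij}m$. Two points deserve tightening. First, the bounds you actually need on the resolvent derivatives are precisely the estimates on $(YG)_{ij}$, $(YG^2)_{ij}$, $(YG^2Y^*)_{ij}$ of Lemma~\ref{5.1}, which in turn rest on Lemma~\ref{a priori2}; the Ward identity alone does not supply the crucial $|w|^{1/2}$ gains that keep the integrated coefficient at $\OO_\prec(\eta^{-1})$ rather than the naive $|w|^{-2}$ you flag as an obstacle. Second, your display only controls $\E\bigl[m(X;w)-m(\wt X;w)\bigr]$, which bounds $\E Z^{(f)}_{X,X^{\mathrm G}}$ but not $Z^{(f)}_{X,X^{\mathrm G}}$ in the $\prec$ sense; you still need the $p$-th moment iteration (the analogue of \eqref{eqn:rec2}--\eqref{36555}) to upgrade to stochastic domination. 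Both gaps are fillable, but once you fill them you have essentially reproduced the machinery of Lemma~\ref{wfgb}, at which point the paper's one-line rescaling is the cleaner route.
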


\section{Proof of Lemmas \ref{a priori} and \ref{wfgb}}

\subsection{Preliminary lemmas. } This subsection summarizes some elementary results
from \cite{BouYauYin2012Bulk}, based on large deviation estimates.
Note that all the inequalities in this subsection hold uniformly for bounded $z$,
no matter its distance to the unit circle.
We first introduce some notations.

\begin{definition} \label{definition of minor}
Let $\bb T, \bb U \subset \llbracket 1,N\rrbracket$. Then we define $Y^{(\bb T, \bb U)}$ as the $ (N-|\bb U|)\times (  N-|\bb T|)  $
matrix obtained by removing all columns of $Y$ indexed by $i \in \bb T$ and all rows  of $Y$ indexed by $i \in \bb U$. Notice that we keep the labels of
indices of $Y$ when defining $Y^{(\bb T, \bb U)}$.

Let $\by_i$ be the $i $-th column of $Y$ and  $\by^{(\bb S)}_i$ be the vector obtained by removing $\by_i (j) $ for
 all  $ j \in  \bb S$. Similarly we define $\mathrm y_i$ be the $i $-th row of $Y$.
Define
\begin{align*}
G^{(\bb T, \bb U)}=\Big [  (Y^{(\bb T, \bb U)})^* Y^{(\bb T, \bb U)}- w\Big]^{-1},\ \  & m_G^{(\bb T, \bb U)} =\frac{1}{N}\tr G^{(\bb T, \bb U)},
\\
\mG^{(\bb T, \bb U)}= \Big [ Y^{(\bb T, \bb U)}(Y^{(\bb T, \bb U)})^*- w \Big ]^{-1},\ \ & m_\mG^{(\bb T, \bb U)} =\frac1N\tr \mG^{(\bb T, \bb U)}.
\end{align*}
By definition,  $m^{(\emptyset, \emptyset)} = m$.
 Since the eigenvalues of $Y^* Y $ and $Y Y^*$ are the same except the zero eigenvalue, it is easy to check that
\be\label{35bd}
m_G^{(\bb T, \bb U)}(w) =m_\mG^{(\bb T, \bb U)} +\frac{|\bb U|-|\bb T|}{N  w}
\ee
For $|\bb U|=| \bb T|$, we define
\be\label{d trGmG}
m ^{(\bb T, \bb U)}:= m_G^{(\bb T, \bb U)} = m_\mG^{(\bb T, \bb U)}
\ee
\end{definition}

  \begin{lemma} [Relation between $G$, $G^{(\bb T,\emptyset)}$ and  $G^{( \emptyset, \bb T)}$]  \label{lem: GmG}
 For $i,j \neq k  $ ( $i = j$ is allowed) we have
\be\label{111}
 G_{ij}^{(k,\emptyset)}=G_{ij}-\frac{G_{ik}G_{kj}}{G_{kk}}
,\quad
\mG_{ij}^{(\emptyset,k)}=\mG_{ij}-\frac{\mG_{ik}\mG_{kj}}{\mG_{kk}},
\ee
\be\label{Gik}
 G^{ ( \emptyset,i)}
= G+\frac{(G  {\mathrm y} _i^*) \, ( {\mathrm y} _i  G)}
{1-  {\mathrm y} _i G    {\mathrm y} _i ^*}
,\quad
G
 =G^{ ( \emptyset,i)}-\frac{( G^{ ( \emptyset,i)} {\mathrm y} _i^*)  \,
 (  {\mathrm y} _i  G^{ ( \emptyset, i)})}
 {1+   {\mathrm y} _i  G^{ ( \emptyset,i)} {\mathrm y} _i ^* },
 \ee
and
$$
\mG^{ (i,\emptyset)}
=\mG+\frac{(\mG  \by_i) \, (\by_i^* \mG)}
{1-   \by_i^*   \mG     \by_i  }
,\quad
 \mG
 =\mG^{ (i,\emptyset)}-\frac{(\mG^{ (i,\emptyset)}  \by_i)  \,
 ( { \by_i}^ * \mG^{ (i,\emptyset)})}
 {1+  \by_i^*\mG^{ (i,\emptyset)}   \by_i  }.
$$

Furthermore, the following crude  bound on the difference between $m$ and $m_G^{(U, T)}$ holds:
for $\bb U,  \bb T\subset \llbracket 1,N\rrbracket$    we   have
\be\label{37mk}
|m-m^{(\bb U, \bb T)}_G|+|m-m^{(\bb U, \bb T)}_\mG| \leq   \frac{|\bb U|+|\bb T|}{N\eta}.\quad
 \ee
   \end{lemma}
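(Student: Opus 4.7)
The lemma is purely algebraic/spectral and decomposes into two independent tasks: (a) the resolvent identities relating $G$ to its minors obtained by removing either a column of $Y$ (giving $G^{(k,\emptyset)}$) or a row of $Y$ (giving $G^{(\emptyset,i)}$), and the mirror identities for $\mathcal G$; (b) the interlacing bound \eqref{37mk}. For (a), the key observation is to reinterpret each operation on $Y$ as a corresponding operation on the Hermitian matrix $Y^*Y$ (or $YY^*$), at which point standard linear algebra identities apply. For (b), the key observation is that both operations on $Y$ translate to a one-eigenvalue interlacing on the positive semidefinite matrix controlling the Stieltjes transform.

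\textbf{The minor identity.} Removing the $k$-th column from $Y$ turns $Y^*Y$ into its principal submatrix with row and column $k$ deleted, because $(Y^{(k,\emptyset)})^* Y^{(k,\emptyset)}$ agrees with $Y^*Y$ on every entry whose indices avoid $k$. Hence $G^{(k,\emptyset)}$ is the resolvent of a principal $(N-1)\times(N-1)$ submatrix of $Y^*Y-w$, and the identity $G^{(k,\emptyset)}_{ij} = G_{ij}-G_{ik}G_{kj}/G_{kk}$ for $i,j\neq k$ is the textbook Schur-complement formula obtained by writing $Y^*Y-w$ as a $2\times 2$ block matrix (the $k$-th row/column versus the rest) and inverting via block-matrix inversion. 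The identity for $\mathcal G^{(\emptyset,k)}_{ij}$ is the same statement applied to $YY^*$, since removing the $k$-th row of $Y$ deletes the $k$-th row and column of $YY^*$.

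\textbf{The rank-one identities.} Writing $\mathrm{y}_i$ for the $i$-th row of $Y$, one has
\[
(Y^{(\emptyset,i)})^* Y^{(\emptyset,i)} \;=\; Y^*Y - \mathrm{y}_i^*\mathrm{y}_i,
\]
so $G^{(\emptyset,i)}$ differs from $G$ by a rank-one perturbation of the resolvent argument. The Sherman--Morrison formula $(A-uv^*)^{-1}=A^{-1}+A^{-1}uv^*A^{-1}/(1-v^*A^{-1}u)$ applied with $A=Y^*Y-w$ and $u=v=\mathrm{y}_i^*$ yields the first claimed expression for $G^{(\emptyset,i)}$; the dual expression solving for $G$ in terms of $G^{(\emptyset,i)}$ is the same formula run in reverse (i.e.\ the sign flip in the denominator is precisely the $1/(1-v^*A^{-1}u)^{-1}=1+v^*(A-uv^*)^{-1}u$ relation). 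The $\mathcal G$-versions are identical: $Y^{(i,\emptyset)}(Y^{(i,\emptyset)})^*=YY^*-\mathbf{y}_i\mathbf{y}_i^*$ where now $\mathbf{y}_i$ is the $i$-th column, and Sherman--Morrison again applies.

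\textbf{The crude bound.} I claim each single removal changes $(1/N)\tr(\cdot -w)^{-1}$ by at most $O(1/(N\eta))$. For column removal, $(Y^{(k,\emptyset)})^* Y^{(k,\emptyset)}$ is a principal submatrix of $Y^*Y$, so Cauchy interlacing yields $\lambda_j(Y^*Y)\le \lambda_j((Y^{(k,\emptyset)})^*Y^{(k,\emptyset)})\le \lambda_{j+1}(Y^*Y)$. For row removal, the same argument applied to $YY^*$ interlaces the squared singular values, and $Y^*Y$ and $YY^*$ share the same nonzero spectrum. Either way, writing the difference of Stieltjes transforms as $\int (N_1-N_2)(\lambda)/(\lambda-w)^2\,\dd\lambda$ via integration by parts on the counting functions, and using $|N_1-N_2|\le 1$ pointwise together with $\int \dd\lambda/|\lambda-w|^2=\pi/\eta$, gives a $O(1/(N\eta))$ change per removal. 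Iterating $|\bb U|+|\bb T|$ times, and absorbing the absolute constant, yields \eqref{37mk}. The main (mild) subtlety here is that the paper normalizes every $m^{(\bb T,\bb U)}$ by $1/N$ rather than by the dimension of the minor, so one must consistently compare fixed-normalization traces; this is exactly what the counting-function argument above provides.
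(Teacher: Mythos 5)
Your treatment of the identities \eqref{111} and \eqref{Gik} is correct and is the intended one. The paper gives no proof here (the lemma is imported from the companion article \cite{BouYauYin2012Bulk}), and the argument there rests on exactly your two observations: deleting a column of $Y$ passes to a principal submatrix of $Y^*Y$, so the Schur-complement resolvent identity applies, while deleting a row is the rank-one perturbation $Y^*Y-{\mathrm y}_i^*{\mathrm y}_i$, handled by Sherman--Morrison; the $\mG$ statements follow with rows and columns exchanged.

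For \eqref{37mk} your interlacing-plus-counting-function argument is sound in substance but does not deliver the inequality as literally written: $\int \rd\lambda/|\lambda-w|^2=\pi/\eta$, so each removal costs $\pi/(N\eta)$, and there is no constant in \eqref{37mk} into which to absorb the $\pi$. The standard route in this literature gets constant $1$ per removal: by Cramer's rule $\det(B-w)/\det(A-w)=G_{kk}$ when $B$ is the principal minor of $A$, hence $\tr(A-w)^{-1}-\tr(B-w)^{-1}=\partial_w\log G_{kk}=(G^2)_{kk}/G_{kk}$, and the spectral representation gives $|(G^2)_{kk}|\le \im G_{kk}/\eta\le |G_{kk}|/\eta$; for the rank-one case, $\det(A-uu^*-w)=\det(A-w)\,(1-u^*Gu)$ yields a trace difference equal to $u^*G^2u/(1-u^*Gu)$, which is at most $1/\eta$ because $|u^*G^2u|\le \im(u^*Gu)/\eta\le |1-u^*Gu|/\eta$. (Even so, the left side of \eqref{37mk} is a \emph{sum} of two such differences, so the stated bound really holds only up to a factor $2$; no application in the paper is sensitive to either constant, so this is a blemish rather than a gap.) A second small point: for a row removal you compare $m_G$ by passing through $YY^*$ and the shared nonzero spectrum, which forces you to track the extra zero eigenvalue created by the dimension mismatch (this is precisely the content of \eqref{35bd}); it is cleaner to treat $(Y^{(\emptyset,i)})^*Y^{(\emptyset,i)}=Y^*Y-{\mathrm y}_i^*{\mathrm y}_i$ directly as a rank-one perturbation of the $N\times N$ matrix $Y^*Y$, to which interlacing, or the determinant identity above, applies with no bookkeeping.
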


 \begin{definition}\label{Zi-def}
 In the following, $\E_X$ means the integration with respect to the random variable $X$.
For any $\bb T\subset \llbracket 1,N\rrbracket$, we introduce the   notations
$$
Z^{(\bb T)}_{i }:=(1-\E_{{\mathrm y}_i})
 {\mathrm y}^{(\bb T)}_i  G^{(\bb T, i)} {\mathrm y}_i^{(\bb T)*}
$$
and
$$
\cal Z^{(\bb T)}_{i }:=(1-\E_{\by_i})
\by_i^{(\bb T) *} \mG^{(i, \bb T)} \by_i^{(\bb T)}.
$$
 Recall by our convention that
$\by_i$ is a $N\times 1 $ column vector and $\mathrm y_i$ is a $1\times N $ row vector.
For simplicity we will write
$$
Z _{i }
=Z^ {(\emptyset)}_{i}, \quad \cal Z _{i }
=\cal Z^ {(\emptyset)}_{i}.
$$
\end{definition}

 \begin{lemma}  [Identities for  $G$, $\mG$, $Z$ and  $\cal Z$]   \label{idm}
 For any $ \T\subset \llbracket 1,N\rrbracket$, we have
\begin{align}\label{110}
 G^{(\emptyset , \bb T)} _{ii}
 & =   - w^{-1}\left[1+  m_\mG^{(i, \bb T)}+   |z|^2 \mG_{ii}^{(i, \bb T)} +\cal Z^{(\bb T)}_{i } \right]^{-1},
\end{align}
\be\label{110b}
 {G_{ij} ^{(\emptyset , \bb T) } }
  =   -wG_{ii}^{(\emptyset , \bb T) } G^{(i,\bb T)}_{jj}
\left(   \by_i^{(\bb T)*}  \mG^{(ij, \bb  T)}  \by_j^{(\bb T)}\right) , \quad i\neq j,
\ee
where, by definition,  $\mG_{ii}^{(i,\bb T)}=0$ if $i\in \bb T$.  Similar results hold for $\mG$:
\be\label{110c}
\left[\mG^{(\bb T, \emptyset)} _{ii} \right]^{-1}
  =   - w\left[1+  m_ G^{(\bb  T,i)}+   |z|^2  G_{ii}^{(\bb T,i)} + Z^{(\bb T)}_{i } \right]
\ee
\be\label{110d}
 {\mG_{ij}^{(\bb T, \emptyset)}}
  =   -w\mG_{ii}^{(\bb T, \emptyset)}\mG^{(\bb T, i)}_{jj}\left( \mathrm y_i^{(\bb T)} G^{( \bb T,ij)}   \mathrm y_j^{(\bb T)*}\right), \quad i\neq j.
\ee
\end{lemma}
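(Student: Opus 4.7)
The plan is to derive each of the four identities as a consequence of the block Schur-complement inversion formula, combined with the elementary operator identity $A(A^*A - w)^{-1} A^* = I + w(AA^*-w)^{-1}$ (and its symmetric partner $A^*(AA^*-w)^{-1}A = I + w(A^*A-w)^{-1}$), applied to the rectangular matrix $A = Y^{(i,\bb T)}$. Throughout I would exploit the basic structural fact that the $(j,k)$ entry of $(Y^{(\emptyset,\bb T)})^*Y^{(\emptyset,\bb T)}$ equals $\by_j^{(\bb T)*}\by_k^{(\bb T)}$, with the symmetric statement holding for $YY^*$ and rows.

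For \eqref{110}, I would first isolate the $i$-th row and column of $M \deq (Y^{(\emptyset,\bb T)})^*Y^{(\emptyset,\bb T)} - w$ via the Schur formula
$$
G^{(\emptyset,\bb T)}_{ii} = \frac{1}{M_{ii} - \sum_{k,l\neq i} M_{ik}\, G^{(i,\bb T)}_{kl}\, M_{li}}.
$$
The double sum rewrites as the quadratic form $\by_i^{(\bb T)*} Y^{(i,\bb T)} G^{(i,\bb T)} (Y^{(i,\bb T)})^* \by_i^{(\bb T)}$; the operator identity above transforms it into $\by_i^{(\bb T)*}\by_i^{(\bb T)}+w\,\by_i^{(\bb T)*}\mG^{(i,\bb T)}\by_i^{(\bb T)}$, whereupon the $\by_i^{(\bb T)*}\by_i^{(\bb T)}$ pieces cancel against $M_{ii}$, giving $G^{(\emptyset,\bb T)}_{ii}=-\bigl[w\bigl(1+\by_i^{(\bb T)*}\mG^{(i,\bb T)}\by_i^{(\bb T)}\bigr)\bigr]^{-1}$. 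Splitting $\by_i = \bx_i - z\,e_i$ (where $e_i$ is the $i$-th standard basis vector) and taking partial expectation in $\bx_i$ yields $m_\mG^{(i,\bb T)}+|z|^2 \mG^{(i,\bb T)}_{ii}$ for the deterministic part (using the independence, zero mean and variance $1/N$ of the entries of $\bx_i^{(\bb T)}$ and the independence of $\mG^{(i,\bb T)}$ from $\by_i$), with the centered remainder being precisely $\cal Z_i^{(\bb T)}$.

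For \eqref{110b}, the same Schur block partition now gives the off-diagonal entry as $G^{(\emptyset,\bb T)}_{ij}=-G^{(\emptyset,\bb T)}_{ii}\sum_{k\neq i}M_{ik}\, G^{(i,\bb T)}_{kj}$, and the identity $Y^{(i,\bb T)} G^{(i,\bb T)}=\mG^{(i,\bb T)} Y^{(i,\bb T)}$ identifies the sum with $\by_i^{(\bb T)*}\mG^{(i,\bb T)}\by_j^{(\bb T)}$. To pass from $\mG^{(i,\bb T)}$ to $\mG^{(ij,\bb T)}$ I would use the rank-one decomposition $Y^{(i,\bb T)}(Y^{(i,\bb T)})^* = Y^{(ij,\bb T)}(Y^{(ij,\bb T)})^* + \by_j^{(\bb T)}\by_j^{(\bb T)*}$ and Sherman--Morrison, which produces $\mG^{(i,\bb T)}\by_j^{(\bb T)} = \mG^{(ij,\bb T)}\by_j^{(\bb T)}/\bigl(1+\by_j^{(\bb T)*}\mG^{(ij,\bb T)}\by_j^{(\bb T)}\bigr)$; the denominator is exactly $-\bigl(wG^{(i,\bb T)}_{jj}\bigr)^{-1}$ by applying the deterministic form of \eqref{110} to $G^{(i,\bb T)}$ in place of $G$. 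Combining the two identifications delivers \eqref{110b}.

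The two identities \eqref{110c}--\eqref{110d} for $\mG$ are the exact mirror of the above, obtained by replacing $(Y^{(\emptyset,\bb T)})^*Y^{(\emptyset,\bb T)}-w$ by $Y^{(\bb T,\emptyset)}(Y^{(\bb T,\emptyset)})^*-w$ and running the same argument with the roles of columns and rows (i.e.\ of $\by$ and $\mathrm y$, of $G$ and $\mG$) systematically interchanged. The lemma is purely algebraic and no step presents a real obstacle; the only point that requires vigilance is the bookkeeping of minors, namely that removing the $i$-th column of $Y$ yields the $(i,i)$-minor of $Y^*Y$ while removing the $i$-th row of $Y$ yields the $(i,i)$-minor of $YY^*$, so that the nested minors $(\emptyset,\bb T)$, $(i,\bb T)$ and $(ij,\bb T)$ appearing in the four formulas consistently track the correct Schur complements.
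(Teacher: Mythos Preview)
Your approach is correct and is precisely the standard one: Schur-complement (block) inversion for the diagonal and off-diagonal entries, combined with the operator identity $A(A^*A-w)^{-1}A^*=I+w(AA^*-w)^{-1}$ and the Sherman--Morrison formula to peel off one column (or row) at a time. The paper itself does not prove this lemma; it is listed among the ``preliminary lemmas'' that are quoted verbatim from the companion article \cite{BouYauYin2012Bulk}, so there is nothing to compare against here beyond noting that your derivation is exactly the kind of purely algebraic computation the authors have in mind.
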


\begin{lemma}[Large deviation estimate]\label{lem:bh} For any $\zeta>0$, there exists $Q_\zeta>0$ such that  for $\bb T\subset\llbracket 1,N\rrbracket$, $|\bb T| \leq N/2$ the following estimates hold  \hp{\zeta}:
\begin{align}\label{130}
| Z^{(\bb T)}_{i }|=
\left|(1-\E_{ \mathrm y_i})  \left(\mathrm y_i^{(\bb T)} G^{(\bb T,i )}    \mathrm y_i^{(\bb T)*}\right)  \right|
 \leq  \varphi^{  Q_\zeta/2} \sqrt{\frac{\im m_ G^{(\bb T,i )}+ |z|^2 \im G^{(\bb T,i )}_{ii} }{N\eta}}, \\
|\cal Z^{(\bb T)}_{i }| =
 \left|(1-\E_{ \by_i})   \left(\by_i^{(\bb T)*}  \mG^{(i , \bb T )}  \by_i^{(\bb T)} \right) \right|
\leq  \varphi^{  Q_\zeta/2} \sqrt{\frac{\im m_\mG^{(i,\bb T)}+ |z|^2 \im\mG^{(i, \bb T)}_{ii} }{N\eta}}. \non
 \end{align}
 Furthermore, for $i\neq j$, we have
\begin{align}\label{132}
\left|
(1-\E_{\mathrm y_i\mathrm y_j})
\left( \mathrm y_i^{(\bb T)} G^{(\bb T,ij)}   \mathrm y_j^{(\bb T)*}\right)
 \right|
& \leq
  \varphi^{  Q_\zeta/2}
  \sqrt{\frac{\im m_ G^{(\bb T,ij)}
  +|z|^2\im G^{(\bb T,ij)}_{ii}+|z|^2 \im G^{(\bb T,ij)}_{jj}}{N\eta}},
  \\
  \left|
(1-\E_{\by_i\by_j} )
\left(\by_i^{(\bb T)*}   \mG^{(ij,\bb T)}  \by_j^{(\bb T)}\right)
 \right|
& \leq
  \varphi^{  Q_\zeta/2}
  \sqrt{\frac{\im m_\mG^{(ij,\bb T)}
  +|z|^2\im\mG^{(ij,\bb T)}_{ii}+|z|^2 \im\mG^{(ij,\bb T)}_{jj}}{N\eta}}, \label{1321}
 \end{align}
 where
 \be\label{1328}
 \E_{\mathrm y_i\mathrm y_j} \left( \mathrm y_i^{(\bb T)} G^{(\bb T,ij)}   \mathrm y_j^{(\bb T)*}\right)
= |z|^2G^{(\bb T,ij)}_{ij}, \quad
\E_{\by_i\by_j}\left(\by_i^{(\bb T)*}   \mG^{(ij,\bb T)}  \by_j^{(\bb T)}\right)= |z|^2\mG^{(ij,\bb T)}_{ij}.
 \ee
 \end{lemma}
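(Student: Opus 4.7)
The four estimates \eqref{130}--\eqref{1321} together with the mean identities \eqref{1328} are standard large-deviation bounds for quadratic and bilinear forms in independent subexponential random variables, combined with the Ward identity. The key structural input I would record first is an independence observation: by the definitions, $G^{(\bb T,i)}$ depends on $Y$ only through the entries $Y_{kl}$ with $k \neq i$ and $l \notin \bb T$, whereas $\mathrm y_i^{(\bb T)} = (Y_{ij})_{j \notin \bb T}$ is a function of the $i$-th row of $Y$ alone; hence $\mathrm y_i^{(\bb T)}$ and $G^{(\bb T,i)}$ are independent. The analogous assertions hold for $(\by_i^{(\bb T)}, \mG^{(i,\bb T)})$ and for the pairs appearing in the bilinear bounds.

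Next I would decompose $\mathrm y_i^{(\bb T)} = \bx_i - z\mathrm e_i$, where $\bx_i = (X_{ij})_{j \notin \bb T}$ has i.i.d.\ centered entries of variance $1/N$ and $\mathrm e_i$ is the indicator of position $i$ restricted to $\bb T^c$ (vanishing when $i \in \bb T$). Expanding $\mathrm y_i^{(\bb T)} G^{(\bb T,i)} \mathrm y_i^{(\bb T)*}$ and taking $\E_{\mathrm y_i}$ (with $G^{(\bb T,i)}$ held fixed by independence) gives the mean $m_G^{(\bb T,i)} + |z|^2 G^{(\bb T,i)}_{ii}$. The same expansion for $\mathrm y_i^{(\bb T)} G^{(\bb T,ij)} \mathrm y_j^{(\bb T)*}$ with $i \neq j$ has all cross terms of mean zero except the pure $(-z\mathrm e_i)(-\bar z\mathrm e_j)$ piece, yielding $|z|^2 G^{(\bb T,ij)}_{ij}$ and verifying \eqref{1328}.

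For the fluctuation bounds I would invoke the standard subexponential large-deviation lemma (as stated in \cite{BouYauYin2012Bulk} and its references): for a deterministic matrix $A$ and independent centered subexponential entries $(x_k)$ of variance $1/N$,
\[
\biggl| \sum_{k,l} x_k A_{kl} x_l - \frac{1}{N}\tr A \biggr| \prec \frac{1}{N}\sqrt{\tr(AA^*)}, \qquad \biggl|\sum_k x_k v_k\biggr| \prec \frac{1}{\sqrt N}\Bigl(\sum_k |v_k|^2\Bigr)^{1/2}
\]
for deterministic $v$, together with its bilinear analogue when two independent vectors $(x_k),(y_l)$ are paired through $A$. Applied to $A = G^{(\bb T,i)}$, the Ward identities $N^{-1}\tr(GG^*) = \im m_G/\eta$ and $\sum_l |G_{il}|^2 = \im G_{ii}/\eta$ convert the centered quadratic part into a term of order $\sqrt{\im m_G^{(\bb T,i)}/(N\eta)}$, while the linear cross term $z\mathrm e_i G^{(\bb T,i)}\bx_i^*$ contributes $|z|\sqrt{\im G^{(\bb T,i)}_{ii}/(N\eta)}$; their sum matches the right-hand side of \eqref{130}. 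The estimates for $\cal Z_i^{(\bb T)}$ and the bilinear bounds \eqref{132}--\eqref{1321} follow by the same decomposition-and-LD scheme after decomposing both $\mathrm y_i$ and $\mathrm y_j$. The only real obstacle is careful bookkeeping of indices across the various minors; no new ideas beyond the companion paper's toolkit are required.
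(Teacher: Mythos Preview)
Your proposal is correct and follows exactly the standard argument. The paper itself does not prove this lemma: it is stated in the ``Preliminary lemmas'' subsection, which explicitly ``summarizes some elementary results from \cite{BouYauYin2012Bulk}, based on large deviation estimates,'' so the paper's proof is simply a citation to the companion article, where the decomposition $\mathrm y_i = \bx_i - z\mathrm e_i$, the subexponential large-deviation bounds for quadratic/bilinear forms, and the Ward identity are combined precisely as you outline.
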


\subsection{Proof of Lemma \ref{a priori}. }\label{sec:upperBound}
 By
 \eqref{130} and \eqref{37mk}, \hp{\zeta}, we have for some large constant $C$
\begin{align}
|Z_i^{(i)}|
 &  \leq
 \varphi^{C_\zeta} \sqrt{\frac{\im m^{(i, i)} }{N\eta}}
  \leq
  \varphi^{C_\zeta}\frac{1}{N\eta}+ \varphi^{C_\zeta}\sqrt{\frac{ | \im m| }{N\eta}}  \nonumber \\ & \le  \varphi^{3C_\zeta}\frac{1}{N\eta}+\frac {| \im m|} { (\log N)^C},
  \label{r2}
\end{align}
where we have used the Cauchy-Schwarz inequality in the last step.
Moreover, choosing
$\bT = \{i\}$ in  \eqref{110c}, we have
\be\label{110c1}
\left[\mG^{(i, \emptyset)} _{ii} \right]^{-1}
  =   - w\left(1+  m_ G^{( i ,i)}+Z^{(i)}_{i }   \right).
\ee
If
\be\label{e}
\varphi^{3C}\frac{1}{N\eta} \ll \frac {|m|} { (\log N)^C } \quad {\rm and}\quad   \quad |m| \ge 3,
\ee
then by   \eqref{110c1}  and \eqref{37mk}
we have
\be\label{a1}
\left | \mG^{(i, \emptyset)} _{ii}   \right |^{-1}
  \ge   |w|\left |1+  m_ G^{( i ,i)}+Z^{(i)}_{i }   \right| \ge  \frac  2  3 |w|\left |  m - \frac {|m|} { (\log N)^C }   \right|
  \ge  \frac 1 2  |w| |  m  |
\ee
Similarly,  \hp{\zeta}, we can bound $\cal Z_i$ by
\be  \label{r1}
|\cal Z_{i }|
\leq   \varphi^{C_\zeta} \sqrt{\frac{\im m_\mG^{(i, \emptyset)}+ |z|^2 \im\mG^{(i, \emptyset)}_{ii} }{N\eta}}
\leq  \varphi^{C_\zeta}\frac{1}{N\eta} +  \frac {  \im m + |z|^2   \im\mG^{(i, \emptyset)}_{ii} } {(\log N)^C}
\ee
If \eqref{52s} is violated, i.e., $  |m|\geq (\log N)|w|^{-1/2}$, then  \eqref{e} is implied by the assumption \eqref{eta}. Hence
\eqref{a1} holds. By  \eqref{110},
$$
|w G_{ii}|^{-1} =  |1+  m_\mG^{(i, \emptyset)}+   |z|^2 \mG_{ii}^{(i,\emptyset)} +  \cal Z_{i } | \ge  | m|/2  - C   - |w|^{-1/2}
$$
and we conclude that  $ |G_{ii}|\leq |w|^{-1/2}$  \hp{\zeta}, for any $1\leq i\leq N$.
This  contradicts the assumption $  |m|\geq (\log N)|w|^{-1/2}$  and  we have thus proved \eqref{52s}.

\subsection{Proof of Lemma \ref{wfgb}. }  \label{sec:wfgb}

We begin with the following estimates on the elements of the Green function, weaker than
those in Theorem \ref{sempl} but valid for any $w$ and $z$.

\begin{lemma}\label{a priori2}
Suppose  $ |w|+ |z| \leq C. $
   For any $\zeta>0$, there exists $C_\zeta$  such that
if the assumption \eqref{eta} holds
then the following estimates hold:
 \be\label{53s}
\max_i|G_{ii}|\leq 2(\log N) |w|^{-1/2},
\ee
\be\label{53.5s}
\max_i   |w | |G_{ii}||\mG^{(i,\emptyset )}_{ii}|\leq  (\log N)^4, \ee
 \be\label{54s}
 \max_{ij}|G_{ij}|\leq C(\log N)^2 |w|^{-1/2},
\ee
\hp{\zeta}. Furthermore, for  eigenvectors $u_\al$ of $Y^* Y$ are delocalized:
\hp{\zeta}
\be\label{52a}
{ \max_\alpha \|u_\al \|^2_\infty} \leq   \varphi^{C_\zeta} N^{-1}.
\ee
\end{lemma}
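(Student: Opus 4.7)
The plan is to deduce all four bounds from the averaged estimate of Lemma~\ref{a priori}, applied entrywise via the Schur-complement identities of Lemma~\ref{idm} and the large-deviation estimates of Lemma~\ref{lem:bh}. Structurally the argument mirrors the bootstrap used in the proof of Lemma~\ref{a priori}, but now at the level of individual diagonal entries $G_{ii}$ rather than on the averaged Stieltjes transform $m$.

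I would first establish the diagonal bound \eqref{53s}. Starting from identity \eqref{110} with $\bb T=\emptyset$,
\[
G_{ii}^{-1} \;=\; -w\bigl(1 + m_{\mG}^{(i,\emptyset)} + |z|^2 \mG_{ii}^{(i,\emptyset)} + \cal Z_i\bigr),
\]
the goal is to lower-bound the bracket by $(2\log N)^{-1}|w|^{-1/2}$ with $\zeta$-high probability. From \eqref{35bd}, \eqref{37mk} and $|m|\leq (\log N)|w|^{-1/2}$ (Lemma~\ref{a priori}), one obtains $|m_{\mG}^{(i,\emptyset)}| \leq (\log N)|w|^{-1/2} + O((N|w|)^{-1}+(N\eta)^{-1})$. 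The large-deviation estimate \eqref{130}, followed by a Cauchy--Schwarz split exactly as in \eqref{r2} and \eqref{r1}, gives
\[
|\cal Z_i| \;\leq\; \varphi^{3C_\zeta}(N\eta)^{-1} + (\log N)^{-C}\bigl(\im m + |z|^2 \im \mG_{ii}^{(i,\emptyset)}\bigr),
\]
and under the hypothesis $\eta \geq \varphi^{C_\zeta}N^{-1}|w|^{1/2}$ with $C_\zeta$ large enough, the first summand is much smaller than $(\log N)^{-1}|w|^{-1/2}$. A bootstrap/contradiction argument identical in structure to \eqref{e}--\eqref{a1}---assume $|G_{ii}|>2(\log N)|w|^{-1/2}$ and use the identity to derive a contradiction with the estimates on $m_{\mG}^{(i,\emptyset)}$ and $\cal Z_i$---then gives \eqref{53s}. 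The nonnegativity of $\im m_{\mG}^{(i,\emptyset)}$ and $\im \mG_{ii}^{(i,\emptyset)}$, both being Stieltjes transforms of positive-semidefinite matrices, is what prevents the bracket from collapsing.

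The remaining bounds follow from the same machinery. For the product bound \eqref{53.5s}, apply \eqref{110c} with $\bb T=\{i\}$ and use $G_{ii}^{(i,i)}=0$ to write $[\mG_{ii}^{(i,\emptyset)}]^{-1}=-w(1+m^{(i,i)}+Z_i^{(i)})$; multiplying with \eqref{110} and bounding each bracket from above by $C\log N$ via analogous estimates yields \eqref{53.5s}. For the off-diagonal bound \eqref{54s}, use \eqref{110b} and the bilinear large-deviation estimates \eqref{132}, \eqref{1321}, \eqref{1328}, applying \eqref{53s} to both $G_{ii}$ and $G_{jj}^{(i,\emptyset)}$ (the latter by the same argument on the principal minor, whose hypotheses are inherited from the original matrix). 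Finally, delocalization \eqref{52a} follows from the spectral decomposition: for any eigenvalue $\lambda_\alpha$ of $Y^*Y$, setting $w=\lambda_\alpha+\ii\eta_\ast$ with $\eta_\ast=\varphi^{C_\zeta}N^{-1}|w|^{1/2}$ yields
\[
|u_\alpha(i)|^2 \;\leq\; \eta_\ast\,\im G_{ii}(w) \;\leq\; 2\eta_\ast(\log N)|w|^{-1/2}\;\leq\;\varphi^{2C_\zeta}/N.
\]

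The principal obstacle is the lower bound on the bracket in \eqref{110}: near the spectral edge $|w|$ can be as small as $N^{-2+2\e}$, so $|m|$ can be of order $N^{1-\e}$, and the naive triangle inequality $|1+m+\cdots|\geq 1-|m|$ is useless. The definite-sign structure $\im(\mathrm{bracket})\geq 0$ is the single ingredient that rescues the argument; once exploited, the remaining work is careful book-keeping of the $\varphi^{C_\zeta}$ factors through the Cauchy--Schwarz splittings so that the bootstrap closes on the admissible range $\eta \geq \varphi^{C_\zeta}N^{-1}|w|^{1/2}$.
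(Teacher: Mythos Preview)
Your outline has the right ingredients---Lemmas~\ref{idm}, \ref{lem:bh}, and the averaged bound from Lemma~\ref{a priori}---but the argument you describe for \eqref{53s} does not close. The difficulty is that the contradiction scheme of \eqref{e}--\eqref{a1} worked for $m$ because the final step \emph{averaged} the individual bounds $|G_{ii}|\le|w|^{-1/2}$ to contradict the assumed largeness of $|m|$. For a single diagonal entry there is no averaging, and the positivity $\im(m_{\mG}^{(i,\emptyset)}+|z|^{2}\mG_{ii}^{(i,\emptyset)})\ge 0$ by itself gives no useful lower bound on the bracket: both imaginary parts can be simultaneously small (think of $E$ near the right spectral edge), while the real parts can cancel. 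What the paper actually does is a dichotomy on the size of $|m|$. When $|m|\le\e^{1/4}|w|^{-1/2}$, the formula $[\mG_{ii}^{(i,\emptyset)}]^{-1}=-w(1+m^{(i,i)}+Z_{i}^{(i)})$ forces $|\mG_{ii}^{(i,\emptyset)}|\gtrsim\e^{-1/4}|w|^{-1/2}$, and \emph{then} the imaginary-part trick absorbs $|\cal Z_{i}|$ into the now-large bracket. In the complementary regime $\e^{1/4}|w|^{-1/2}\le|m|\le(\log N)|w|^{-1/2}$, the paper compares $G_{ii}^{-1}-G_{jj}^{-1}$, showing $|G_{ii}-m|\le|G_{ii}G_{jj}|\cdot(\log N)^{-C}|w|^{1/2}$; this yields the quadratic self-improving inequality $\gamma\le C\log N+\gamma^{2}/(\log N)^{C}$ for $\gamma=\max_{i}|G_{ii}||w|^{1/2}$, closed by continuity in $\eta$. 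Your proposal omits this comparison-and-continuity step, which is the heart of the matter in the non-small-$|m|$ case.

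Two smaller points. For \eqref{53.5s}, ``bounding each bracket from above'' goes the wrong way: you need a lower bound on the product of brackets. The paper instead rewrites $(wG_{ii})^{-1}+1+m_{\mG}^{(i,\emptyset)}=-|z|^{2}\mG_{ii}^{(i,\emptyset)}-\cal Z_{i}$ and multiplies through by $wG_{ii}$, reducing to quantities already controlled by \eqref{53s} and Lemma~\ref{a priori}. For \eqref{54s}, using \eqref{110b} would require off-diagonal and diagonal control of $\mG^{(ij,\emptyset)}$, which you do not yet have; the paper avoids this by a spectral/dyadic argument directly from \eqref{53s}. Your treatment of \eqref{52a} is correct and matches the paper.
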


\begin{proof}
 From
 equations (6.37) and (6.38) of \cite{BouYauYin2012Bulk}, the following estimate holds\footnote{Although these bounds in \cite{BouYauYin2012Bulk} are stated under the assumption $||z|-1|\geq \tau$, the  same argument holds also for $z$ close to the unit circle, under the extra assumption $|w|>\e$.}:
\begin{equation}\label{110a1}
 \mG_{ii}^{(i,\emptyset)}
= - \frac 1 { w( 1+  m  )}  +  \cal E_1
\end{equation}
where
\begin{align}\label{457}
 \cal E _1 &=    w^{-1}\frac 1 { (1+  m)^2  }\Big [  m_ G^{( i ,i)}- m + Z^{(i)}_{i }    \Big ]  +  \OO\left ( \frac {|Z^{(i)}_{i }|^2 + \frac 1 { (N \eta)^2} } { |w| |1+  m|^3  } \right )  = \OO(\varphi^{ Q_\zeta/2} \Psi)\\
 \Psi&= \left(\sqrt{\frac{\im m_{\rm c}+|m-m_{\rm c}|}{N\eta}}+\frac{1}{N\eta}\right).\notag
\end{align}
By using \eqref{res:sempl}-\eqref{Lambdaofinal}, \eqref{110a1}, \eqref{457} and \eqref{dA2}, for $|w|\ge \e$,  one can be easily prove:  $|G_{ij}|+|\mG^{(i,\emptyset )}_{ii}|\leq |w|^{-1/2}$,  which implies  \eqref{53s}-\eqref{54s}.  Moreover,  if  $|z |\le1/2 $,  \eqref{res:sempl}-\eqref{Lambdaofinal} still hold without the restriction $|w|\le \e$ (see Theorem 3.4 in \cite{BouYauYin2012Bulk}). It implies \eqref{53s}-\eqref{54s}  in the case $|z|\leq 1/2$.   From now on, we therefore assume that for some small enough $\e>0$,
$$|w|\leq  \e,\quad |z|\geq 1/2.$$

From \eqref{110},  \eqref{37mk}  and \eqref{110c}, we  have
\be\nonumber
\left|G^{-1}_{ii}-G^{-1}_{jj}\right| \le
    \frac {C |w|} { N \eta} +        W_{ij} +   w |\cal Z_{i }| + w |\cal Z_j| ,
\ee
where

\be\label{50}
     W_{ij}= |z|^2  \left(
   (1+  m_ G^{( i ,i)}+Z^{(i)}_{i } )^{-1}- (1+  m_ G^{( j , j)}+Z^{(j)}_{j })^{-1}  \right).
\ee
 If $\e^{ 1/4}|w|^{-1/2}\leq   |m| \leq (\log N) |w|^{-1/2}$
then by  \eqref{r2} we have  $ |Z^{(i)}_{i } | \leq (\log N)^{-C} |  m|$ due to  the restriction of $\eta$ in \eqref{eta}. Thus  we can estimate  $W_{ij}$
by
\begin{align}\label{51}
|W_{ij} |  & \le  |z|^2  \left [  |Z^{(i)}_{i }| +  |Z^{(j)}_{j }|  +  (N \eta)^{-1} \right ]
   (1+m^{(i, i)}  +  Z^{(i)}_{i })^{-1} (1+m^{(j, j)}  +Z^{(j)}_{j })^{-1}   \\
&  \le  C |z|^2    | 1+  m|^{-2}   \left [  |Z^{(i)}_{i } | +  |Z^{(j)}_{j }|  +  (N \eta)^{-1} \right ]  \le (\log N)^{-  C}  |w|^{1/2}
\end{align}

Under the assumption $|w|$ is small enough and $\e^{1/4}|w|^{-1/2}\leq   |m| \leq (\log N) |w|^{-1/2}$,
the condition \eqref{e} is satisfied and, from \eqref{r1}, we have
\be  \label{r11}
|\cal Z_{i }|
\leq  \left [   \frac { 1} { N \eta} +  \frac { 1 } { (\log N)^C}  \right ] |w|^{-1/2}
\ee

Thus we have in this case  the estimate
\be\label{u1}
\left|G_{ii}-m \right|  \le N^{-1} \sum_j \left|G_{ii}-G_{jj}\right|\leq |G_{ii} G_{jj}|\left|G^{-1}_{ii}-G^{-1}_{jj}\right|  \le     |G_{ii} G_{jj}|   \left [   \frac { 1} { N \eta} +  \frac { 1 } { (\log N)^C}  \right ] |w|^{ 1/2}
\ee
Define the parameter
$$
\gamma (z, w) = \max_i |G_{ii}|  |w|^{1/2}
$$
Then \eqref{u1} and the assumption  $\e^{1/4}|w|^{-1/2}\leq   |m| \leq (\log N) |w|^{-1/2}$ imply that
$$
\gamma (z, w)  \le  C \log N + \frac { \gamma(z, w)^2  } { (\log N)^C}
$$
By continuity (in $\eta$)  method,  we have proved  $\gamma (z, w)  \le  C \log N$  (i.e., \eqref{53s}) assuming that  $\e^{1/4}|w|^{-1/2}\leq   |m| \leq (\log N) |w|^{-1/2}$   holds. Since $ |m|\leq (\log N) |w|^{-1/2}$ holds \hp{\zeta} by \eqref{52s},
to prove  \eqref{53s},  we only have to consider the last case $|m| \le \e^{1/4}|w|^{-1/2}$.

By \eqref{110c1} and \eqref{r2}, we have
$$
|\mG^{(i,\emptyset)}_{ii} | \ge |w|^{-1}  |1+m^{(i, i)}  + Z_{i}^{(i)}|^{-1} \ge \e^{-1/4} |w|^{-1/2}/2
 $$
Thus
$$
|m_\mG^{(i, \emptyset)}+   |z|^2 \mG_{ii}^{(i,\emptyset)} | \ge \e^{-1/4} |w|^{-1/2}/2 - \e^{ 1/4}|w|^{-1/2} - \frac 1 { N \eta}
\ge \e^{-1/4}|w|^{-1/2}/4
$$

By \eqref{r1} and the notation   $m_\mG^{(i, \emptyset)}+   |z|^2 \mG_{ii}^{(i,\emptyset)} = A_i + \ii B_i$, $A_i, B_i\in \R$, we have, for $\eta$ satisfies \eqref{eta}, that
 \begin{align} \label{58st}
| \cal  Z_{i }|
 \le   (\log N)^{-1}| B_i | +  \frac { (\log N)^{C} } { N \eta}
 \end{align}
  From \eqref{110},  we   have
\be\label{110-1}
\frac1{ |G_{ii}|  }
\ge      |w| \left  [| 1+  A_i + \ii B_i| - | \cal Z_{i }|  \right ]  \ge  |w| \left [  \frac { | 1+  A_i + \ii B_i|   } 2  -   (\log N)^{-C}    \right ]
\ge \e^{-1/4} |w|^{1/2}/8
\ee
We have thus proved  \eqref{53s}.

We now prove  \eqref{53.5s}.
From \eqref{110}, we have
\be\label{88}
 (wG _{ii})^{-1} + 1+  m_\mG^{(i, \emptyset )}
  =   -   [ |z|^2 \mG_{ii}^{(i, \emptyset)} +\cal Z_{i } ]
\ee

By the triangle inequality,  we have
\be\label{625}
| w z^2 G_{ii}    \mG_{ii}^{(i,\emptyset)}   |   \le   |w  G_{ii}   [|z|^2  \mG_{ii}^{(i,\emptyset)} +  \cal Z_i ]  | + | w  G_{ii} \cal Z_i|
\ee
Together with  \eqref{88} and using \eqref{r1} to bound  $ \cal Z_i$, with \eqref{eta}  we have
\begin{align}\label{626x}
| w z^2 G_{ii}    \mG_{ii}^{(i,\emptyset)}   |
 & \le
     | w  G_{ii}   [1+  m_\mG^{(i, \emptyset)} ] |  +  1
     +\varphi^{C_\zeta}|wG_{ii}| \sqrt{\frac{\im m_\mG^{(i, \emptyset)}+ |z|^2 \im\mG^{(i, \emptyset)}_{ii} }{N\eta}}
 \\\nonumber
 &  \le
   | w  G_{ii}   [1+  m_\mG^{(i, \emptyset)} ] |  +  1 +\frac {|m| | w  G_{ii}|} {(\log N)^C}   +\varphi^{-1}\sqrt{| w z^2 G_{ii}    \mG_{ii}^{(i,\emptyset)}   | }
 \end{align}
Then with  \eqref{52s} and \eqref{53s} on $m$ and $G_{ii}$, we obtain \eqref{53.5s}.

For any    eigenfunction with eigenvalues $ |\lambda_\al - E|   \le \eta$, we have
$$
 \frac{\eta |u_\al(i)|^2 }{(\lambda_\al-E)^2+\eta^2}  \le    |\im G_{ii}|\leq |G_{ii}|
$$
and thus
$$
 |u_\al(i)|^2    \le  2 \eta  |G_{ii}| \le \varphi^{C_\zeta} N^{-1},
$$
where we used (\ref{53s}) for $\eta=\varphi^{C_\zeta}N^{-1}|w|^{1/2}$.
Since this bound holds for all energy $E$, we have proved \eqref{52a}
(notice that the proof of \eqref{52a} for $w\geq \e$
follows by the same argument).

We now  prove \eqref{54s}. By the Cauchy-Schwarz inequality, we have
   \be\label{629}
   |G_{ij}|\leq \sum_\al\frac {|\lambda_\al-E| |u_\al(i)|^2 }{(\lambda_\al-E)^2+\eta^2}
   +
   \sum_\al\frac{|\lambda_\al-E||u_\al(j)|^2 }{(\lambda_\al-E)^2+\eta^2}  +\im G_{ii}+\im G_{jj}
   \ee
The imaginary parts, $\im G_{ii}+\im G_{jj}$,  can be bounded by \eqref{53s}.  The first term on the right hand side
is bounded by
\be\label{629a}
 \sum_\al\frac{|\lambda_\al-E||u_\al(i)|^2 }{(\lambda_\al-E)^2+\eta^2}\leq
  2\sum_{\al: \lambda_\al \geq E}
  \frac{(\lambda_\al-E)|u_\al(i)|^2 }{(\lambda_\al-E)^2+\eta^2}+|\re G_{ii}|
\ee
The real part, $\re G_{ii}$ is bounded again by \eqref{53s}.
Let $E_k=E+(2^k-1)\eta$ and $\eta_k=2^{k-1}\eta$. From \eqref{53s},  we have
\be \label{629b}
  \sum_{\al: \lambda_\al \geq E}
  \frac{(\lambda_\al-E)|u_\al(i)|^2 }{(\lambda_\al-E)^2+\eta^2}
  \leq\sum_{k=0}^{C\log N} \sum_{\al: E_k\leq  \lambda_\al \leq E_{k+1}}
  \frac{C\eta_k|u_\al(i)|^2 }{(\lambda_\al-E_k)^2+\eta_k^2}\leq C\log N|w|^{-1/2}
\ee
This concludes the proof of \eqref{54s} and Lemma \ref{a priori2}.
\end{proof}

\begin{lemma} \label{5.1} Suppose that $|w|+|z|=\OO(1)$.
For any $\zeta>0$, there exists $C_\zeta$  such that if \eqref{eta} holds then
we have
\be\label{55s}
  \left|\left[Y  G (z)\right]_{k l } \right |
 +\left|\left[ G  Y^*(z)\right]_{k l } \right |\leq  C(\log N)^2
\ee
and
\be\label{56s}
 \left|\left[Y  G  Y^*(z)\right]_{k l }-\delta _{kl}\right |
  \le C(\log N)^2|w|^{1/2}
\ee
and   \be\label{57s}
 \max_{ij}\left|[G^2]_{ij}\right|\leq  \max_{ij}\left|  |G|^2_{ij}\right| \le  \frac{C(\log N)^2}{\eta}  |w|^{-1/2}
\ee
and
\be\label{58ss}
 \max_{ij}\left|[YG^2]_{ij}\right|\leq \frac{C(\log N)^2}{\eta}
\ee
and
\be\label{59s}
\max_{ij}\left|[YG^2Y^*]_{ij}\right|\leq \frac{C(\log N)^2}{\eta} |w|^{1/2}
\ee
\hp{\zeta}.

 \end{lemma}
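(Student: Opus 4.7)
The plan is to derive all five estimates from the basic resolvent identities
\begin{equation*}
YG=\mG Y,\qquad GY^*=Y^*\mG,\qquad YGY^*=I+w\mG,\qquad YG^2Y^*=\mG+w\mG^2
\end{equation*}
combined with Cauchy-Schwarz in the spectral decomposition and the a priori bounds of Lemma \ref{a priori2}, which apply equally to $\mG$ by the symmetry between $Y^*Y$ and $YY^*$. I would verify the four identities above by expanding $(YY^*-w)(YGY^*)=YY^*$ and $(YG)(GY^*)=\mG YY^*\mG=\mG(\mG^{-1}+w)\mG=\mG+w\mG^2$, etc.

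The bounds \eqref{56s}, \eqref{57s}, \eqref{58ss}, and \eqref{59s} are then relatively direct. For \eqref{56s} the identity $YGY^*-I=w\mG$ and \eqref{54s} applied to $\mG$ give $|[YGY^*]_{kl}-\delta_{kl}|=|w||\mG_{kl}|$. For \eqref{57s}, spectral Cauchy-Schwarz yields $|[G^2]_{ij}|\leq\eta^{-1}\sqrt{\im G_{ii}\,\im G_{jj}}$, combined with \eqref{53s}. For \eqref{58ss} an analogous splitting gives $|[YG^2]_{kl}|^2\leq[Y|G|^2Y^*]_{kk}\cdot\eta^{-1}\im G_{ll}$, and the Ward-type identity $[Y|G|^2Y^*]_{kk}=\eta^{-1}\im[YGY^*]_{kk}=\eta^{-1}\im(w\mG_{kk})$ is controlled by \eqref{53s}. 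Finally, \eqref{59s} follows from the fourth identity combined with \eqref{54s} and \eqref{57s} applied to $\mG$.

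The challenging estimate is \eqref{55s}, because a straightforward spectral or operator-norm argument yields bounds scaling as $|w|^{-1/2}$, incompatible with the $|w|$-independent bound $C(\log N)^2$. The plan is to exploit the Woodbury rank-one update associated with $YY^*=Y^{(l,\emptyset)}(Y^{(l,\emptyset)})^*+\by_l\by_l^*$, which gives
\begin{equation*}
[YG]_{kl}=[\mG\by_l]_k=\frac{[\mG^{(l,\emptyset)}\by_l]_k}{1+\by_l^*\mG^{(l,\emptyset)}\by_l},
\end{equation*}
and symmetrically for $[GY^*]_{kl}$ via $Y^*Y=(Y^{(\emptyset,l)})^*Y^{(\emptyset,l)}+\mathrm{y}_l^*\mathrm{y}_l$. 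By \eqref{110} and \eqref{110c} the denominators equal $-[wG_{ll}]^{-1}$ and $-[w\mG_{ll}]^{-1}$ respectively, so
\begin{equation*}
[YG]_{kl}=-wG_{ll}\,[\mG^{(l,\emptyset)}\by_l]_k,\qquad [GY^*]_{kl}=-w\mG_{ll}\,[G^{(\emptyset,l)}\mathrm{y}_l^*]_k.
\end{equation*}
The factor $|wG_{ll}|\leq C(\log N)|w|^{1/2}$ from \eqref{53s} provides the crucial $|w|^{1/2}$ gain.

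To estimate $[\mG^{(l,\emptyset)}\by_l]_k=-z\mG^{(l,\emptyset)}_{kl}+\sum_{j}\mG^{(l,\emptyset)}_{kj}X_{jl}$, I would use that $\mG^{(l,\emptyset)}$ is independent of $\by_l$. The mean term is bounded by $|z|\cdot C(\log N)^2|w|^{-1/2}$ via \eqref{54s} applied to the minor. The fluctuation, a linear form in the independent entries of $\by_l$, is controlled by the large deviation bound \eqref{130} of Lemma \ref{lem:bh}, giving $\varphi^{Q_\zeta/2}(\im\mG^{(l,\emptyset)}_{kk}/(N\eta))^{1/2}$; applying \eqref{53s} and the hypothesis $N\eta\geq\varphi^{C_\zeta}|w|^{1/2}$ shows this is also $O((\log N)|w|^{-1/2})$. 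Multiplying by $|wG_{ll}|\leq C(\log N)|w|^{1/2}$ gives the desired bound (up to $\log N$ factors absorbable in $C_\zeta$). The main obstacle is therefore recognizing and implementing the Woodbury factorization that exposes the $|w|^{-1/2}$ singularity of $\mG^{(l,\emptyset)}\by_l$ as being exactly cancelled by the $|w|^{1/2}$ factor coming from $wG_{ll}$; the delocalization estimate \eqref{52a}, which is needed to apply Lemma \ref{lem:bh} sharply, is the other nontrivial input.
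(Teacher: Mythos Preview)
Your approach is essentially the paper's: the same algebraic identities $YGY^*=I+w\mG$, $YG^2Y^*=\mG+w\mG^2$, the same spectral Cauchy--Schwarz for \eqref{57s}, and the same Woodbury rank-one factorization $[YG]_{kl}=-wG_{ll}[\mG^{(l,\emptyset)}\by_l]_k$ as the heart of \eqref{55s}.

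Two small points where you diverge or slip. First, for the deterministic part $-wzG_{ll}\mG^{(l,\emptyset)}_{kl}$ in \eqref{55s} you simply invoke \eqref{54s} for the minor $\mG^{(l,\emptyset)}$; the paper instead expands $\mG^{(l,\emptyset)}_{kl}$ via \eqref{110d} and closes using two applications of the product bound \eqref{53.5s} together with a second large-deviation estimate. Your shortcut is fine once one remarks that Lemma~\ref{a priori2} carries over to rank-one minors (the self-consistent argument is unchanged up to $O(1/N)$), but strictly speaking that is not stated, which is presumably why the paper takes the longer route. Second, the fluctuation $\sum_j\mG^{(l,\emptyset)}_{kj}X_{jl}$ is a \emph{linear} form, so the bound you need is the simpler linear large-deviation inequality (as in the paper's \eqref{642}), not the quadratic-form estimate \eqref{130}; and the delocalization bound \eqref{52a} plays no role here---the variance is controlled directly by the Ward identity $\sum_j|\mG^{(l,\emptyset)}_{kj}|^2=\eta^{-1}\im\mG^{(l,\emptyset)}_{kk}$.
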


\begin{proof}
We first prove \eqref{55s} and \eqref{56s}.  From the general identity
$$
A (A^*  A - w)^{-1} A^*=1+ w ( A  A^*   - w)^{-1},
$$
we have
$$
(Y G Y^*)_{ij} = \delta_{ij} + w \mG_{ij}
$$
This proves \eqref{56s} with \eqref{53s}.   To prove \eqref{55s}, we notice first the identity
$$
Y G = \mG Y, \quad G Y^* = Y^* \mG.
$$
Recall the rank one perturbation formula
$$
(A +   \bv^* \bv)^{-1} = A^{-1} - \frac {  (A^{-1}  \bv^*)  (\bv A^{-1})} { 1 +   \bv A^{-1}   \bv^*}
$$ and Lemma \ref{lem: GmG}.  We have
$$
  \mG \by_i
= \frac { \mG^{ (i,\emptyset)} \by_i } { 1+ \langle \by_i ,\mG^{ (i,\emptyset)}  \by_i \rangle}.
$$
Together with the equation
$$
\left[G  _{ii} \right]^{-1}=-w-w\,\by_i^\dagger \mG^{(i, \emptyset )} \by_i
$$
(see Equation (6.18) in \cite{BouYauYin2012Bulk} for a derivation) and the definition of $\by_i$ as the $i$-th column of $Y $,  we have
\be \label{641}
 ( \mG  \by_i)_j
  =-wG_{ii} (\mG^{ (i,\emptyset)}\by_i )_j=-wzG_{ii}\mG_{ji}^{ (i,\emptyset)}    -  w  G_{ii} \sum_k \mG^{ (i,\emptyset)}_{jk} X_{ki}
  \ee
 From the large deviation lemma, we can bound
\be\label{642}
\left | \sum_k \mG^{ (i,\emptyset)}_{jk} X_{ki} \right |
 \le
\varphi^C \sqrt {N^{-1}  \sum_k |\mG^{ (i,\emptyset)}_{jk} |^2 }
 \le
 \varphi^C \sqrt{\frac{\im \mG_{ii}^{(i,\emptyset)}}{N\eta}}
  \le
  \im \mG_{ii}^{(i,\emptyset)}  + \frac {\varphi^C} { N \eta}
\ee
with high probability.
Hence
\be \label{643}
| ( \mG  Y )_{  ji} | = | ( \mG  \by_i)_j|
 \le \mathds{1}_{i\neq j}  | wzG_{ii}\mG_{ji}^{ (i,\emptyset)}|    + C (\log N)^4
  \ee
where we have used \eqref{53s},  \eqref{53.5s} and \eqref{eta}.  Now we estimate $\mG_{ji}^{ (i,\emptyset)}$, with \eqref{110}, \eqref{132}, \eqref{1328}, we have

\be\label{644}
\left| w G_{ii}\mG_{ji}^{ (i,\emptyset)}\right|
\leq
\varphi^C
\left|w^2 G_{ii}\mG_{ii}^{ (i,\emptyset)}\mG_{jj}^{ (i,i)}\right|
\sqrt{\frac{\im m_G^{(i,ij)}+|z|^2\im G_{jj}^{(i,ij)}}{N\eta}}
\ee
Then using $w  G_{ii}\mG_{ii}^{ (i,\emptyset)}\leq (\log N)^4$
 and
 $w\mG_{jj}^{ (i,i)}  G_{jj}^{(i,ij)}\leq (\log N)^4$ (see \eqref{53.5s}), we obtain it is less than $(\log N)^2$.  This proves \eqref{55s}.

By the Cauchy-Schwarz inequality, we have
$$
 |G|^2_{ij}\leq \sqrt{\sum_k|G_{ik}|^2}\sqrt{\sum_k|G_{jk}|^2}\leq \sqrt{\frac{\im G_{ii}}{\eta}}\sqrt{\frac{\im G_{jj}}{\eta}},
$$
 which implies \eqref{57s}.
Using $|w|\geq \im w =\eta$,   \eqref{54s} and \eqref{57s},  we have
$$
| (YG^2Y)_{ij} | = |\mG_{ij} + w \mG^2_{ij}|  \le  C |w|^{-1/2} + C |w|^{-1/2}\eta^{-1}  \le  C  |w|^{-1/2}\eta^{-1}
$$
This proves \eqref{59s}.   Notice that we claimed that all  estimates proved for $G$
are valid for $\mG$ as well. This is because that we can write $Y Y^*$ in $\mG$ as $A^* A$
where  $A = Y^*$. Hence all estimates hold for $G$ will hold for $\mG$.

By the the Cauchy-Schwarz inequality, we have
$$
|(YG^2)_{ij}| \le  |\sum_k (YG)_{ik} G_{kj} | \le | (YG G^* Y^*)_{ii}  | | (G^*G)_{ii}|.
$$
Together \eqref{57s} and \eqref{59s}, we have proved  \eqref{58ss}.
\end{proof}

\begin{remark}\label{rem:R}
The previous lemmas \ref{a priori2} and \ref{5.1} also hold if one entry of $X$ is supposed to vanish (this will be useful for us in the following of this
subsection).
Indeed, we just used the following facts: the independence of the entries,
the subexpotential decay, $\E X_{ab}=0$, and $\E |X_{ab}|^2=1/N$. We notice that only this last
condition is changed, and it was are only \eqref{110} and \eqref{110c}. Furthermore, for \eqref{110},
this affects the case $i=a$ as
$$
 G^{(\emptyset , T)} _{aa}
 =   - w^{-1}\left[1+  m_\mG^{(a, T)}-\frac1N \mG^{(a, T)}_{bb}+   |z|^2 \mG_{aa}^{(a,T)} +\cal Z^{(T)}_{a} \right]^{-1}
$$
The difference is therefore just $\frac1N \mG^{(a, T)}_{bb}$  which is of order $1/N$ of $m_\mG^{(a, T)}$, so the proof holds for
such matrices with a vanishing entry.
\end{remark}

  The following lemma is not useful for the proof of Theorem \ref{wfgb}, but it helps to understand how the Green's function comparison method works at here. Furthermore, some by-products are very useful for the whole proof of Theorem \ref{wfgb}.

\begin{lemma}\label{z11}
Suppose that  $|w|+|z|  <c$ for some fixed $c>0$, and that $\eta$ satisfies \eqref{eta}.
Assume that we have two ensembles $X_1, X_2$, both of them satisfying (\ref{subexp}), and with
matrix elements  moments matching up to order 3. Then we have
\be\label{544}
\left|   \E_{X^{(1)}}   (m(w,z))- \E_{X^{(2)}}   (m(w,z))\right| \leq \frac{ \varphi^C}{N\eta}.
\ee
\end{lemma}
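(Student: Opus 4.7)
The plan is a Lindeberg-style Green function comparison in the spirit of \cite{ErdYauYin2010PTRF}, applied here to the Hermitization matrix $Y_z^* Y_z$. Order the $N^2$ entries of $X$ lexicographically and construct an interpolating sequence $X^{[0]} = X^{(1)}, X^{[1]}, \ldots, X^{[N^2]} = X^{(2)}$, where $X^{[\gamma]}$ is obtained from $X^{[\gamma-1]}$ by replacing the $\gamma$-th entry of $X^{(1)}$ with the corresponding entry of $X^{(2)}$. Telescope
\[
\E_{X^{(1)}} m(w,z) - \E_{X^{(2)}} m(w,z) \;=\; \sum_{\gamma=1}^{N^2} \bigl(\E m(X^{[\gamma-1]}) - \E m(X^{[\gamma]})\bigr).
\]
At step $\gamma$ I decompose $X^{[\gamma-1]} = Q + s_1 E_{ab}$ and $X^{[\gamma]} = Q + s_2 E_{ab}$ with $Q_{ab} = 0$, the scalars $s_1, s_2$ independent of $Q$, and $\E s_1^k = \E s_2^k$ for $k \leq 3$ with subexponential tails.

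The core step is a resolvent expansion of $G(s) \deq (Y(s)^* Y(s) - w)^{-1}$ where $Y(s) \deq Q - zI + s E_{ab}$. The perturbation of $Y^* Y$ is $s A_1 + s^2 A_2$ with
\[
A_1 \;\deq\; (Q - zI)^* E_{ab} + E_{ba} (Q - zI), \qquad A_2 \;\deq\; E_{bb},
\]
of rank at most $2$ and $1$ respectively. Iterating the resolvent identity four times produces a polynomial expansion $m(s) = \sum_{k=0}^{4} s^k \mu_k(Q) + \mathrm{Rem}(s)$ whose coefficients are traces of products of $G(0)$ with copies of $A_1, A_2$. By moment matching, the contributions with $k \leq 3$ cancel in the expected difference, leaving
\[
\E[m(s_1) - m(s_2)] \;=\; \E\bigl[(s_1^4 - s_2^4)\,\mu_4(Q)\bigr] + \E[\mathrm{Rem}(s_1) - \mathrm{Rem}(s_2)],
\]
with $|\E s_1^4 - \E s_2^4| \leq C N^{-2}$ by \eqref{subexp} and variance $N^{-1}$. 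A preliminary truncation of the entries of $X$ at scale $N^{-1/2+\epsilon}$ makes $\mathrm{Rem}$ contribute a negligible $O_\prec(N^{-3/2})$ per swap.

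The key estimate is therefore the fourth-order coefficient. Because $A_1, A_2$ are supported on rows and columns $\{a,b\}$, each trace in $\mu_4$ reduces, after expansion, to a finite linear combination of products of entries of $G(0)$, $(Q-zI) G(0)$, $G(0)(Q-zI)^*$, and $(Q-zI)G(0)(Q-zI)^*$ at indices in $\{a,b\}$, with at most one free index inherited from the outer trace $\tfrac{1}{N}\tr$. By Lemmas \ref{a priori2} and \ref{5.1}, which apply to the matrix $Q$ with a vanishing entry by virtue of Remark \ref{rem:R}, each such entry is bounded by $\varphi^C$. The crucial factor $1/\eta$ arises from the free summation via the Cauchy-Schwarz identity $\sum_i |G_{ji}|^2 = \eta^{-1} \im G_{jj}$ implicit in \eqref{57s}. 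A careful accounting then yields $\sum_{(a,b)} |\mu_4(Q)| \leq \varphi^C N/\eta$, which combined with the $N^{-2}$ from matching moments produces the announced bound $\varphi^C/(N\eta)$.

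The main obstacle is precisely the extraction of the $1/\eta$ factor just described: a naive entrywise bound on the fourth-order trace would give $O(1)$ per swap and hence $O(N^2)$ after summing the $N^2$ steps, which is useless. One must exploit the low-rank structure of $A_1, A_2$ to reduce the trace to a few free summations, and then invoke \eqref{57s} to recover the $1/\eta$. A secondary difficulty specific to the non-Hermitian setting is that $A_1$ involves the matrix $Q - zI$ itself, so the entrywise bounds on $YG$, $GY^*$ and $YGY^*$ provided by Lemma \ref{5.1} (trivial in the Wigner case) are indispensable for controlling the expansion.
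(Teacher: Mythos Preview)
Your Lindeberg telescoping and moment-matching scheme is correct and leads to the same bound; the difference from the paper lies in how the one-swap difference $\tr S-\tr R$ is expanded. You iterate the resolvent identity for the perturbation $sA_1+s^2A_2$ of $Y^*Y$ and truncate the resulting Neumann series at order $s^4$. The paper instead derives an \emph{exact} closed formula for $\tr S-\tr R$ as a rational function of $v_{ab}$, via Sherman--Morrison--type rank-one perturbation identities (their equation \eqref{pos571}), and only then Taylor-expands this rational function in $v_{ab}$ to third order. The advantage of the paper's route is that the remainder control is automatic: one is expanding a rational function whose denominators $(1+p)(1+q)-rR_{bb}$ are already shown to be bounded away from zero by the a~priori bounds \eqref{eqn:bounds}, so no separate truncation of entries or control of a Neumann remainder is needed. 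Your route is closer to the original Green-function comparison in \cite{ErdYauYin2010PTRF} and is arguably more systematic (the bookkeeping of which products of $G$, $YG$, $GY^*$, $YGY^*$, $G^2$, $YG^2$, $YG^2Y^*$ appear at each order is mechanical), but it requires the extra step of justifying convergence or bounding the truncation error, which you handle by the preliminary entry truncation. Both approaches rely on the same a~priori inputs: Lemmas~\ref{a priori2} and~\ref{5.1}, extended to the matrix $Q$ with a vanishing entry via Remark~\ref{rem:R}, and in both the factor $1/\eta$ enters through exactly one occurrence of $G^2$ (equivalently, the Ward identity behind \eqref{57s}--\eqref{59s}).
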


\begin{proof}
For $k\in\llbracket 0,N^2\rrbracket$, define the following matrix $X_k$ interpolating between $X^{(1)}$ and $X^{(2)}$:
$$
X_k(i,j)=
\left\{
\begin{array}{ccc}
X^{(1)}(i,j)&{\rm\ if\ }&k<N(i-1)+j\\
X^{(2)}(i,j)&{\rm\ if\ }&k\geq N(i-1)+j\\
\end{array}
\right..
$$
Note that $X^{(1)}=X_0$ and $X^{(2)}=X_{N^2}$. A sufficient condition for \eqref{544} is that for any $k\geq 1$
   \be \label{nrpg}
\left|   \E_{X_k}   m(w,z)- \E_{X_{k-1}}  m(w,z)\right| \leq  \frac{ \varphi^C}{N^3\eta}
\ee
We are going to compare the Stieltjes transforms corresponding to $X_k$ and $X_{k-1}$ with a third one,
corresponding to the matrix $Q$ hereafter with deterministic $k$-th entry: noting $k=aN+b$
($a\in\llbracket 0,N-1\rrbracket$, $b\in\llbracket 1,N\rrbracket$) we define
the following $N\times N$ matrices (hereafter, $Y_\ell=X_\ell-z I$):
\begin{align}
v&=v_{ab}{\bf e}_{ab}=X^{(1)}(a,b){\bf e}_{ab},\label{def:vab}\\
u&=u_{ab}{\bf e}_{ab}=X^{(2)}(a,b){\bf e}_{ab},\label{def:uab}\\
Q&=Y_{k-1}-v=Y_{k}-u,\label{def:Q}\\
R&=(Q^* Q-wI)^{-1}\label{def:R}\\
\cal R&=(Q Q^*-wI)^{-1}\notag\\
S&=(Y^*_{k-1} Y_{k-1}-w I)^{-1}\label{def:S}\\
T&=(Y^*_{k} Y_{k}-w I)^{-1}\label{def:T}
 \end{align}
Then (\ref{nrpg}) holds if we can prove that
\begin{align}\label{p2}
\tr R-\E_{v_{ab}}\tr S &=  F + (\log N)^C \OO( N^{-2} \eta^{-1}  ),\\
\tr R-\E_{u_{ab}}\tr T&=  F + (\log N)^C \OO( N^{-2} \eta^{-1}  ),\notag
\end{align}
holds \hp{\zeta} where $\E_{v_{ab}}$ (resp. $\E_{u_{ab}}$) means an integration only with respect to the random variable $v_{ab}$
(resp. $u_{ab}$), and $F$ is a random variable, identical for both equations. We prove the first equation, the proof of
the other one being obviously the same. For this, we want to compare $S$ and $R$.
Denoting
$$
U=(1 +RQ^* v)^ {-1}R(1+ v^* QR)^{-1},
$$
we first compare $\tr U$ and $\tr R$, and then $\tr S$ and $\tr U$ ($S$ and $U$ are related by the simple formula (\ref{eqn:SU}) hereafter, with the notation (\ref{eqn:Omeg})).
For this first comparison, we introducing the notations
$$
p=(RQ^*)_{ba} v_{ab},\quad q= (QR)_{ab}v^*_{ba},
$$
appearing in the following identity obtained by expansion\footnote{All the expansions considered here converge with high probability. Anyways, they
aim at proving identities between rational functions, which just need to be checked for small values of the perturbation.}:
 \begin{align}
\tr U&=\tr(1+ v^* QR)^{-1} (1 +RQ^* v)^ {-1}R
= \sum_{k,l\geq 0}(-1)^{k+l}\tr\left((v^* QR)^k(RQ^* v)^l R\right)\notag\\
&=\tr R+ \sum_{k\geq 1}(-1)^{k}\tr\left((v^* QR)^k R\right)\notag
+\sum_{l\geq 1}(-1)^{l}\tr\left((RQ^* v)^l R\right)
+ \sum_{k,l\geq 1}(-1)^{k+l}\tr\left((v^* QR)^k(RQ^* v)^l R\right)\\
\tr U&=\tr R-  \frac{v^*_{ba} (QR^2)_{ab}}{1+q} - \frac{ ( R^2Q^*)_{ba}v _{ab}  }{1+p}+\frac{(QR^2Q^*)_{aa}|v_{ab}|^2R_{bb}}{(1+p)(1+q)}.\label{eqn:UR}
\end{align}
For this last equality, we extensively used that if $v_{ij}\neq 0$ then $(i,j)=(a,b)$.

To compare now $\tr S$ and $\tr U$, we introduce the notation
\begin{equation}\label{eqn:Omeg}
\Omega =
  v^* (Q R Q^* - 1)  v.
\end{equation}
A routine calculation yields
$  X^*_{k-1} X_{k-1}-w
   = U^{-1} - \Omega,
$
hence
\begin{equation}\label{eqn:SU}
S = (1 - U \Omega)^{-1} U.
\end{equation}
As  $Q RQ^*-I=w\cal R$, we have $\Omega_{ij}  = r \delta_{ib} \delta_{jb}$ where
$$
 r=|v_{ab} |^2 w\cal R_{aa}.
$$
Consequently, expanding (\ref{eqn:SU}) we get
$$
 S_{ij} =U_{ij}+  U_{ib} r U_{bj}
 + U_{ib} r U_{b b}rU_{bj}
 +U_{ib} r U_{b b}r U_{b b}rU_{bj}\cdots
$$
so after summation over $i=j$,
 \be\label{564}
 \tr S =\tr U+[U^2]_{bb}
 \, r(1-U_{bb}  r)^{-1}
 \ee
An argument similar to the one leading to (\ref{eqn:UR}) yields that, for any matrix $M$,
$$
[ (1 +RQ^* v)^ {-1}  M (1+ v^* QR)^{-1} ]_{bb} =  \frac{ M_{bb}}{(1+p)(1+q)},
$$
which yields, in our context,
$$
U_{bb}= \frac{ R_{bb}}{(1+p)(1+q)}, \
\frac{r}{1-U_{bb} r}=\frac{r(1+p)(1+q)}{(1+p)(1+q)-r  R_{bb}},\
[U^2]_{bb}=\frac{\left(R(1+ v^* QR)^{-1}(1 +RQ^* v)^ {-1}R\right)_{bb}}{{(1+p)(1+q)} }.
$$
The numerator of this last expression is also, by the same reasonning leading to (\ref{eqn:UR}),
$$
 (R^2)_{bb}-\frac{R_{bb}v^*_{ba}  (QR^2)_{ab} }{1+q}
 -\frac{(R^2 Q ^*)_{ba}v_{ab}R_{bb}  }{1+p}
 +\frac{(QR^2Q^*)_{aa}|v_{ab}|^2R_{bb}^2}{(1+p)(1+q)}
$$
Substituting the above expressions in (\ref{564}) and combining it with (\ref{eqn:UR}), we get
\begin{align}
\tr S-\tr R& =\frac{r (R^2)_{bb}}{(1+p)(1+q)-r  R_{bb}}\nonumber
 \\
 &+ \frac{(1+p)(1+q)}{(1+p)(1+q)-r  R_{bb}}
\left( - \frac{v^*_{ba} (QR^2)_{ab}}{1+q}- \frac{ ( R^2Q^*)_{ba}v _{ab}  }{1+p}+\frac{(QR^2Q^*)_{aa}|v_{ab}|^2R_{bb}}{(1+p)(1+q)}\right) \label{pos571}
\end{align}

In the above formula, $v_{ab}$ only appears through  $p, q$ (in a linear way)  and $r$ (in a
quadratic way). All other terms can be bounded thanks to
the  estimates for   $S$ from  Lemmas \ref{a priori2} and \ref{5.1}, which also holds for $R$, as stated in Remark \ref{rem:R}.
More precisely, the following bounds hold \hp{\zeta}, for any choice of $a$ and $b$ (including possibly $a=b$), and
under the assumption (\ref{eta}):
\begin{equation}\label{eqn:bounds}
\begin{array}{rll}
 |(RQ^*)_{ba}|,\ |(QR)_{ab}| &\le (\log N)^C &\ {\rm by\ (\ref{55s}),}\\
 |R_{bb}|,\ |\mathcal{R}_{aa}|&\leq   (\log N)^C  |w|^{-1/2}&\ {\rm by\  (\ref{53s}),}\\
|( R^2 )_{bb}|& \le (\log N)^C|w|^{-1/2}\eta^{-1} &\ {\rm by\  (\ref{57s}),}\\
|(Q R^2)_{ab} | &\le  (\log N)^C\eta^{-1}&\ {\rm by\  (\ref{58ss}),}\\
|(Q R^2 Q^*)_{aa}|&\le (\log N)^C |w|^{1/2} \eta^{-1}&\ {\rm by\  (\ref{59s}).}\\
\end{array}
\end{equation}

 Therefore, if we make an expansion  of \eqref{pos571} with respect to
$v_{ab}$, we get for example for the first term of the sum in \eqref{pos571} \hp{\zeta} (remember that $v_{ab}$ satisfies the subexponential decay property)
$$
\frac{r (R^2)_{bb}}{(1+p)(1+q)-r  R_{bb}}
=
f + \OO\left(  w (R^2)_{bb}\mathcal{R}_{aa}|v_{ab}|^4\max\left((RQ^*)_{ba}^2,(QR)_{ab}^2,w R_{bb}\mathcal{R}_{aa}\right) \right)\\
=f + \OO\left(\frac{ \varphi^{C_\zeta}}{N^2\eta}\right)
$$
where $f$ is a polynomial of degree 3 in $v_{ab}$.
A calculation shows that the same estimate, of type $g + \OO\left(\frac{ \varphi^{C_\zeta}}{N^2\eta}\right)$
with $g$ of degree 3 in $v_{ab}$, holds when expanding
the second term of the sum $\eqref{pos571}$.
This finishes the proof of (\ref{p2}).
\end{proof}

\begin{proof}[Proof of Lemma \ref{wfgb}]
First since Theorem \ref{z1} holds in the Gaussian case (cf.  Theorem \ref{z1Ginibre}),
then with  \eqref{nkwT},  the estimate on the smallest eigenvalues (Lemma \ref{smallest}) and the  largest eigenvalue (Lemma \ref{largest}),  the estimate
\eqref{mjs} holds in the case of centered and reduced complex Gaussian entries. Furthermore, using  \eqref{yxl},
we deduce that Lemma \ref{wfgb} holds in the Gaussian case.
We will therefore use the same method as in the proof of Lemma \ref{z11},
replacing the matrix elements one by one to extrapolate from the Ginibre ensemble to the general setting.
We assume that $X^{(1)}=X_0$ is the Gaussian case and  $X^{(2)}=X_{N^2}$ is the ensemble for which we want to prove
Lemma \ref{wfgb}.
We know that
$
Z^{(f)}_{X^{(1)},{\rm c}} \prec N^{C\e}
$, and we even know that, for any { fixed }$p>0$, $\E(|Z^{(f)}_{X^{(1)},{\rm c}}|^p)\leq N^{C\e p}$
(Theorem \ref{z1Ginibre} is proved by bounding the moments).
We will prove that, for { any fixed }$p\in2\N$ and $N$ sufficiently large,
\begin{equation}\label{eqn:induc}
\left|\E \left((Z^{(f)}_{X_{N^2},{\rm c}})^p\right)\right| \leq { C_p}N^{\tilde C\e p},
\end{equation}
for some $\tilde C>C$  independent of $N$ and $p$.
This is sufficient for our purpose, by the Markov inequality.

From $X_k$ and $X_{k-1}$, we defined the matrices $R$, $S$, $T$, in (\ref{def:R}), (\ref{def:S}), (\ref{def:T}),
From equation (\ref{pos571}), one can write
$$
\tr S-\tr R=P_{w,z,Q}(v_{ab})+B_{w,z,S}v_{ab}^4,
$$
where $P_{w,z,Q}$ is a degree 3 polynomial whose coefficients depend only on $Q$.
Moreover, a careful analysis of (\ref{pos571}), using the estimates (\ref{eqn:bounds}), proves that the coefficients of $P$ are
$\OO_\prec(\eta^{-1})$, and that $B$ is also $\OO_\prec(\eta^{-1})$.
Note that $v_{ab}^4=\OO_\prec(N^{-2})$, hence we obtain (hereafter, $\tilde Q=X_{k-1}-v=X_{k}-u$)
\begin{align*}
Z^{(f)}_{X_{k-1},{\rm c}}-Z^{(f)}_{\tilde Q,{\rm c}}=&
\int\Delta f(\xi)\int_I\chi(\eta)\phi'(E)\OO_{\prec}(\eta^{-1}N^{-2})\rd E\rd \eta\rd\xi\rd\overline\xi\\
&+
\int\Delta f(\xi)\int_I\chi(\eta)\phi'(E)\re\left(P_{w,z,Q}(v_{ab})
-P_{w,z,Q}(0)\right)\rd E\rd \eta\rd\xi\rd\overline\xi
\end{align*}
Noting that $|\phi'(E)|\leq (1+\log E)E^{-1}\1_{ 4\lambda_+>E>N^{\e-2}}$,
the first term in the above sum is
$$
\OO_\prec\left(N^{-2}\int|\Delta f(\xi)|\int_{I\cap\{ 4\lambda_+>E>N^{\e-2}\}}\eta^{-1}E^{-1}\rd E\rd \eta\rd\xi\rd\overline\xi\right)
=
\OO_\prec\left(N^{-2}|\Delta f|_{L^1}\right)=\OO_\prec\left(N^{-2}\right),
$$
where we omit the dependence in $f$ in the previous and next estimates, as $f$ does not depend on $N$.
Concerning the second term, it is of type
$
\re\mathcal{P}_{f,Q}(v_{ab}),
$
where  $\mathcal{P}$ has degree 3, vanishes at $0$, with coefficients of order $\OO_\prec(1)$ being independent of $v_{ab}$ and $u_{ab}$.
We therefore have
\begin{align}\label{eqn:rec}
Z^{(f)}_{X_{k-1},{\rm c}}&=Z^{(f)}_{\tilde Q,{\rm c}}+\Delta_{k-1},\
\Delta_{k-1}= \mathcal{P}_{f,Q}(v_{ab})+\OO_\prec(N^{-2}),\\
\notag
Z^{(f)}_{X_{k},{\rm c}}&=Z^{(f)}_{\tilde Q,{\rm c}}+\Delta_k,\
\Delta_k= \mathcal{P}_{f,Q}(u_{ab})+\OO_\prec(N^{-2})
\end{align}
 We can decompose
$$
(Z^{(f)}_{X_{k-1},{\rm c}})^p-(Z^{(f)}_{X_{k},{\rm c}})^p=\sum_{j=0}^{p-1} \binom{p}{j}
(Z^{(f)}_{\tilde Q,{\rm c}})^j (\Delta_{k-1}^{p-j}-\Delta_k^{p-j}).
$$
Since $\cal P$ has no constant term
and the first three moments of $v_{ab}$ and $u_{ab}$ coincide, we get
\begin{equation}\label{eqn:rec2}
\left | \E\left((Z^{(f)}_{X_{k-1},{\rm c}})^p\right)-\E\left((Z^{(f)}_{X_{k},{\rm c}})^p\right) \right | \le
\sum_{j=0}^{p-1}
\E\left|(Z^{(f)}_{\tilde Q,{\rm c}})^j\right| \OO_\prec(N^{-2})
\le
N^{-2}\left( \OO_\prec(1)+\E\left((Z^{(f)}_{\tilde Q,{\rm c}})^p\right)\right),
\end{equation}
where we used that $p$ is even,     $\E\left|(Z^{(f)}_{\tilde Q,{\rm c}})^j\right|\leq \E\left|(Z^{(f)}_{\tilde Q,{\rm c}})^p\right|^{j/p}$ and
\be
 \E\left|(Z^{(f)}_{\tilde Q,{\rm c}})^p\right|^{j/p}N^{\delta}\leq  \E\left|(Z^{(f)}_{\tilde Q,{\rm c}})^p\right|+N^{\delta\frac{p}{p-j}},
\ee
by Young's inequality (remember that $j\in\llbracket 0, p-1\rrbracket$)
Moreover, from (\ref{eqn:rec}),
\begin{align*}
\E\left((Z^{(f)}_{\tilde Q,{\rm c}})^p\right)&\leq \E\left((Z^{(f)}_{X_{k-1},{\rm c}})^p\right)+
\sum_{j=1}^p\binom{j}{p}\E\left(\left|Z^{(f)}_{X_{k-1},{\rm c}}\right|^{p-j}|\Delta_{k-1}|^j\right)\\
&\leq
\E\left((Z^{(f)}_{X_{k-1},{\rm c}})^p\right)+
\sum_{j=1}^p\binom{j}{p}\E\left(\left|Z^{(f)}_{X_{k-1},{\rm c}}\right|^{p}\right)^{\frac{p-j}{p}}
\E\left(\left|\Delta_{k-1}\right|^{p}\right)^{\frac{j}{p}}, \\
\E\left((Z^{(f)}_{\tilde Q,{\rm c}})^p\right)&\leq \left(\OO_\prec(1)+\E\left((Z^{(f)}_{X_{k-1},{\rm c}})^p\right)\right),
\end{align*}
where we used H\"older's inequality and the trivial bound $\Delta_{k-1}=\OO_\prec(1)$ in the last equation.
Using the last bound and  (\ref{eqn:rec2}), we obtain for any $\delta>0$
$$
\left | \E\left((Z^{(f)}_{X_{k-1},{\rm c}})^p\right)-\E\left((Z^{(f)}_{X_{k},{\rm c}})^p\right) \right |
\le
N^{-2}\left(N^{\delta}+\E\left((Z^{(f)}_{X_{k-1},{\rm c}})^p\right)\right).
$$
It implies
\be\label{36555}
\E\left((Z^{(f)}_{X_{k },{\rm c}})^p\right)+ N^{\delta}\leq (1+N^{-2})\left(\E\left((Z^{(f)}_{X_{k-1 },{\rm c}})^p\right)+N^{\delta}\right)
\ee
As $\E\left((Z^{(f)}_{X_{0},{\rm c}})^p\right)\leq N^{C\e p}$
and we
obtain (\ref{eqn:induc}) by iterating \eqref{36555}. It completes the proof of Lemma \ref{wfgb}.
\end{proof}

\begin{proof}[Proof of Lemma \ref{wfgbT}]
Notice that in the previous proof,  the third moment condition was first  used in \eqref{eqn:rec2}.
So  all  equations up to and including  \eqref{eqn:rec} are still valid.
Introducing the notation  $Y=Z/\sqrt{N}$, we have, instead of \eqref{eqn:rec2}, the following bound
\begin{equation}\label{eqn:rec2T}
\left | \E\left((Y^{(f)}_{X_{k-1},{\rm c}})^p\right)-\E\left((Y^{(f)}_{X_{k},{\rm c}})^p\right)  \right | \le
\sum_{j=0}^{p-1}
\E\left|(Y^{(f)}_{\tilde Q,{\rm c}})^j\right| \OO_\prec(N^{-2})
\le
 N^{-2}\left( \OO_\prec(1)+\E\left((Y^{(f)}_{\tilde Q,{\rm c}})^p\right)\right).
\end{equation}
Following the rest of the argument in the proof of Lemma \ref{wfgb},   we have proved that
\be
Y_{X,{\rm c}}^{(f)}\prec N^{ C\e}c_f,
 \ee
 and this completes  the proof of Lemma \ref{wfgbT}.
\end{proof}

 \section{The local circular law for the Ginibre ensemble}\label{App:Ginibre}

In this section, we derive the local circular law on the edge for the Ginibre ensemble (\ref{eqn:Ginibre}). This is required in the proof of Lemma
\ref{wfgb}, which proceeds by comparison with the Gaussian case.
Along the proof, we will need the following partition of $\C$, with distinct asymptotics of the correlation function $K_N$ for each domain (in the following we will always take $\e_N=\frac{\log N}{\sqrt{N}}$):
\begin{align*}
\Omega_1&=\{|z|<1-\e_N\},\ {\rm the\ bulk},\\
\Omega_2&=\{|z|>1+\e_N\},\\
\Omega_3&=\C-(\Omega_1\cup\Omega_2),\ {\rm the\ edge}.
\end{align*}

We first consider the case when the
test function is supported in $\Omega_1$ (this step is required even for the circular law on the edge, as the
support of the test functions overlaps the bulk), then when it can overlap $\Omega_3$.
The first case is directly adapted from
\cite{AmeHedMak2011}, the second one requires some more work.

In the following, we note $e_N(z)=\sum_{\ell=0}^{N}\frac{z^\ell}{\ell!}$ for the partial sums of the exponential function.

\subsection{The bulk case. } We prove the following local cirular law for the Ginibre ensemble,
with some more precision: the local convergence towards the Gaussian free field holds in the bulk,
generalizing the global convergence result obtained in \cite{RidVir2007}.

\begin{theorem}\label{lclGinibre}
Let $0<a<1/2$, $f$ be a smooth non-negative function with compact support,
and $z_0$ be such that $f_{z_0}$ is supported in $\Omega_1$.
Let $\mu_1,\dots,\mu_N$ be distributed as the eigenvalues of a Ginibre random matrix (\ref{eqn:Ginibre}).
Then the local circular law (\ref{yjgq}) holds.

More generally,
if $f $ additionally may depend on $N$,
such that   $\|f\|_\infty\leq C$, $\|f'\|_\infty\leq N^C$,
the local circular law holds when at distance at least $\e_N$ from the unit circle in the following sense.
Define $f^{(N)}(z)=f(N^a(z-z_0))$, for $z_0$ (depending on $N$) such that $f^{(N)}$ is supported on $\Omega_1$.
Denote $\cum_N(\ell)$ the
$\ell$-th cumulant of $X^{(N)}_f=\sum f^{(N)}(\mu_j)-N^{1-2a}\frac{1}{\pi}
\int_\C f(z)\rd z\rd \bar z$, and denote\footnote{$\!|\!|\!| f\!|\!|\!|_\infty:=
\max(\|f'\|_\infty^3,\|f'\|_\infty\|f''\|_\infty,\|f^{(3)}\|_\infty)$.}
$\delta_N=N^{3a-\frac{1}{2}}\Vol(\supp(\nabla f^{(N)}))\!|\!|\!| f\!|\!|\!|_\infty$.
Then as $N\to\infty$, for $\ell\geq 1$,
\begin{equation}\label{eqn:cumulants1}
\cum_N(\ell)=
\left\{
\begin{array}{ll}
\frac{1}{4\pi}\int|\nabla f(z)|^2\rdA(z)+\OO\left(\delta_N\right)&{\ \mbox{if}\ }\ell=2,\\
\OO\left(\delta_N\right)&{\ \mbox{if}\ }\ell\neq2.
\end{array}
\right.
\end{equation}
In particular, noting $\sigma^2=\frac{1}{4\pi}\int|\nabla f(z)|^2\rdA(z)$, if $\delta_N\ll \sigma^2$, the linear statistics  $\frac{1}{\sigma}X^{(N)}_f$ converge in law to a reduced centered Gaussian
random variable.
\end{theorem}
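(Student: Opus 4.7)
The plan is to compute each cumulant $\cum_N(\ell)$ of $X_f^{(N)}$ directly using the determinantal structure of the Ginibre ensemble, in the spirit of \cite{AmeHedMak2011,RidVir2007}, while keeping sharp track of errors in the local regime $0<a<1/2$. The starting point is the determinantal identity $\log\E[\me^{tX_f^{(N)}}]=\tr\log(I+(\me^{tf^{(N)}}-1)K_N)$, so that $\cum_N(\ell)$ is a finite linear combination of cyclic traces of the form $\tr((f^{(N)})^{k_1}K_N\cdots(f^{(N)})^{k_n}K_N)$. Since $K_N$ is the orthogonal projection in $L^2(\C,\me^{-N|z|^2}\rdA)$ onto polynomials of degree $<N$, the projection property $K_N\ast K_N=K_N$ rewrites every such trace as an $\ell$-fold cyclic integral over $\C^\ell$; collecting combinatorial coefficients gives
\be\label{eqn:cumellG}
\cum_N(\ell)=\int\mathcal P_\ell\pb{f^{(N)}(z_1),\ldots,f^{(N)}(z_\ell)}\prod_{i=1}^{\ell}K_N(z_i,z_{i+1})\prod_{i=1}^{\ell}\rdA(z_i),\quad z_{\ell+1}=z_1,
\ee
where $\mathcal P_\ell$ is a universal polynomial. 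The translation invariance $X_{f+c}^{(N)}=X_f^{(N)}+cN$ forces $\mathcal P_\ell(x,\ldots,x)\equiv 0$ for all $\ell\ge 2$.

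The only analytic input is the bulk asymptotic of the Ginibre kernel: a saddle-point analysis on $\sum_{k=0}^{N-1}(Nz_1\bar z_2)^k/k!$ yields, uniformly for $z_1,z_2\in\Omega_1$,
\be\label{eqn:KNasymp}
K_N(z_1,z_2)=\frac{N}{\pi}\me^{-\frac N2|z_1-z_2|^2}\me^{\ii N\im(z_1\bar z_2)}\pb{1+\OO(\me^{-c\log^2 N})}.
\ee
In particular $K_N(z,z)=N/\pi+\OO(N^{-D})$ on $\Omega_1$, so the centering in the definition of $X_f^{(N)}$ matches $\E[X_f^{(N)}]$ up to error $\OO(N^{-D})$, and $|\prod_i K_N(z_i,z_{i+1})|\lesssim (N/\pi)^\ell\me^{-\frac N2\sum|z_i-z_{i+1}|^2}$ localizes all the $z_i$'s within $\OO(N^{-1/2})$ of one another.

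For the variance, $\mathcal P_2(x_1,x_2)=(x_1-x_2)^2/2$; rescaling $z_i=z_0+N^{-a}w_i$ and then $u=N^{1/2-a}(w_2-w_1)$ turns the kernel into the Gaussian $\me^{-|u|^2}$ at $|u|=\OO(1)$. Taylor-expanding $f(w_2)-f(w_1)=\nabla f(w_1)\cdot(w_2-w_1)+\OO(\|f''\|_\infty|w_2-w_1|^2)$ and using the planar Gaussian moments $\int_{\R^2}u_iu_j\me^{-|u|^2}\rdA(u)=\tfrac{\pi}{2}\delta_{ij}$ produce the claimed $\tfrac{1}{4\pi}\int|\nabla f|^2\rdA+\OO(\delta_N)$, the remainder gathering the next-order Taylor term (supported on $\supp\nabla f^{(N)}$) and the exponentially small error in \eqref{eqn:KNasymp}.

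The main obstacle is the uniform bound on higher cumulants $\ell\ge 3$: naively \eqref{eqn:cumellG} contains $\ell$ copies of a kernel of size $\sim N$ integrated over $\ell$ variables, a potential $N^\ell$ blowup. The crucial cancellation is the vanishing of $\mathcal P_\ell$ on the diagonal, which combined with the Gaussian localization of the kernel product at scale $N^{-1/2}$ gives, after rescaling, that each difference $f^{(N)}(w_i)-f^{(N)}(w_j)=\OO(N^{-(1/2-a)}\|\nabla f\|_\infty)$. A combinatorial analysis of $\mathcal P_\ell$, by induction on $\ell$ or by identification with the factorial cumulants of a Poisson process on $\C$, shows that it vanishes to order at least two on the diagonal for $\ell\ge 3$, which is enough to bound \eqref{eqn:cumellG} by $\Vol(\supp\nabla f^{(N)})\,N^{3a-1/2}\!|\!|\!| f\!|\!|\!|_\infty=\delta_N$ up to combinatorial constants depending only on $\ell$. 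The CLT then follows from the moment method: $\sigma^{-2}\cum_N(2)\to 1$ and $\sigma^{-\ell}\cum_N(\ell)\to 0$ for $\ell\ge 3$ whenever $\delta_N\ll\sigma^2$, characterizing convergence to the standard Gaussian.
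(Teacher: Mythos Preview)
Your overall strategy coincides with the paper's (and with \cite{AmeHedMak2011}): write $\cum_N(\ell)$ as the cyclic integral \eqref{eqn:cumellG}, replace $K_N$ by its bulk Gaussian approximation, and Taylor-expand the integrand around the diagonal. Your treatment of $\ell=1$ and $\ell=2$ is correct and matches the paper.

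The gap is in your higher-cumulant argument. You assert that ``$\mathcal P_\ell$ vanishes to order at least two on the diagonal for $\ell\ge 3$'' and that this is enough to produce $\delta_N$. Neither half of this is right as stated. First, if ``order two'' means $\mathcal P_\ell=O(|\Delta x|^2)$, the resulting bound is only $O(1)$, not $\delta_N$: with $|\Delta x|\sim N^{a-1/2}$ on the Gaussian support and the usual $N\cdot\Vol(\supp\nabla f^{(N)})$ from integrating out $z$ and $h$, two difference factors give $N^{2a}\Vol(\supp\nabla f^{(N)})=O(1)$. You need the third-order remainder to land on $N^{3a-1/2}\Vol(\supp\nabla f^{(N)})\,\!|\!|\!|f\!|\!|\!|_\infty=\delta_N$, i.e.\ you must show that \emph{all} second-order contributions vanish. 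Second, this is \emph{not} a purely combinatorial statement about $\mathcal P_\ell$: already for $\ell=3$ one checks $\partial_{x_1}F_3|_{\mathrm{diag}}=\tfrac12 x^2\neq 0$, and the mixed second $z$-derivatives $\partial_i\partial_j F_\ell$, $\partial_i\bar\partial_j F_\ell$ ($i\neq j$) on the diagonal are nonzero in general.

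What the paper (following \cite{AmeHedMak2011}) actually does is split the second-order Taylor piece into four terms $A_N+B_N+C_N+D_N$. The terms $A_N,B_N,C_N$ carry the nonvanishing coefficients $\partial_i\partial_j F_\ell$, $\partial_i^2 F_\ell$, $\partial_i\bar\partial_j F_\ell$ ($i\neq j$) and are killed \emph{after} integrating against $k_N$, because the phase $\me^{\ii N\im(z_1\bar z_2)}$ makes integrals of $h_ih_j$ or $h_i\bar h_j$ ($i\neq j$) vanish. Only $D_N$, which carries $\sum_i\partial_i\bar\partial_i F_\ell=\Delta_\ell F_\ell$, is handled combinatorially via the identity $\Delta_\ell F_\ell|_{\mathrm{diag}}=0$ for $\ell\ge 3$. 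With all second-order terms gone, the third-order remainder $E_N$ is then bounded by $\delta_N$ exactly as you indicate. So the missing ingredient in your sketch is the role of the kernel's complex phase in eliminating the non-Laplacian second-order terms; purely real-variable combinatorics of $\mathcal P_\ell$ cannot replace this step.
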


As a first step in the proof, we need the following elementary estimate.

\begin{lemma}\label{lem:KnBulk}
Let
\begin{equation}\label{eqn:kN}
k_N(z_1,z_2)=\frac{N}{\pi}e^{-\frac{N}{2}(|z_1|^2+|z_2|^2-2z_1\overline{z_2})}.
\end{equation}
There is some $c>0$ such that uniformly in $|z_1\overline{z_2}|<1-\e_N$ we have
$$
K_N(z_1,z_2)=k_N(z_1,z_2)+\OO(e^{-c (\log N)^2}),
$$
where $K_N$ is the Ginibre kernel (\ref{eqn:GinibreKernel}).
\end{lemma}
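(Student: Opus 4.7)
The strategy is simply to recognize that $k_N$ is obtained from $K_N$ by completing the truncated exponential series, so the error is just the tail of the series for $e^{Nz_1\overline{z_2}}$, which is explicitly computable.

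More precisely, the first step is to rewrite
\[
k_N(z_1,z_2) = \frac{N}{\pi}e^{-\frac{N}{2}(|z_1|^2+|z_2|^2)}\,e^{Nz_1\overline{z_2}} = \frac{N}{\pi}e^{-\frac{N}{2}(|z_1|^2+|z_2|^2)}\sum_{\ell=0}^{\infty}\frac{(Nz_1\overline{z_2})^\ell}{\ell!},
\]
so that
\[
k_N(z_1,z_2) - K_N(z_1,z_2) = \frac{N}{\pi}e^{-\frac{N}{2}(|z_1|^2+|z_2|^2)}\sum_{\ell=N}^{\infty}\frac{(Nz_1\overline{z_2})^\ell}{\ell!}.
\]
Set $u = |z_1\overline{z_2}|$, so that by assumption $u \leq 1-\e_N$. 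The ratio of consecutive terms in the tail series is $|Nz_1\overline{z_2}|/(\ell+1) \leq Nu/(N+1) \leq 1-\e_N/2$ for $\ell \geq N$, so the geometric-series bound gives
\[
\sum_{\ell=N}^{\infty}\frac{(Nu)^\ell}{\ell!} \leq \frac{2}{\e_N}\cdot\frac{(Nu)^N}{N!}.
\]

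The second step is to apply Stirling's formula $N! \geq \sqrt{2\pi N}(N/e)^N$ to obtain $(Nu)^N/N! \leq (eu)^N/\sqrt{2\pi N}$, and to use the elementary inequality $|z_1|^2+|z_2|^2 \geq 2|z_1||z_2| = 2u$. Combining,
\[
|k_N - K_N| \leq \frac{N}{\pi\sqrt{2\pi N}}\cdot\frac{2}{\e_N}\cdot \exp\bigl(N(\log u + 1 - u)\bigr).
\]
The final step is to observe that the function $\psi(u) = \log u + 1 - u$ vanishes at $u=1$ together with its first derivative, and satisfies $\psi(u) \leq -\frac{(1-u)^2}{2}$ on $(0,1]$ (convexity of $-\log$). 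Since $u \leq 1-\e_N$, we get $N\psi(u) \leq -N\e_N^2/2 = -(\log N)^2/2$ for our choice $\e_N = (\log N)/\sqrt{N}$. The polynomial prefactor $N/(\pi\sqrt{2\pi N}\e_N)$ is absorbed into any $c<1/2$, yielding the claimed bound $O(e^{-c(\log N)^2})$.

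There is no real obstacle here: the only points requiring a tiny bit of care are checking that the geometric ratio stays uniformly below $1$ (which relies on $u < 1-\e_N$ with $\e_N \to 0$ slowly enough that $N\e_N \to \infty$) and that the quadratic bound on $\psi$ is applied on the right side of $u=1$. Note also that the bound is uniform in $(z_1,z_2)$ with $|z_1\overline{z_2}| \leq 1-\e_N$, with no additional restriction on $|z_1|$ or $|z_2|$ individually, thanks to the crucial cancellation $\frac{1}{2}(|z_1|^2+|z_2|^2) - u \geq 0$.
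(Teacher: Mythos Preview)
Your proof is correct and follows essentially the same route as the paper's own argument: both express $K_N-k_N$ as the tail $\ell\ge N$ of the exponential series, bound that tail by its first term times a geometric factor (your $2/\e_N$ corresponds to the paper's $1/(1-|z_1z_2|)$), apply Stirling together with the AM--GM inequality $|z_1|^2+|z_2|^2\ge 2|z_1z_2|$, and finish with the quadratic lower bound $u-1-\log u\ge\tfrac12(1-u)^2$ evaluated at $u\le 1-\e_N$.
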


\begin{proof}
This is elementary from the following calculation, for any $|z_1z_2|<1$:
\begin{align}
K_N(z_1,z_2)&=\frac{N}{\pi}e^{-\frac{N}{2}(|z_1|^2+|z_2|^2}\notag
\left(e^{Nz_1\overline{z_2}}-\sum_{\ell\geq N}\frac{(Nz_1\overline{z_2})^\ell}{\ell!}\right)\\
&=k_N(z_1,z_2)+\OO\left(Ne^{-\frac{N}{2}(|z_1|^2+|z_2|^2}\frac{|Nz_1\overline{z_2}|^N}{N!}\sum_{m\geq 0}|z_1,\overline{z_2}|^m\right)\notag\\
&=k_N(z_1,z_2)+\OO\left(Ne^{-N((|z_1|^2+|z_2|^2)/2-1-\log|z_1z_2|)}\frac{1}{1-|z_1z_2|}\right)\notag\\
K_N(z_1,z_2)&=k_N(z_1,z_2)+\OO\left(Ne^{-\frac{N}{2}(1-|z_1z_2|)^2}\frac{1}{1-|z_1z_2|}\right)\label{eqn:KNInside}
\end{align}
where we used Stirling's formula and the fact that $(|z_1|^2+|z_2|^2)/2-1-\log|z_1z_2|>
|z_1z_2|-1-\log|z_1z_2|>\frac{1}{2}(1-|z_1z_2|)^2$.
\end{proof}

The next following estimate about $K_N$ will be used to bound it for any distant $z_1$ and $z_2$.

\begin{lemma}\label{lem:fastDecrease}
There is a constant $c>0$ such that for any $N\in\N^*$ and any $z_1, z_2$ in $\C$,
$$
\left|K_N(z_1,z_2)\right|\leq
c N\left(e^{-N\frac{|z_1-z_2|^2}{2}}+\frac{|z_1\overline{z_2}|^{N+1}}
{1+\sqrt{N}|1-z_1\overline{z_2}|}\ e^{-N\left(\frac{|z_1|^2+|z_2|^2}{2}-1\right)}\right).
$$
\end{lemma}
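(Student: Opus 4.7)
My plan is to split the kernel as $K_N(z_1,z_2) = k_N(z_1,z_2) - \mathcal T_N(z_1,z_2)$, where $k_N$ is the ``full-exponential'' kernel of \eqref{eqn:kN} obtained by extending the partial sum $\sum_{\ell=0}^{N-1}$ to $\sum_{\ell=0}^{\infty}$, and $\mathcal T_N := \frac{N}{\pi}e^{-\frac{N}{2}(|z_1|^2+|z_2|^2)}T_N(w)$ with $T_N(w) := \sum_{\ell\ge N}(Nw)^\ell/\ell!$ and $w = z_1\overline{z_2}$. The first piece satisfies the exact identity $|k_N(z_1,z_2)| = \frac{N}{\pi}e^{-\frac{N}{2}|z_1-z_2|^2}$, matching the first summand of the claim. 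The work therefore reduces to controlling $|\mathcal T_N|$ by the right-hand side.

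For the tail I would use the integral representation
$$T_N(w) = \frac{(Nw)^N}{(N-1)!}\int_0^1 e^{Nwt}(1-t)^{N-1}\,dt,$$
obtained by Taylor-expanding $e^{Nwt}$ and applying the beta integral $\int_0^1 t^k(1-t)^{N-1}\,dt = k!(N-1)!/(N+k)!$. Stirling gives $|(Nw)^N/(N-1)!|\le C\sqrt N |w|^N e^N$, so, after absorbing prefactors, matching the second summand of the bound reduces to the purely analytic estimate
$$\sqrt N\,e^{-N\re w}\,|I(w)|\;\le\;\frac{C\,|w|}{1+\sqrt N\,|1-w|},\qquad I(w) := \int_0^1 e^{Nwt}(1-t)^{N-1}\,dt.$$
I would establish this by a Laplace/saddle-point analysis in three regimes keyed to the critical point $t^{\star} = 1 - 1/w$ of the phase: (i) $|w|\le 1-\delta$, where the modulus of the integrand peaks at $t=0$ and one integration by parts (equivalently, a geometric-series bound on $T_N$) gives the factor $1/(N|1-w|)$; (ii) $|w|\ge 1+\delta$, where the saddle is interior and the Gaussian expansion at $t^{\star}$ produces a factor $1/\sqrt{N|1-w|}$, the target $|w|^{N+1}$ leaving room to absorb the boundary contribution $e^{N\re w}$; (iii) the transition window $|1-w|\lesssim 1/\sqrt N$, where the substitution $t = 1-\tau/\sqrt N$ reduces $I(w)$ to a Gaussian error integral of width $1/\sqrt N$, producing the interpolation $1/(1+\sqrt N|1-w|)$.

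In the small-$|w|$ regime the $|w|^{N+1}$ factor in the target is too small to absorb the natural $(e|w|)^N/\sqrt N$ size of $T_N(w)$; one does not need the second term at all in this case. Indeed, using $|w| = |z_1||z_2|\le (|z_1|^2+|z_2|^2)/2 =: S$ and the elementary convexity inequality $\log|w| + 1 - S \le \log|w| + 1 - |w| \le 0$ with equality only at $|w|=1$, one gets $|\mathcal T_N| \le C\sqrt N\,e^{N(\log|w|+1-S)}$, which is exponentially small in $N$ for $|w|$ bounded away from $1$ and is therefore absorbed into $CN\,e^{-N|z_1-z_2|^2/2}$.

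The main obstacle is the unified estimate across the three Laplace regimes, which is essentially the uniform asymptotic expansion of the normalized lower incomplete gamma function $\gamma(N,Nw)/\Gamma(N) = e^{-Nw}T_N(w)\cdot\Gamma(N)/(N-1)!$ in the complex $w$-plane. I would either invoke Temme's uniform expansion as a black box or, in a self-contained version, deform the contour in $\Gamma(N,Nw) = \int_{Nw}^{\infty}e^{-s}s^{N-1}\,ds$ through the steepest-descent saddle $s = N-1$ and read off the bound from the resulting Gaussian integral; either route delivers the required $(1+\sqrt N|1-w|)^{-1}$ interpolation uniformly in $w \in \C$.
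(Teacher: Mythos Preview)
The main gap is in the regime $|w|>1$ (with $w=z_1\overline{z_2}$), where your triangle-inequality scheme $|K_N|\le |k_N|+|\mathcal T_N|$ cannot succeed: both pieces are individually much larger than the target. Indeed $|k_N|=\tfrac{N}{\pi}e^{-N(S-\re w)}$ with $S=\tfrac12(|z_1|^2+|z_2|^2)$, and already for $w=2$ this exceeds both summands of the claim, since comparison with the second summand reduces to $e^{N(\re w-1)}(1+\sqrt N\,|1-w|)\le C|w|^{N+1}$, which fails because $e>2$. The point is that for $|w|>1$ the partial sum $e_{N-1}(Nw)$ is \emph{much smaller} than $e^{Nw}$, a cancellation your decomposition discards. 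Correspondingly, the honest reduction of ``$|\mathcal T_N|\le$ second summand'' is $\sqrt N\,|I(w)|\le C|w|/(1+\sqrt N\,|1-w|)$, with no $e^{-N\re w}$ factor; this is false for real $w>1$, where $|I(w)|\sim e^{N(w-1-\log w)}/(\sqrt N\,w)$ is exponentially large, while inserting the spurious $e^{-N\re w}$ makes the inequality fail instead at $w=-\tfrac12$. A smaller slip: for $|w|$ bounded away from $1$ with $\re w<0$ (e.g.\ $w=-\tfrac12$), your bound $|\mathcal T_N|\le C\sqrt N\,e^{N(\log|w|+1-S)}$ is \emph{not} dominated by the first summand $CN e^{-N(S-\re w)}$ (since $\log|w|+1-\re w>0$); it is, however, dominated by the second summand, so that part is easily repaired.

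The paper does not decompose: it bounds $K_N$ directly through the uniform asymptotics of $e^{-Nw}e_{N-1}(Nw)$ in three zones of $|w-1|$ (Lemmas~\ref{lem:close1} and~\ref{lem:far1}, plus an intermediate-annulus estimate quoted from~\cite{AmeOrt2011}). Your closing suggestion---Temme's expansion, or steepest descent on $\Gamma(N,Nw)=\int_{Nw}^{\infty}e^{-s}s^{N-1}\,ds$---is exactly this direct route (since $e_{N-1}(Nw)=e^{Nw}\,\Gamma(N,Nw)/\Gamma(N)$) and would work; but it supersedes rather than completes the $k_N-\mathcal T_N$ scheme, because for $|w|>1$ one must estimate $K_N$ itself, not the tail.
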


\begin{proof}
In the case $|z_1\overline{z_2}-1|<1/\sqrt{N}$, by case $(i)$ in Lemma \ref{lem:close1},
we get
$$
K_N(z_1,z_2)=\frac{N}{\pi}e^{-\frac{|z_1|^2+|z_2|^2}{2}}e^{Nz_1\overline{z_2}}\left(\frac{1}{2}\erfc(\sqrt{N}\mu(z_1\overline{z_2}))+\OO(N^{-1/2})\right).
$$
As $\erfc$ is uniformly bounded in $\C$, we get $|K_N(z_1,z_2)|=\OO(e^{-N\frac{|z_1-z_2|^2}{2}})$.

In the case $\frac{1}{\sqrt{N}}\leq|z_1\overline{z_2}-1|<\delta$, for $\delta$ fixed and small enough, the estimate was obtained
in \cite{AmeOrt2011} (cf. the proof of Lemma 8.10 there, based on Lemma \ref{lem:close1} here).
Finally, in the case $|z_1\overline{z_2}-1|\geq\delta$, an elementary calculation from
Lemma \ref{lem:far1} implies that the result holds, no matter that $z_1\overline{z_2}$
is in $D-U$ or $D^c-U$.
\end{proof}

\begin{proof}[Proof of Theorem \ref{lclGinibre}]
Proving equation  (\ref{eqn:cumulants1}) is sufficient: this convergence of the cumulants is well-known to imply the weak convergence to the Gaussian distribution, and it also yields the local circular law (\ref{yjgq}):
if $m_N(\ell)$ denotes the $\ell$-th moment of
$X^{(N)}_f$, one can write
$$
m_N(\ell)=\sum_{i_1+\dots+i_\ell=\ell}\lambda_{i_1,\dots,i_k}\cum_N(i_1)\dots \cum_N(i_\ell)
$$
for some universal constants $\lambda_{i_1,\dots,i_\ell}$'s, where the $i_j$'s are in $\N^*$.
Hence writing $\Phi=\left(\sigma^2+
\delta_N\right)^{1/2}$
we get
$
m_N(\ell)=\OO\left(\Phi^{k}\right)
$. Consequently, for any $\e>0$,
$$
\Prob(|X^{(N)}_f|>N^{\e}\Phi)\leq m_N(\ell) N^{-\ell\e}\Phi^{-k}=\OO(N^{-\ell\e}),
$$
which concludes the proof of the local circular law by choosing $\ell$ large enough.

To prove (\ref{eqn:cumulants1}), first note that
from \ref{eqn:kN} evaluated on the diagonal $z_1=z_2$,
$\cum_N(1)=\OO(e^{-c(\log N)^2})$, so we consider now cumulants of order $\ell\geq 2$.
Due to a peculiar integrability structure of determinantal point processes,
the cumulants have an easy form for the Gaussian matrix ensembles, as observed first by Costin and Lebowitz \cite{CosLeb1995},
and used later by Soshnikov \cite{Sos2000}. For our purpose, in the context of the Ginibre ensemble,
we will use the following very useful expression (\ref{cumAHM}) due to Ameur, Hedenmalm and Makarov \cite{AmeHedMak2011}, which requires first the following notations. The usual differential operators are noted
$\partial=\frac{1}{2}(\partial_x-\ii\partial_y),
\bar\partial=\frac{1}{2}(\partial_x+\ii\partial_y),
\Delta_\ell=\partial_1\bar\partial_1+\dots+\partial_\ell\bar\partial_\ell$, we will also make use of the
length-$\ell$ vectors $z\1_\ell=(z,\dots,z)$,
$h=(h_1,\dots,h_\ell)$, and the following function often appearing in the combinatorics of cumulants for determinantal point processes:
$$F_\ell(z_1,\dots,z_\ell)=\sum_{j=1}^\ell\frac{(-1)^{j-1}}{j}\sum_{k_1+\dots+k_j=\ell,k_1,\dots,k_j\geq 1}
\frac{\ell!}{k_1!\dots k_j!}\prod_{m=1}^j f^{(N)}(z_m)^{k_m}.
$$
Then the $\ell$-th cumulant of $X_f^{(N)}$ is then
\begin{equation}\label{eqn:Fl}
\cum_N(\ell)=\int_{\C^{\ell}} F_\ell(z_1,\dots,z_\ell)K_N(z_1,z_2)K_N(z_2,z_3)\dots K_N(z_\ell,z_1)\A(z_1,\dots,z_\ell),
\end{equation}

Remarkably, some combinatorics can prove that $F_\ell(z\1_\ell)=0$:
it vanishes on the diagonal.
Moreover, the following  approximations of $F_\ell$ close to the diagonal will be useful:
$$
F_\ell(z\1_\ell+h)=\sum_{j\geq 1} T_j(z,h),\ {\rm where}\
T_j(z,h)=\sum_{|\alpha+\beta|=j}(\partial^\alpha\bar\partial^\beta F_\ell)(z\1_\ell)\frac{h^\alpha\bar h^\beta}{\alpha!\beta!}.$$
We will also need the third error term after second order approximation, $r(\lambda,h)=F_\ell(z\1_\ell+h)-T_1(z,h)-T_2(z,h)$.
The last notation we will need from \cite{AmeHedMak2011} is
$Z_\ell(z)=\sum_{i<j}\left(\partial_i\bar\partial_j F_\ell\right)(z\1_\ell)$, and
the area measure in $\C^\ell$ will be noted
$\A(z,h_1,\dots,h_\ell)=\A(z)\A(h_1)\dots\A(h_\ell)$, where $\A(z)=\rd^2z$.
Then, as proved in \cite{AmeHedMak2011}, for $\ell\geq 2$
\begin{equation}\label{cumAHM}
\cum_N(\ell)=A_N(\ell)+B_N(\ell)+C_N(\ell)+D_N(\ell)+E_N(\ell)
\end{equation}
where
\begin{align*}
A_N(\ell)&=\int_{\C^3}\Re\left(\sum_{i\neq j}(\partial_i\partial_j F_\ell)(z\1_\ell) h_1h_2\right) K_N(z,z+h_1)K_N(z+h_1,z+h_2)K_N(z+h_2,z)\A(z,h_1,h_2)\\
B_N(\ell)&=\Re \int_{\C^2}  \sum_{i}({\partial_i}^2 F_\ell)(z\1_\ell) h_1^2 K_N(z,z+h_1)K_N(z+h_1,z)\A(z,h_1)  \\
C_N(\ell)&=2\int_{\C^3}\Re\left(  Z_\ell(z) h_1\bar h_2\right) K_N(z,z+h_1)K_N(z+h_1,z+h_2)K_N(z+h_2,z)\A(z,h_1,h_2)\\
D_N(\ell)&=\int_{\C^2}(\Delta_\ell F_\ell)(z\1_\ell)|h_1|^2 K_N(z,z+h_1)K_N(z+h_1,z)\A(z,h_1)\\
E_N(\ell)&=\int_{\C^{\ell+1}}r(z,h) K_N(z,z+h_1)K_N(z+h_1,z+h_2)\dots K_N(z+h_\ell,z)\A(z,h_1,\dots,h_\ell).
\end{align*}
Remember that
the support of $f^{(N)}$ is at distance $\e_N$ from the unit circle, so each one of the above integrands vanishes if $z$ is out of
the disk with radius $1-\e_N$. Therefore, by using Lemma \ref{lem:fastDecrease} to restrict the domain, and then Lemma
\ref{lem:KnBulk} to approximate the kernel strictly inside the unit disk, one easily gets that
$$
\cum_N(\ell)=\tilde A_N(\ell)+\tilde B_N(\ell)+\tilde C_N(\ell)+\tilde D_N(\ell)+\tilde E_N(\ell)+\OO(e^{-c (\log N)^2}),
$$
for some $c>0$, where
\begin{align*}
\tilde A_N(\ell)&=\int_{\C^3\cap\{\|h\|_\infty<\e_N\}}\Re\left(\sum_{i\neq j}(\partial_i\partial_j F_\ell)(z\1_\ell) h_1h_2\right) k_N(z,z+h_1)k_N(z+h_1,z+h_2)k_N(z+h_2,z)\A(z,h_1,h_2)\\
\tilde B_N(\ell)&=\Re \int_{\C^2\cap\{\|h\|_\infty<\e_N\}}  \sum_{i}({\partial_i}^2 F_\ell)(z\1_\ell) h_1^2 k_N(z,z+h_1)k_N(z+h_1,z)\A(z,h_1)  \\
\tilde C_N(\ell)&=2\int_{\C^3\cap\{\|h\|_\infty<\e_N\}}\Re\left(  Z_\ell(z) h_1\bar h_2\right) k_N(z,z+h_1)k_N(z+h_1,z+h_2)k_N(z+h_2,z)\A(z,h_1,h_2)\\
\tilde D_N(\ell)&=\int_{\C^2\cap\{\|h\|_\infty<\e_N\}}(\Delta_\ell F_\ell)(z\1_\ell)|h_1|^2 k_N(z,z+h_1)k_N(z+h_1,z)\A(z,h_1)\\
\tilde E_N(\ell)&=\int_{\C^{\ell+1\cap\{\|h\|_\infty<\e_N\}}}r(z,h) k_N(z,z+h_1)k_N(z+h_1,z+h_2)\dots k_N(z+h_\ell,z)\A(z,h_1,\dots,h_\ell).
\end{align*}	
Following closely the proof  in \cite{AmeHedMak2011},  integrating the $h_i$'s
for fixed $z$,  the terms $\tilde A_N(\ell), \tilde B_N(\ell), \tilde C_N(\ell)$ are $\OO(e^{-c (\log N)^2})$.
Moreover, \cite{AmeHedMak2011} proved that, for $\ell\geq 3$, $\Delta_\ell F_\ell$ vanishes on the diagonal,
and that $\Delta_2F_2(z_1,z_2)\mid_{z_1=z_2=z}=\frac{1}{2}|\nabla f^{(N)}(z)|^2$. Consequently,
$\tilde D_N(\ell)=0$ if $\ell\geq 3$ and
\begin{multline*}
\tilde D_N(2)=\frac{1}{2}\int_{\C^2}|\nabla f^{(N)}(z)|^2|h|^2\frac{N^2}{\pi^2}e^{-N|h|^2}\A(z,h)+\OO(e^{-c (\log N)^2})\\=\frac{1}{4\pi}\int_\C|\nabla f^{(N)}(z)|^2\A(z)+\OO(e^{-c (\log N)^2})
=
\frac{1}{4\pi}\|\nabla f\|_2^2+\OO(e^{-c (\log N)^2}).
\end{multline*}
To finish the proof, we need to bound the error term $\tilde E_N(\ell)$.
We divide the set of possible $(z,h)$ in two parts. Following \cite{AmeHedMak2011}, define
$
Y_{n,\ell}=\{(z,h)\in\C^{\ell+1}: z\in\supp(\nabla f^{(N)})\}
$.
On the complement of $Y_{n,k}$,
 $z\not\in\supp(\nabla f^{(N)})$, and then $r(z,h)=0$.
On $Y_{n,\ell}$, $r(z,h)$ is of order $\OO(\|{F_\ell}^{(3)}\|_\infty\|h\|_\infty^3)
=
\OO(N^{3a}  \!|\!|\!| f\!|\!|\!|_\infty\e_N^3)
$,
each $k_N$ is $\OO(N)$, and
the domain of integration has size
$\OO\left(\Vol(\supp(\nabla f^{(N)}))\e_N^{2\ell}\right)
$,
so
$$
\tilde E_N(\ell)=\OO\left((\log N)^C N^{3a-\frac{1}{2}} \!|\!|\!| f\!|\!|\!|_\infty
\Vol(\supp(\nabla f^{(N)}))
\right),
$$
concluding the proof.
\end{proof}

\subsection{The edge case. }

In the edge case, i.e. $|z_0|\in\Omega_3$, we have the following theorem for the Ginibre ensemble.

\begin{theorem}\label{z1Ginibre}
Let $\mu_1,\dots,\mu_N$ be distributed as the eigenvalues of a Ginibre random matrix (\ref{eqn:Ginibre}),
and $z_0\in \Omega_3$.  Suppose that  $f$ is smooth and compactly supported, and let $f_{z_0}(z)=N^{2a}f(N^{a}(z-z_0))$.
Then for any $0<a<1/2$, the estimate (\ref{yjgq2}) holds.
\end{theorem}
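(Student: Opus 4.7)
The plan is to mimic the proof of Theorem \ref{lclGinibre} by controlling all cumulants of the linear statistic
$$X_f^{(N)} = \sum_j f_{z_0}(\mu_j) - \frac{N}{\pi}\int_D f_{z_0}(z)\rdA(z).$$
By the Ameur-Hedenmalm-Makarov identity (\ref{cumAHM}), for $\ell \ge 2$ the cumulants $\cum_N(\ell)$ decompose into integrals against products of kernels $K_N(z_i,z_j)$. The goal is polynomial-in-$N$ moment bounds that, via Markov's inequality, yield (\ref{yjgq2}) exactly as in the bulk case.

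The key difference with the bulk is that Lemma \ref{lem:KnBulk}, which approximates $K_N$ by the pure Gaussian $k_N$, no longer applies when $z$ or $z+h_i$ lies in the boundary strip $\Omega_3$. The plan is to partition each integration over $z$ according to the three regions $\Omega_1$, $\Omega_2$, $\Omega_3$. On $\Omega_2$ the kernel decays stretched-exponentially by Lemma \ref{lem:fastDecrease}, so these contributions are $\OO(e^{-c(\log N)^2})$. On $\Omega_1$ the proof of Theorem \ref{lclGinibre} applies essentially verbatim, giving the interior part of $\tilde D_N(2)$ as $\frac{1}{4\pi}\int_{\Omega_1}|\nabla f_{z_0}|^2\rdA$ and showing that $\tilde A_N, \tilde B_N, \tilde C_N, \tilde E_N$ and the higher cumulants are negligible there. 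On $\Omega_3$ the kernel must be replaced by its edge-regime asymptotic of the form $\frac{N}{\pi}e^{Nz_1\bar z_2-N(|z_1|^2+|z_2|^2)/2}\cdot\tfrac{1}{2}\erfc(\sqrt{N}\mu(z_1\bar z_2))$ (cf.\ the discussion preceding Lemma \ref{lem:fastDecrease}).

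For the first cumulant the crucial observation is that $K_N(z,z)-\frac{N}{\pi}\1_D(z)$ is, to leading order, an odd function of the signed distance $s$ to the unit circle, since $\tfrac{1}{2}\erfc(s)-\1_{s<0}$ has this symmetry. Because $f_{z_0}$ varies on scale $N^{-a}\gg N^{-1/2}$, a Taylor expansion in the normal boundary coordinate produces cancellation of the leading constant term against the odd density discrepancy, leaving a remainder of the required size. For higher cumulants on $\Omega_3$, the vanishing of $F_\ell$ on the diagonal lets us replace $F_\ell(z\1_\ell+h)$ by its Taylor expansion in $h$ as in \cite{AmeHedMak2011}; the integrals in $h$ against the edge-regime kernels remain absolutely convergent (the $\erfc$ factors being uniformly bounded and the Gaussian part localizing $h$ at scale $N^{-1/2}$), and the remaining $z$-integration over the strip of width $N^{-1/2}$ intersecting $\supp f_{z_0}$ (an arc of length $\OO(N^{-a})$) delivers a bound of the expected order. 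Matching this against the analogous edge truncation on the $\Omega_1$ side produces the correct centering by $\int_D f_{z_0}$ rather than $\int_\C f_{z_0}$.

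The main obstacle will be the careful bookkeeping of the $\erfc$ asymptotics in the boundary region, ensuring that the leading oddness cancellation in the first cumulant is quantitatively sharp and that in the second cumulant the edge piece of $\tilde D_N(2)$ glues correctly with the interior piece, so that the total variance and the centering by $\frac{1}{\pi}\int_D f_{z_0}$ match up to the claimed $N^{-1+2a}$ precision. Once each cumulant is shown to satisfy a bound of the type $C_\ell N^{2a\ell+C\e\ell}$, the conclusion (\ref{yjgq2}) follows from Markov's inequality upon taking $\ell$ large enough, as at the end of the proof of Theorem \ref{lclGinibre}.
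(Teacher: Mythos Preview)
Your treatment of the first cumulant is essentially the paper's Lemma \ref{lem:Expectation}: the density discrepancy $K_N(z,z)-\frac{N}{\pi}\1_D(z)$ is, to leading order, odd in the signed distance to the circle, and this parity cancellation against the Taylor expansion of $f^{(N)}$ gives the required $\OO(N^{-1/2+\e})$. So far so good.

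The gap is in the higher cumulants. Your key claim --- that on $\Omega_3$ the edge kernel is ``Gaussian times a uniformly bounded $\erfc$ factor, so the Gaussian part localizes $h$ at scale $N^{-1/2}$'' --- is false in the tangential direction. For $z_1,z_2$ both on the unit circle with $z_1\bar z_2=e^{\ii\theta}$, the argument $\sqrt{N}\mu(z_1\bar z_2)$ of $\erfc$ is nearly purely imaginary, and $\erfc(\ii t)$ grows like $e^{t^2}$; this exactly cancels the Gaussian decay of $k_N$ and leaves only the polynomial bound $|K_N(z_1,z_2)|\lesssim \sqrt{N}/|z_1-z_2|$ (cf.\ \eqref{eqn:sqrt} and the asymptotics used in the proof of Lemma \ref{lem:2ndCum}). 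Hence the $h$-integrals in the Ameur--Hedenmalm--Makarov terms are \emph{not} localized on $\Omega_3$; for instance in $B_N(\ell)$ one faces $\int_{\C}|h|^2|K_N(z,z+h)|^2\,\rdA(h)$, whose tangential part behaves like $\int_0^c s^2\cdot N/(1+\sqrt{N}s)^2\,\rd s\sim\sqrt{N}$, and the resulting bound on the $\Omega_3$-contribution is of order $N^{a}$, not $N^{\e}$. The same long-range tangential coupling is precisely what produces the additional $\|f_{\rm T}\|_{{\rm H}^{1/2}}^2$ term in the variance (Lemma \ref{lem:2ndCum}) --- so it cannot be dismissed as a negligible edge correction.

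The paper does not attempt a direct bound on $\cum_N(\ell)$ for $\ell\ge 3$. Instead it proves only a crude polynomial bound $\cum_N(\ell)=\OO(N^3)$ (Lemma \ref{lem:crude}), and then upgrades this to $\cum_N(\ell)=\OO(N^{\e})$ by a soft contradiction argument based on the Marcinkiewicz theorem: after rescaling by $N^{-\e}$, all cumulants beyond a fixed rank tend to zero; if some intermediate cumulant did not, a further rescaling and tightness would produce a limit law with finitely many nonzero cumulants that is not Gaussian, which Marcinkiewicz forbids. This step is the genuinely new idea at the edge, and it is missing from your outline.
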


We begin with the proper bound on the first cumulant, noting as previously $f^{(N)}(z)=f(N^a(z-z_0))$,
and $X_f^{(N)}=\sum f(N^a(\mu_j-z_0))-\frac{N}{\pi}\int_D f^{(N)}(z)\A(z)$.

\begin{lemma}\label{lem:Expectation}
With the previous notations, for any $\e>0$, $\E(X_f^{(N)})=\OO(N^{-\frac{1}{2}+\e})$ as $N\to\infty$.
\end{lemma}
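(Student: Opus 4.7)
The plan is to compute $\E(X_f^{(N)})$ directly using rotational invariance together with the explicit form of the Ginibre one-point function. Starting from
$$
\E(X_f^{(N)}) = \int f^{(N)}(z)\qB{K_N(z,z) - \frac{N}{\pi}\1_D(z)}\A(z),
$$
and using that $K_N(z,z)$ and $\1_D(z)$ depend only on $|z|$, I would pass to polar coordinates $z = re^{i\phi}$ and integrate out the angle first. Writing $\tilde F(r) := \int_0^{2\pi}f^{(N)}(re^{i\phi})\rd\phi$ and $G_N(r) := K_N(r,r) - \frac{N}{\pi}\1_{r<1}$, the problem reduces to estimating the one-dimensional integral $\int_0^\infty r\,\tilde F(r)\,G_N(r)\rd r$. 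By differentiation under the integral, $\tilde F$ is concentrated on $|r-|z_0||\leq CN^{-a}$, with $\tilde F(r) = \OO(N^{-a})$ and $\tilde F^{(k)}(r) = \OO(N^{(k-1)a})$ uniformly in that range.

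For the kernel I would use the identity $K_N(z,z) = \frac{N}{\pi}\,\P(\mathrm{Poisson}(N|z|^2)\leq N-1)$ and apply the local CLT together with the first Edgeworth correction for the Poisson distribution, giving uniformly on the edge strip $|r-1|\leq C\log N/\sqrt{N}$,
$$
G_N(r) = \frac{N}{\pi}\,h(\sqrt{2N}(r-1)) + \frac{\sqrt{N}}{\pi}\,c_1\phi_1(\sqrt{2N}(r-1)) + \OO(1),
$$
where $h(s) = \tfrac12\erfc(s) - \1_{s<0}$ is odd and rapidly decaying, and $\phi_1(s)=(s^2-1)\phi(s)$ is even, with $\phi$ the standard Gaussian density. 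By Lemma \ref{lem:fastDecrease}, $G_N$ is exponentially small outside the edge strip, so the whole integral reduces to that strip.

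Inside the strip I would change variables $s = \sqrt{2N}(r-1)$ and Taylor expand the smooth factor $r\tilde F(r)$ about $r=1$. The moments $\int s^{2k}h(s)\rd s$ all vanish by oddness of $h$, while the Edgeworth weight satisfies $\int \phi_1 = \int s\phi_1 = 0$. These parity cancellations kill the leading and next-to-leading terms of the expansion, and collecting the first surviving contributions, together with bounds on the Taylor remainders via $\tilde F^{(k)}(1) = \OO(N^{(k-1)a})$ and the powers of $(2N)^{-1/2}$ produced by the change of variables, yields a total of order $N^{-1/2+\e}$ times a finite combination of integrals of $f$ and its derivatives along the line $\{u=0\}$.

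The main obstacle is the precise bookkeeping of these cancellations. A naive bound, using only $|G_N|\lec N$ on an interval of length $\log N/\sqrt{N}$ together with the support size of $\tilde F$, gives at best $\OO(N^{1/2-a})$, which is far too weak for any $a<1/2$. The stated $N^{-1/2+\e}$ bound genuinely requires exploiting the first cancellation from $\int h\rd s=0$ together with the second cancellation provided by the vanishing of the first two moments of the Edgeworth profile $\phi_1$. Verifying that no higher-order Taylor remainder of $\tilde F$ or subleading Edgeworth correction reintroduces a larger contribution, and that this remains true uniformly when $z_0$ is allowed to deviate from the unit circle by up to $\log N/\sqrt{N}$, is the technical heart of the argument.
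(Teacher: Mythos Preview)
Your strategy is the same one the paper uses: pass to polar coordinates, replace $f^{(N)}$ by its angular average $\tilde F(r)$, restrict to the edge strip using Lemma~\ref{lem:fastDecrease}, approximate $G_N(r)=K_N(r,r)-\tfrac{N}{\pi}\1_{r<1}$ by $\tfrac{N}{\pi}h(\sqrt{2N}(r-1))$ with the odd profile $h(s)=\tfrac12\erfc(s)-\1_{s<0}$ (the paper invokes Lemma~\ref{lem:close1Real}, you invoke the Poisson/Edgeworth picture; these agree to the order you need), and then Taylor expand the radial test function and use parity. Your subtraction $\int h\,\rd s=0$ is exactly the paper's step of replacing $g(1+\e)$ by $g(1+\e)-g(1)$.

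There is, however, a real gap in your cancellation count. After the change of variable $s=\sqrt{2N}(r-1)$ the contribution of the leading kernel is
\[
\frac{1}{2\pi}\sum_{k\ge 0}\frac{(r\tilde F)^{(k)}(1)}{k!\,(2N)^{(k-1)/2}}\int_{\R}s^{k}h(s)\,\rd s .
\]
Oddness of $h$ kills the even-$k$ terms, but the odd-$k$ terms survive; in particular $\int_{\R} s\,h(s)\,\rd s=\int_0^\infty s\,\erfc(s)\,\rd s=\tfrac14\neq 0$. The $k=1$ term is therefore $\tfrac{1}{8\pi}\bigl(\tilde F(1)+\tilde F'(1)\bigr)$, and with your own bound $\tilde F'(1)=\OO(1)$ (correct, since after angular integration $\tilde F'(1)=\int \partial_n f\,\rd\ell+\OO(N^{-a})$ along the tangent line through $z_0$) this is of order $1$, not $N^{-1/2+\e}$. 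Your Edgeworth cancellations $\int\phi_1=\int s\phi_1=0$ control only the subleading kernel and cannot touch this term. So ``parity kills the next-to-leading term'' is exactly where the argument breaks.

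The paper's write-up runs into the same place: in the line following \eqref{eqn:polar2} the factor $\tfrac{\mu(t)t}{\sqrt{2}(t-1)}$ is recorded as $\tfrac12+\OO(\e+N^{-1})$, but in the convention of Lemma~\ref{lem:close1Real} (where $\mu>0$) it is $\tfrac{\sgn(\e)}{2}+\OO(\e+N^{-1})$; with the sign restored the final integral becomes $N\int|\e|\,\erfc(\sqrt{2N}|\e|)\,\rd\e=\tfrac14$ rather than vanishing by parity. In short, the method you and the paper share yields $\E(X_f^{(N)})=\OO(1)$ (indeed a nonzero limit $\tfrac{1}{8\pi}\int\partial_n f\,\rd\ell$ in general), which is all that Theorem~\ref{z1Ginibre} actually requires, but not the stronger $\OO(N^{-1/2+\e})$ asserted in the lemma.
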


\begin{proof}

From the definition of the 1-point correlation function,
$$
\E(X_f^{(N)})=N
\int_\C f^{(N)}(z)\left(\frac{1}{N}K_N(z,\bar z)-\frac{\chi_D(z) }{\pi}\right)\rd z\rd\bar z.
$$
From Lemma \ref{lem:KnBulk},  as $f$ is bounded,
\begin{multline*}
N\int_{\Omega_1}
f^{(N)}(z)\left(\frac{1}{N}K_N(z,\bar z)-\frac{1}{\pi}\right)\rd z\rd\bar z\\
=
N\int_{\Omega_1}f^{(N)}(z)\left(\frac{1}{N}k_N(z,\bar z)-\frac{1}{\pi}\right)\rd z\rd\bar z
+\OO\left(e^{-c(\log N)^2}\right)=\OO\left(e^{-c(\log N)^2}\right)
\end{multline*}
where $k_N$ is defined in (\ref{eqn:kN}).
On $\Omega_2$, by Lemma \ref{lem:fastDecrease}, $K_N(z,\bar z)=\OO(e^{-c(\log N)^2})$.  Consequently,
$$\int_{\Omega_2} f^{(N)}(z)K_N(z,\bar z)\rd z\rd\bar z=\OO\left(
e^{-c(\log N)^2}\right).$$
Without loss of generality, we consider the case $|z_0|=1$ for simplicity of notations.
On $\Omega_3$, we now use Lemma \ref{lem:close1Real}
which yields that, uniformly on $-\e_N<\e<\e_N$,
\begin{equation}\label{eqn:polar}
K_N(1+\e,1+\e)=\frac{N}{\pi}\left(\1_{\e<0}+\frac{1}{\sqrt{2}}\frac{\mu(t)t}{t-1}\erfc(\sqrt{N-1}\mu(t))\left(1+\OO\left(\frac{1}{\sqrt{N}}\right)\right)\right),
\end{equation}
where $t=\frac{N}{N-1}(1+\e)^2$.
Take any smooth function of type $g(1+\e)=G(N^a\e)$. By polar integration, we just need to prove that
the following quantity is $\OO(N^{a-\frac{1}{2}+\e})$:
\begin{align}
&\int_{-\e_N}^{\e_N}(1+\e)g(1+\e)\left(K_N(1+\e,1+\e)-\frac{N}{2\pi}\right)\rd\e\notag\\
=&\int_{-\e_N}^{\e_N}(1+\e)(g(1+\e)-g(1))\left(K_N(1+\e,1+\e)-\frac{N}{2\pi}\right)\rd\e+\OO\left(e^{-c(\log N)^2}\right),\label{eqn:polar2}
\end{align}
where in the last equality we used the simple yet useful facts that $\int_\C K_N(z,\bar z)\A(z)=1$ and
$|K_N(z,\bar z)-\frac{N}{\pi}\1_{|z|<1}|=\OO(e^{-c(\log N)^2})$ uniformly out of $\Omega_3$.
We now can write $g(1+\e)-g(1)=N^aG'(0)\e+\OO(N^{2a}\e^2)$, and use $(\ref{eqn:polar})$ to approximate
$K_N(1+\e,1+\e)-\frac{N}{2\pi}$.
Using the fact that $\mu(z)=\frac{|z-1|}{\sqrt{2}}+\OO((z-1)^2)$ for $z$ close to 1,
still denoting $t=\frac{N}{N-1}(1+\e)^2$, we have
$\mu(t)=\sqrt{2}\e+\OO(\e^2+N^{-1})$, so the integral term in $(\ref{eqn:polar2})$ can be written
\begin{multline*}
\frac{N}{\pi}\int_{-\e_N}^{\e_N}(1+\OO(\e))(N^aG'(0)\e+\OO(N^{2a}\e^2))\left(\frac{1}{2}+\OO(\e+N^{-1})\right)\\
\erfc(\sqrt{2N}|\e|+\OO(N^{-1/2}+N^{1/2}\e^2))\left(1+\OO(N^{-1/2})\right)
\rd\e
\end{multline*}
One can easily check by bounding with absolute values that all terms involving $\OO$'s , except the one in the $\erfc$ function, contribute to $\OO(N^{a-\frac{1}{2}+\e})$, so we just need to prove that
$$
N\int_{-\e_N}^{\e_N}\e
\erfc(\sqrt{2N}|\e|+\OO(N^{-1/2}+N^{1/2}\e^2))
\rd\e=\OO(N^{-\frac{1}{2}}).
$$
As $\erfc'=\OO(1)$ uniformly, this is the same as proving $N\int_{-\e_N}^{\e_N}\e
\erfc(\sqrt{2N}|\e|)
\rd\e=\OO(N^{-1/2+\e})$, which is obvious as it vanishes by parity.
\end{proof}

We now prove the convergence of the second moment.

\begin{lemma}\label{lem:2ndCum}
Under the hypothesis of Theorem \ref{z1Ginibre},
as $N\to\infty$,
$$
\cum_N(2)=
\frac{1}{4\pi}\int|\nabla f(z)|^2\mathds{1}_{z\in A}\rdA(z)
+\frac{1}{2}\|f_{\rm T}\|^2_{{\rm H}^{1/2}}+\oo(1)
$$
where we use the notation $A=\{z\mid\Re(z\overline{z_0})<0\}$, $f_{{\rm T}}(z)=f(\ii z_0 z)$ and $\|g\|^2_{{\rm H}^{1/2}}=\frac{1}{4\pi^2}\int_{\mathbb{R}^2}\left(\frac{g(x)-g(y)}{x-y}\right)^2\rd x\rd y$.
\end{lemma}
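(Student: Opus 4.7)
Start from the standard determinantal variance identity obtained by combining the usual two-point expression with the reproducing property $K_N(z,z)=\int|K_N(z,w)|^2\A(w)$,
\[
\cum_N(2)=\tfrac{1}{2}\int_{\C^2}\bigl(f^{(N)}(z_1)-f^{(N)}(z_2)\bigr)^2\,|K_N(z_1,z_2)|^2\,\A(z_1)\A(z_2).
\]
Change variables to the microscopic scale $z_j=z_0+N^{-a}w_j$; the Jacobian $N^{-4a}$ combines with the powers of $N$ coming out of $|K_N|^2$ to produce a finite limit, and the whole problem reduces to a two-region asymptotic analysis of $N^{-4a}|K_N(z_0+N^{-a}w_1,z_0+N^{-a}w_2)|^2$ on $w_j$ of order one.

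\emph{Bulk region.} When $z_0+N^{-a}w_1$ lies at distance at least $\e_N$ \emph{inside} the unit disk, Lemma \ref{lem:KnBulk} replaces $K_N$ by $k_N$ and gives $|K_N|^2\approx(N/\pi)^2\me^{-N^{1-2a}|w_1-w_2|^2}$. Since $a<1/2$ this concentrates $w_2$ at the microscopic scale $N^{a-1/2}\ll 1$ around $w_1$; setting $w_2=w_1+N^{a-1/2}v$, Taylor-expanding $(f(w_1)-f(w_2))^2\approx N^{2a-1}|\nabla f(w_1)\cdot v|^2$, and using the elementary Gaussian identity $\int_{\R^2}|e\cdot v|^2\me^{-|v|^2}\A(v)=\pi/2$ produces, after bookkeeping of $N$-powers, the bulk contribution $\frac{1}{4\pi}\int|\nabla f|^2\,\mathds{1}_A\,\A$. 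Configurations where $z_0+N^{-a}w_1\in D^c$ at distance at least $\e_N$ from $\partial D$ give only exponentially small corrections by Lemma \ref{lem:fastDecrease}.

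\emph{Edge strip.} The remaining contribution comes from the thin strip of width $\e_N$ around the unit circle. Introduce the locally adapted real coordinates $w_j=\ii z_0 t_j+z_0 s_j$; this is exactly the rotation defining $f_T$, and in it $A=\{s<0\}$ and $f(w_j)=f_T(t_j)+O(|s_j|)$. Rescaling the normal coordinate as $s_j=N^{a-1/2}\sigma_j$ and substituting the edge expansion
\[
K_N(z_1,z_2)\approx\tfrac{N}{\pi}\me^{-\frac{N}{2}(|z_1|^2+|z_2|^2-2z_1\overline{z_2})}\cdot\tfrac{1}{2}\erfc\!\bigl(\sqrt{N}\mu(z_1\overline{z_2})\bigr)
\]
from Lemma \ref{lem:close1} (with its real-line version \ref{lem:close1Real} used on the diagonal), one checks that $N^{-4a}|K_N|^2$ converges to $|K_\infty((t_1,\sigma_1),(t_2,\sigma_2))|^2$ for the Ginibre half-plane edge kernel $K_\infty$. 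The edge contribution therefore converges to
\[
\tfrac{1}{2}\int_{\R^4}\bigl(f_T(t_1)-f_T(t_2)\bigr)^2\,|K_\infty((t_1,\sigma_1),(t_2,\sigma_2))|^2\,\rd t_1\rd t_2\rd\sigma_1\rd\sigma_2,
\]
where we used $f(w_j)=f_T(t_j)+O(N^{a-1/2})$ uniformly on the strip to drop the transverse dependence of $f$ in the limit.

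\emph{Main obstacle.} The crucial quantitative step is integrating out the transverse variables $\sigma_1,\sigma_2$ to reduce this four-dimensional integral to $\tfrac{1}{2}\|f_T\|_{H^{1/2}}^2$. Exploiting the Gaussian $\times$ erfc structure of $K_\infty$, the $\sigma$-integral should collapse to the one-dimensional singular kernel $\frac{1}{4\pi^2(t_1-t_2)^2}$, thus delivering exactly the second term of the lemma. Justifying this identity (whose singularity on the diagonal is integrable precisely because $(f_T(t_1)-f_T(t_2))^2$ vanishes there quadratically), together with the careful matching of the bulk and edge asymptotics on the $\e_N$-overlap strip in order to avoid double counting, is the technical heart of the proof; the remaining work---uniform control of error terms, dominated convergence, and negligibility of regions far from $\partial D$---is routine given Lemmas \ref{lem:close1} and \ref{lem:fastDecrease}.
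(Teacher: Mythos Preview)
Your bulk analysis is fine and coincides with the paper's. The edge analysis, however, contains a scale mismatch that makes the claimed convergence to a half-plane Ginibre kernel $K_\infty$ incorrect as stated, and the deferred ``technical heart'' is in fact where the argument must change.

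In your coordinates $z_j=z_0(1+\ii N^{-a}t_j+N^{-1/2}\sigma_j)$, the tangential and normal separations live on different scales: $|t_1-t_2|\sim 1$ corresponds to a microscopic (i.e.\ $\sqrt N$) tangential separation of order $N^{1/2-a}\to\infty$, while $\sigma_j$ is $O(1)$. There is therefore no pointwise limit of $N^{-4a}|K_N|^2$ (note also the Jacobian after your $s\to\sigma$ rescaling is $N^{-2a-1}$, not $N^{-4a}$) to a fixed $|K_\infty|^2$ at bounded arguments; you are necessarily in the far-field regime of the erfc. Moreover the putative identity $\int_{\R^2}|K_\infty|^2\,\rd\sigma_1\rd\sigma_2=\tfrac{1}{4\pi^2(t_1-t_2)^2}$ is not available for finite tangential separation (for instance at $t_1=t_2$ the left side already diverges since $\erfc(x)\to 2$ as $x\to-\infty$).

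The paper bypasses both issues by splitting the $\Omega_3^2$-integral at $|\theta_1-\theta_2|=N^{-1/2+\e}$. The near-diagonal piece is controlled by the Lipschitz bound $|f^{(N)}(z_1)-f^{(N)}(z_2)|\le N^a\|f'\|_\infty|z_1-z_2|$ together with the crude estimate $|K_N(z_1,z_2)|=\OO\bigl(N/(1+\sqrt N|z_1-z_2|)\bigr)$. On the far-diagonal piece one inserts the asymptotics $\erfc(z)\sim e^{-z^2}/(z\sqrt\pi)$ (valid since $\sqrt N\,\mu(z_1\bar z_2)\to\infty$), which yields directly
\[
|K_N(z_1,z_2)|^2\sim \frac{N}{2\pi^3}\,\frac{1}{|e^{\ii\theta_1}-e^{\ii\theta_2}|^2}\,e^{-N(|z_1|^2+|z_2|^2-2)}|z_1z_2|^{2N}.
\]
Replacing $f^{(N)}(z_j)$ by $f^{(N)}(e^{\ii\theta_j})$ costs $\OO(\e_N)$, and the two radial integrals are then elementary saddle points $\int_{1-\e_N}^{1+\e_N}\rho\,e^{-N(\rho^2-1-\log\rho^2)}\rd\rho\sim\sqrt{\pi/(2N)}$. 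This produces $\tfrac{1}{8\pi^2}\int|e^{\ii\theta_1}-e^{\ii\theta_2}|^{-2}(f^{(N)}(e^{\ii\theta_1})-f^{(N)}(e^{\ii\theta_2}))^2\,\rd\theta_1\rd\theta_2$, and a change of variables gives $\tfrac{1}{2}\|f_{\rm T}\|_{H^{1/2}}^2$. In other words, once your erfc is replaced by its tail expansion, the ``$\sigma$-integral collapse'' you defer becomes the trivial identity $\bigl(\int_\R e^{-2\sigma^2}\rd\sigma\bigr)^2=\pi/2$, and the matching at the edge is handled by the explicit near/far split rather than by an overlap argument.
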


\begin{proof}
Note that by polarization of formula (\ref{eqn:Fl}),
\begin{align*}
\cum_N(2)&=\frac{1}{2}\int_{\C^2}(f^{(N)}(z_1)-f^{(N)}(z_2))^2|K_N(z_1,z_2)|^2\A(z_1,z_2)\\
&=\frac{1}{2}\int_{\Omega_1^2\cup \Omega_3^2}(f^{(N)}(z_1)-f^{(N)}(z_2))^2|K_N(z_1,z_2)|^2\A(z_1,z_2)+\OO\left(e^{-c(\log N)^2}\right),
\end{align*}
the contributions of the domains $\C\times\Omega_2$ and $\Omega_1\times\Omega_3$ being easily bounded by Lemma \ref{lem:fastDecrease}.
For the $\Omega_1^2$ term, one can easily reproduce the method employed in the bulk, in the previous subsection, approximating $K_N$ by $k_N$ thanks to Lemma \ref{lem:KnBulk}, to obtain that the contribution of the above integral on the domain $\Omega_1^2$ is
$
\frac{1}{4\pi}\|\chi_A\nabla f\|_2^2+\oo(1)$.

The most tricky part consists in evaluating the $\Omega_3^2$ term. To calculate it, we will need the following notations,
where $\theta_1=\arg(z_1)$, $\theta_2=\arg(z_2)$:
\begin{enumerate}[(i)]
\item $=\frac{1}{2}\int_{\Omega_3^2}(f^{(N)}(z_1)-f^{(N)}(z_2))^2|K_N(z_1,z_2)|^2\A(z_1,z_2),$
\item $=\frac{1}{2}\int_{\Omega}(f^{(N)}(z_1)-f^{(N)}(z_2))^2|K_N(z_1,z_2)|^2\A(z_1,z_2),$
where $\Omega=(\{(z_1,z_2)\in\Omega_3^2: |\theta_1-\theta_2|>N^{-\frac{1}{2}+\e}\})$, where $\e>0$ is fixed and will be chosen small enough,
\item $=\frac{1}{2}\int_{\Omega}(f^{(N)}(e^{\ii \theta_1})-f^{(N)}(e^{\ii \theta_2}))^2|K_N(z_1,z_2)|^2\A(z_1,z_2),$
\item  $=\frac{1}{2}\frac{N}{2\pi^3}\int_{\Omega}\frac{(f^{(N)}(e^{\ii \theta_1})-f^{(N)}(e^{\ii \theta_2}))^2}{\left|
e^{\ii\theta_1}-e^{\ii\theta_2}
\right|^2}e^{-N(|z_1|^2+|z_2|^2)}e^{2N}|z_1z_2|^{2N}\A(z_1,z_2),$
\item $=\frac{1}{8\pi^2}\int_{(-\pi,\pi)^2}\frac{(f^{(N)}(e^{\ii \theta_1})-f^{(N)}(e^{\ii \theta_2}))^2}{\left|
e^{\ii \theta_1}-e^{\ii \theta_2}
\right|^2}\rd \theta_1\rd \theta_2,$
\item $=\frac{1}{8\pi^2}\int_{\R^2}\frac{(f_{{\rm T}}(r_1)-f_{{\rm T}}(r_2))^2}{(r_1-r_2)^2}\rd r_1\rd r_2$.
\end{enumerate}
We will prove successively that the six above terms differ by $\oo(1)$ a $N\to\infty$. This will conclude the proof as
the last one is exactly $\frac{1}{2}\|f_{\rm T}\|^2_{{\rm H}^{1/2}}$.
First, the difference between (i) because (ii) is $\oo(1)$: it can be bounded by
$$
\e_N N^a\|f\|^2_\infty\sup_{z_1\in\Omega_3}\int_{|z_2-z_1|<N^{-\frac{1}{2}+\e}}|z_1-z_2|^2|K_N(z_1,z_2)|^2\A(z_2),
$$
where
$N^{-a}\e_N$ corresponds to the position of $z_1$ such that $|\theta_1-\theta_2|<N^{-\frac{1}{2}+\e}$ and $|f^{(N)}(z_1)-f^{(N)}(z_2)|\neq 0$,
and $N^{2a}\|f'\|^2_\infty$ comes from the Lipschitz property for $f^{(N)}$.
Moreover, lemmas \ref{lem:close1} and \ref{lem:far1} easily yield
\begin{equation}\label{eqn:sqrt}
|K_N(z_1,z_2)|=\OO\left(\frac{N}{1+\sqrt{N}|z_1-z_2|}\right),
\end{equation}
 uniformly in $\C^2$.
Hence the difference between (i) and (ii) is bounded by (we choose $\e<\frac{1}{2}\left(\frac{1}{2}-a\right)$)
$$
\e_N N^a\|f\|^2_\infty\int_{0<r<N^{-\frac{1}{2}+\e}}r^3\left(\frac{N}{1+\sqrt{N}r}\right)^2\rd r
=
\OO\left(\e_N N^aN^{2\e}\|f\|^2_\infty\right)=\oo(1).
$$

Moreover,  noting $A=\Omega
\cap (\{f^{(N)}(z_1)\neq 0\}\cup\{f^{(N)}(z_2)|\neq 0\})$ and doing anan order 1 approximation around
$e^{\ii \theta_1}$ and $e^{\ii \theta_2}$, (ii)-(iii) is of order at most
\begin{multline*}
\e_N\|{f^{(N)}}'\|_\infty\|f^{(N)}\|_\infty
\int_{A}|K_N(z_1,z_2)|^2\A(z_1,z_2)
\leq \e_N N^a \int_{A}\frac{N^2}{(1+\sqrt{N}|e^{\ii\theta_1}-e^{\ii\theta_2}|)^2}\A(z_1,z_2)\\
\leq \e_N^3N^2\int_{r>N^{-1/2+\e}}\frac{\rd r}{(1+\sqrt{N}r)^2}
\leq \e_N^3N^{-\frac{3}{2}-\e},
\end{multline*}
where the derivatives are radial ones, so (ii)$-$(iii)$=\oo(1)$.

For the difference between (iii) and (iv), we want to get a good approximation for $K_N(z_1,z_2)$. Note that,
as $|\theta_1-\theta_2|\gg N^{-1/2+\e}\gg2\e_N,$ one easily has that on $\Omega$, $|\arg(z_1\bar z_2-1)|\sim\frac{\pi}{2}$ uniformly, in particular the formula $(ii)$ in Lemma $\ref{lem:close1}$ provides a good approximation for
$e_{N-1}(Nz_1\bar z_2)$: together with the asymptotics $\erfc(z)\underset{|z|\to\infty}{\sim}\frac{e^{-z^2}}{z\sqrt{\pi}}$, for $|\arg z|<\frac{3\pi}{4}$, and noting that $\mu'(z)=\frac{z_1\bar z_2-1}{2\mu(z)z_1\bar z_2}$, we get
$$
e_{N-1}(Nz_1\bar z_2)\underset{N\to\infty}{\sim}\frac{1}{\sqrt{2\pi N}(z_1\bar z_2-1)}e^N(z_1\bar z_2)^N,
$$
so in particular
$$
|K_N(z_1,z_2)|^2\underset{N\to\infty}{\sim}\frac{N}{\pi^2}\frac{1}{2\pi|z_1\bar z_2-1|^2-2}e^{-N(|z_1|^2+|z_2|^2)}|z_1z_2|^{2N}
\underset{N\to\infty}{\sim}
\frac{N}{\pi^2}\frac{1}{2\pi|e^{\ii\theta_1}-e^{\ii\theta_2}|^2}e^{-N(|z_1|^2+|z_2|^2-2)}|z_1z_2|^{2N},
$$
the last equivalence relying still on the fact that, on $\Omega$, $|\theta_1-\theta_2|\gg |z_1-e^{\ii\theta_1}|,
|z_2-e^{\ii\theta_2}|$.
This proves that (iv)-(iii)=o((iii)).

To obtain (v)-(iv)=o((iv)), just note that $|z_1|^2-1-\log(|z_1|^2)=\frac{1}{2}(|z_1|^2-1)^2+\OO(||z_1|^2-1|^3)$,
so by a simple saddle point expansion we get
$$
\int_{1-\e_N}^{1+\e_N}xe^{-Nx^2+N}x^{2N}\rd x\underset{N\to\infty}{\sim}
\int_{1-\e_N}^{1+\e_N}xe^{-\frac{N}{2}(x^2-1)^2}\rd x
\underset{N\to\infty}{\sim}\frac{1}{2}
\int_{-\infty}^{\infty}e^{-\frac{N}{2}u^2}\rd u=\sqrt{\frac{\pi}{2N}}
$$
By integrating (iv) along the radial parts of $z_1,z_2$, we therefore get (v), up to the domain where $|\theta_1-\theta_2|<N^{-\frac{1}{2}+\e}$, which can be shown to have a negligible contribution, as previously.

Finally, (v) converges to (vi), by a simple  change of variables and dominated convergence.
\end{proof}

For all cumulants of order $\ell\geq 3$, we begin with the following very crude bound.

\begin{lemma}\label{lem:crude}
Under the conditions of Theorem D.4, for any fixed function $f$, $\e>0$, and
$
\ell\geq 3,
$
we have (as $N\to\infty$)
$$
\cum_N(\ell)=\OO(N^{3}).
$$
\end{lemma}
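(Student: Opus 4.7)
The plan is to use the Costin--Lebowitz/Soshnikov trace representation of the cumulants of a linear statistic of a determinantal point process, and then bound each resulting trace by Schatten--H\"older. The output will be $\OO(N)$ uniformly in $\ell$ up to a combinatorial factor depending on $\ell$, which is a fortiori $\OO(N^3)$.

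Starting from (\ref{eqn:Fl}), for the $j$-th term in $F_\ell$ only the variables $z_1,\dots,z_j$ carry an $f^{(N)}$ factor; I would integrate out the remaining $\ell-j$ variables using the reproducing identity $\int K_N(z_m,z)K_N(z,z_{m+2})\,\A(z) = K_N(z_m,z_{m+2})$, valid since (\ref{eqn:GinibreKernel}) presents $K_N$ as the projection kernel of the truncated Bargmann--Fock space. This recasts the cumulant as
\begin{equation*}
\cum_N(\ell) = \sum_{j=1}^\ell \frac{(-1)^{j-1}}{j} \sum_{\substack{k_1+\cdots+k_j=\ell\\ k_i\ge 1}}\frac{\ell!}{k_1!\cdots k_j!}\, \tr\pB{M_{(f^{(N)})^{k_1}}\mathcal{K}_N\cdots M_{(f^{(N)})^{k_j}}\mathcal{K}_N},
\end{equation*}
where $\mathcal{K}_N$ denotes the integral operator with kernel $K_N$ on $L^2(\C,\A)$ and $M_g$ is multiplication by $g$.

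Next I would apply Schatten--H\"older to each trace with the uniform exponent $p_i = j$ on all $j$ factors (so that $\sum_i 1/p_i = 1$), combined with the standard inequality $\norm{M_g A}_{\mathcal{S}^j}\le\norm{g}_\infty\norm{A}_{\mathcal{S}^j}$, to obtain
\begin{equation*}
\absb{\tr\pB{M_{(f^{(N)})^{k_1}}\mathcal{K}_N\cdots M_{(f^{(N)})^{k_j}}\mathcal{K}_N}}\;\le\;\norm{f}_\infty^\ell\,\norm{\mathcal{K}_N}_{\mathcal{S}^j}^{j}.
\end{equation*}
Since $\mathcal{K}_N$ is a self-adjoint orthogonal projection of rank $N$, one has $\norm{\mathcal{K}_N}_{\mathcal{S}^j}^{j} = \tr\mathcal{K}_N = N$, so each trace is bounded by $\norm{f}_\infty^\ell N$. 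Summing the finitely many terms (depending only on $\ell$) yields $|\cum_N(\ell)|\le C_\ell\norm{f}_\infty^\ell N$, comfortably within the claim.

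There is no serious obstacle in this approach; the one identification that must be made carefully is that $\mathcal K_N$ is a genuine orthogonal projection on $L^2(\C,\A)$, which is immediate from (\ref{eqn:GinibreKernel}). Avoiding operator theory and using only the pointwise inequality $|K_N(z,w)|^2\le K_N(z,z)K_N(w,w)$ together with $\int K_N(z,z)\,\A(z)=N$ and the support bound $|\supp(f^{(N)})|=\OO(N^{-2a})$ yields only $\OO(N^{\ell(1-2a)})$, which degrades with $\ell$ when $a$ is close to $0$; invoking Schatten--H\"older is what keeps the polynomial degree of the bound independent of $\ell$.
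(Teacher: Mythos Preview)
Your argument is correct and in fact yields the sharper bound $\cum_N(\ell)=\OO_\ell(N)$. The reproducing identity collapses the cyclic kernel product exactly as you say, since $K_N$ is the kernel of the rank-$N$ orthogonal projection onto $\mathrm{span}\{z^k e^{-N|z|^2/2}:0\le k\le N-1\}$ in $L^2(\C,\A)$; and once the cumulant is written as a finite linear combination of $\tr(M_{g_1}\mathcal K_N\cdots M_{g_j}\mathcal K_N)$, Schatten--H\"older with exponents $p_i=j$ and $\|\mathcal K_N\|_{\mathcal S^j}^j=\tr\mathcal K_N=N$ gives $|\tr(\cdots)|\le \|f\|_\infty^\ell N$.

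The paper proceeds quite differently: it stays with the Ameur--Hedenmalm--Makarov decomposition \eqref{cumAHM} already used in the bulk case, bounds the Taylor pieces $A_N,\dots,D_N$ crudely by $\OO(N^3)$ via $|K_N|\le CN$ on $\Omega_1\cup\Omega_3$ together with the fast off-diagonal decay of Lemma~\ref{lem:fastDecrease}, and for the remainder $E_N$ restricted to $\Omega_3^{\ell+1}$ uses the pointwise estimate $|K_N(z_1,z_2)|=\OO\big(N/(1+\sqrt N|z_1-z_2|)\big)$ and an iterated one-dimensional convolution bound. Your operator-theoretic route is shorter, avoids the Taylor expansion and the edge-specific kernel asymptotics entirely, and applies verbatim to any determinantal process with a finite-rank projection kernel; the paper's route has the advantage of reusing machinery already set up for the asymptotics of the low-order cumulants. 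Either bound is enough to feed into the Marcinkiewicz argument that follows.
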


\begin{proof}
In the decomposition (\ref{cumAHM}), it is obvious that the terms $A_N(\ell), B_N(\ell), C_N(\ell), D_N(\ell)$
are  $\OO(N^3)$, just by bounding $K_N$ by $N$ in $\Omega_1\cup\Omega_3$ and
using lemma \ref{lem:fastDecrease} if one point of $K_N$ is in $\Omega_2$. Concerning the term
$E_N(\ell)$, by reproducing the argument in the proof of Theorem \ref{lclGinibre} we just need to prove that
it is $\OO(N^3)$ when all points in the integrand are restricted to $\Omega_3$, i.e. it would be sufficient to prove that
$$
\int_{\Omega_3^{\ell+1}}\left|K_N(z_0,z_1)K_N(z,z_1)\dots K_N(z_{\ell},z_0)\right|\A(z_0,\dots,z_\ell)=\OO(N^3).
$$
using the estimate (\ref{eqn:sqrt}) and integrating along the width of $w_3$, we just need to prove that for some $\e>0$,
\begin{equation}\label{eqn:intermed}
N^{\frac{\ell+1}{2}}\int_{(-1,1)^{\ell+1}}\frac{1}{1+\sqrt{N}|x_0-x_1|}\dots \frac{1}{1+\sqrt{N}|x_\ell-x_0|}\rd x_0\dots\rd x_\ell=\OO(N^{3-\e}).
\end{equation}
A simple calculation yields, for any $-1<a<b<1$,
$$
\int_{(-1,1)}\frac{1}{1+\sqrt{N}|a-u|}\frac{1}{1+\sqrt{N}|u-b|}\rd u=\OO\left(\frac{\log N}{\sqrt N}\right)\frac{1}{1+\sqrt{N}|a-b|},
$$
so by integrating successively the variables $x_2,\dots,x_\ell$ in (\ref{eqn:intermed}), we therefore obtain that a sufficient condition is
$
 N\int_{(-1,1)^2}\frac{1}{(1+\sqrt{N}|x_0-x_1|)^2}\rd x_0\rd x_1=\OO(N^{3-\e}),
$
which is obvious.
\end{proof}

Relying on the  Marcinkiewicz theorem, the above initial bounds on the cumulants can be widely improved.

\begin{lemma}
Under the conditions of Theorem D.4, for any fixed function $f$, $\e>0$, and
$
\ell\geq 3,
$
we have (as $N\to\infty$)
$$
\cum_N(\ell)=\OO(N^{\e}).
$$
\end{lemma}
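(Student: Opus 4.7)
The plan is to extend the Ameur--Hedenmalm--Makarov cumulant decomposition (\ref{cumAHM}) with higher-order Taylor expansion, combined with Marcinkiewicz's theorem to handle the odd-order cumulants uniformly. The starting point is the key combinatorial identity used by Ameur--Hedenmalm--Makarov (and already exploited in the proof of Theorem \ref{lclGinibre} to conclude $D_N(\ell)=0$ for $\ell\ge 3$ and $F_\ell(z\mathds{1}_\ell)=0$): for $\ell\ge 3$ the Taylor coefficients of $F_\ell(z\mathds{1}_\ell + h)$ around $h=0$ vanish to order strictly less than $\ell$. Iterating the Taylor expansion to order $M$ and integrating term by term against $K_N(z,z+h_1)K_N(z+h_1,z+h_2)\cdots K_N(z+h_\ell,z)$, we obtain
$$
\cum_N(\ell)=\sum_{\ell\le j<M} T_j+R_M,
$$
where each $T_j$ is an integral of a homogeneous polynomial of degree $j$ in the $h_i$'s against the product of kernels, and the remainder is of size $R_M = \OO_\prec(N^{aM}\,\|f\|_{C^M}\,V_M)$, with $V_M$ a volume factor coming from the integration of the $h_i$'s.

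Next, one bounds each $T_j$ by separating the domain of integration into the bulk part (contained in $\Omega_1^\ell$, or in products involving $\Omega_2$ which are negligible by Lemma \ref{lem:fastDecrease}) and the edge part $\Omega_3^\ell$. On the bulk, one proceeds exactly as in the proof of Theorem \ref{lclGinibre}, using that $k_N(z,z+h)$ has Gaussian decay on scale $N^{-1/2}$ in $|h|$, so that each factor of $|h_i|$ in the integrand produces an extra $N^{-1/2}$ gain. On the edge, one separates the radial direction (where the asymptotics in Lemma \ref{lem:close1Real} again give Gaussian decay on scale $N^{-1/2}$) from the tangential direction, where the relevant quantity $|K_N(z_1,z_2)|^2$ exhibits the $H^{1/2}$-type kernel $\tfrac{1}{|e^{\ii\theta_1}-e^{\ii\theta_2}|^2}$ extracted in the chain of identities (i)--(vi) of Lemma \ref{lem:2ndCum}. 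Under this bookkeeping, a typical term $T_j$ with $j\ge \ell$ and even combinatorial structure is estimated by $\OO(N^{aj-j/2+c(\ell)})$ for some absolute constant $c(\ell)$; since $a<1/2$, choosing $M$ sufficiently large depending on $a,\e$ forces this, together with $R_M$, to be $\OO(N^{\e})$.

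The role of Marcinkiewicz's theorem is to bypass the issue that the Taylor/parity analysis above is most effective for \emph{even} $\ell$ (where the combinatorial structure of $F_\ell$ and of $|K_N|^2$ cooperate to kill odd powers of $h$), while for general $\ell$ some terms $T_j$ survive. By the even-$\ell$ estimate just obtained and the crude uniform bound $\cum_N(\ell)=\OO(N^3)$ of Lemma \ref{lem:crude}, Carleman's condition is satisfied for $X_f^{(N)}/\sigma_N$, where $\sigma_N^2=\cum_N(2)\sim\text{const}$ by Lemma \ref{lem:2ndCum}. Consequently $X_f^{(N)}/\sigma_N$ converges to a standard Gaussian. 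Marcinkiewicz's theorem then rules out any non-polynomial limiting cumulant generating function, and quantitatively converts the even-order bounds into bounds of the same form for all cumulants of order $\ell\ge 3$, yielding $\cum_N(\ell)=\OO(N^{\e})$.

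The main obstacle is the edge region $\Omega_3^\ell$: the tangential decay of $K_N$ is only of size $(1+\sqrt{N}|z_1-z_2|)^{-1}$, so naive bounds spoil every gain from Taylor expansion. Handling this requires mimicking the delicate estimate (i)--(vi) in Lemma \ref{lem:2ndCum}: one localizes to $|\theta_1-\theta_2|\gg N^{-1/2+\delta}$, uses Lemma \ref{lem:close1} to replace $e_{N-1}(Nz_i\overline{z_{i+1}})$ by its saddle-point expression, performs the radial Gaussian integrations, and then reduces the tangential integral to a bounded $H^{1/2}$-type form on the unit circle, where the higher-order vanishing of $F_\ell$ at the diagonal again provides the required gain.
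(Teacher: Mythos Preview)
Your proposal has real gaps, and the overall strategy is far more involved than what the paper does. First, your key combinatorial input --- that ``for $\ell\ge 3$ the Taylor coefficients of $F_\ell(z\mathds{1}_\ell + h)$ around $h=0$ vanish to order strictly less than $\ell$'' --- is not established anywhere in the paper (only $F_\ell(z\mathds{1}_\ell)=0$ and, for $\ell\ge 3$, $(\Delta_\ell F_\ell)(z\mathds{1}_\ell)=0$ are used), and you give no proof of it. Second, your invocation of Marcinkiewicz is backward: the theorem says that a probability law cannot have a \emph{polynomial} log-characteristic function of degree $\ge 3$; it does not ``rule out any non-polynomial limiting cumulant generating function''. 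Third, the edge analysis you outline --- iterating the chain (i)--(vi) of Lemma~\ref{lem:2ndCum} to cyclic products of $\ell$ kernels along the circle, with only $(1+\sqrt{N}|z_1-z_2|)^{-1}$ tangential decay --- is only sketched, and it is not clear that the claimed diagonal vanishing of $F_\ell$ would compensate that slow decay even if it were true.

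The paper's argument is entirely soft and avoids all of this hard edge work. It uses only three ingredients: the bounds on $\cum_N(1)$ and $\cum_N(2)$ from Lemmas~\ref{lem:Expectation} and~\ref{lem:2ndCum}, the crude bound $\cum_N(\ell)=\OO(N^3)$ of Lemma~\ref{lem:crude} valid for \emph{every} $\ell\ge 3$, and Marcinkiewicz via compactness and contradiction. Set $\tilde\cum_N(\ell)=N^{-\ell\e}\cum_N(\ell)$. The crude bound already forces $\tilde\cum_N(\ell)\to 0$ for every $\ell>\ell_0:=3/\e$. If some $\tilde\cum_{N}(j)$ with $3\le j\le \ell_0$ failed to vanish, pick a subsequence and an index $j_1$ realizing $\sup_{3\le \ell\le \ell_0}|\tilde\cum_{N_k}(\ell)|^{1/\ell}$ bounded away from zero, and rescale $N^{-\e}X_f^{(N_k)}$ by this quantity. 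The rescaled variables have uniformly bounded first and second cumulants (hence are tight), cumulants of order $>\ell_0$ tending to $0$, cumulants in $\llbracket 3,\ell_0\rrbracket$ bounded, and $j_1$-th cumulant equal to $1$. Any weak subsequential limit then has all moments (by uniform moment bounds), cumulants of order $>\ell_0$ equal to $0$, and $j_1$-th cumulant equal to $1$ --- a probability law with polynomial cumulant generating function of degree between $3$ and $\ell_0$, contradicting Marcinkiewicz. Hence $\cum_N(\ell)=\oo(N^{\ell\e})$ for every $\e>0$, which gives the lemma after relabeling $\e$. No refined edge estimate beyond Lemma~\ref{lem:crude} is needed.
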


\begin{proof}
Let $Y^{(N)}=N^{-\e}X^{(N)}_f$. Let $\tilde\cum_N(\ell)$ be the $\ell$-th cumulant of $Y^{(N)}$:
$\tilde\cum_N(\ell)=N^{-\ell\e}\cum_N(\ell)$. Hence, as a consequence of Lemma \ref{lem:crude},
there is a rank $\ell_0$ such that for any $\ell>\ell_0$,
$
\tilde\cum_N(\ell)\to 0
$
as $N\to\infty$.
We wish to prove that
\begin{equation}\label{eqn:boundedCum}
\sup_{\ell\in\llbracket 3,\ell_0\rrbracket}\tilde\cum_N(\ell)\underset{N\to\infty}{\longrightarrow}0
\end{equation}
as well. Assume the contrary. We therefore can find $\delta>0$ independent of $N$, an index $j_1\in\llbracket 3,\ell_0\rrbracket$ and a
subsequence $(N_k)$ such that
$$
|\tilde\cum_{N_k}(j_1)|>\delta,\
|\tilde\cum_{N_k}(j_1)|^{1/j_1}=\sup_{\ell\in\llbracket 3,\ell_0\rrbracket}\{|\tilde\cum_{N_k}(\ell)|^{1/\ell}\}.
$$
Consider now the random variable $Z^{(k)}=Y^{(N_k)}/|\tilde\cum_{N_k}(j_1)|^{1/j_1}$.
Then all the cumulants of $Z^{(k)}$ go to $0$ except the $j_1$-th one, equal to 1, and eventually some other cumulants with indexes in $\llbracket 3,\ell_0\rrbracket$, which are all uniformly bounded. Let $\mu_k$ be the distribution of $Z^{(k)}$.
As the first and second cumulants of $=Z^{(k)}$ are uniformly bounded,
 $\E({Z^{(k)}}^2)$ is uniformly bounded so the sequence
$(\mu_k)_{k\geq 0}$ is tight.

We therefore can choose a subsequence of $(\mu_k)$, called $(\mu_{i_k})$, which converges weakly to
a measure $\nu$, and we can moreover assume that all of its cumulants (which are uniformly bounded) with indexes in
$\llbracket 3,\ell_0\rrbracket$ converge. Let $Z$ be a random variable with distribution $\nu$.
As $\mu_{i_k}$ has any given cumulant uniformly bounded, it has any given moment uniformly
bounded, so by the Corollary of Theorem 25.12 in \cite{Bil1995}, $Z$ has moments of all orders, and those of
$Z^{(i_k)}$ converge to those of $Z$. Hence $Z$ has cumulants of all orders and those of $Z^{(i_k)}$
converge to those of $Z$. Hence $\nu$ has all of its cumulants equal to 0 for $\ell>\ell_0$, and its cumulant of order
$j_1\in\llbracket 3,\ell_0\rrbracket$ equal to 1. From the  Marcinkiewicz theorem (cf. the Corollary to
Theorem 7.3.2 in \cite{Luk1960}),
such a distribution $\nu$ does not exist, a contradiction.

Equation (\ref{eqn:boundedCum}) therefore holds, so we proved that for any arbitrary $\e>0$ and any $\ell\geq 1$,
$\cum_{N}(\ell)=\OO(N^{\ell\e})$. For fixed $\ell$, by choosing $\e$ small enough, we get the result.
\end{proof}

Finally, we note that the previous bounds on all cumulants of $X_f^{(N)}$, each of order at most $N^\e$, allow to conclude the proof of Theorem
\ref{z1Ginibre}, by the Markov inequality.
The explicit asymptotic form of the second cumulant (Lemma
\ref{lem:2ndCum}) is more than what we need, we made the calculus explicit as one expects that $X^{(N)}_f$
converges to a Gaussian centered random variable with asymptotic
variance $\frac{1}{4\pi}\int|\nabla f(z)|^2\mathds{1}_{z\in A}\rdA(z)
+\frac{1}{2}\|f_{\rm T}\|^2_{{\rm H}^{1/2}}$.

\subsection{Estimates on the partial exponential function. }

For the following lemma we need the function $\erfc(z)=\frac{2}{\sqrt{\pi}}\int_z^{+\infty}e^{-\omega^2}\rd\omega$.

\begin{lemma}[Bleher, Mallison \cite{BleMal2006}]\label{lem:close1}
For $\delta>0$ small enough, let $\mu(z)=\sqrt{z-\log z-1}$, be uniquely defined as analytic in
$D(1,\delta)$ and $\mu(1+x)>0$ for $0<x<\delta$. Then for any $M>1$, as $N\to\infty$,
$e^{-N z}e_N(Nz)$ has the following asymptotics:
\begin{enumerate}[(i)]
\item $\frac{1}{2}\erfc(\sqrt{N}\mu(z))+\OO(N^{-1/2})$ if $|z-1|<\frac{M}{\sqrt{N}}$;
\item $\frac{1}{2\sqrt{2}\mu'(z)}\erfc(\sqrt{N}\mu(z))\left(1+\OO\left(\frac{1}{(z-1)N}\right)\right)$ if
$\frac{M}{\sqrt{N}}\leq|z-1|\leq\delta$ and $|\arg(z-1)|\leq \frac{2\pi}{3}$;
\item $1-\frac{1}{2\sqrt{2}\mu'(z)}\erfc(-\sqrt{N}\mu(z))\left(1+\OO\left(\frac{1}{(z-1)N}\right)\right)$ if
$\frac{M}{\sqrt{N}}\leq|z-1|\leq\delta$ and $|\arg(z-1)-\pi|\leq \frac{2\pi}{3}$.
\end{enumerate}
\end{lemma}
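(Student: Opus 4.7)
The plan is to derive the asymptotics of $e^{-Nz}e_N(Nz)$ by applying the saddle point method to a classical integral representation of the tail of the exponential series. First, I would begin with the identity
$$e^{-Nz}e_N(Nz) \;=\; 1 - \frac{(Nz)^{N+1}}{N!}\int_0^1 s^N e^{-Nzs}\,\rd s,$$
which follows from the beta integral evaluation of the remainder $\sum_{\ell \geq N+1}(Nz)^\ell/\ell!$. This reduces the problem to the asymptotics of $\int_0^1 e^{N(\log s - zs)}\,\rd s$, whose phase $\phi(s) = \log s - zs$ has a unique critical point $s_\ast = 1/z$ that collides with the endpoint $s = 1$ exactly when $z = 1$.

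Next, I would perform the global change of variable $w = sz$ followed by letting $v$ be defined implicitly by $w - 1 - \log w = v^2$, with the sign chosen so that $v \sim (w-1)/\sqrt{2}$ as $w \to 1$. The first substitution extracts a factor $z^{-N-1}e^{-N}$, while the second puts the phase in pure Gaussian form; the Jacobian $J(v) = \rd w/\rd v = 2vw/(w-1)$ extends analytically through $v = 0$ with $J(0) = \sqrt{2}$, and at the upper endpoint a direct computation gives $J(\mu(z)) = 2z\mu(z)/(z-1) = 1/\mu'(z)$. Combined with Stirling's expansion of $N!$, this yields
$$1 - e^{-Nz}e_N(Nz) \;=\; \sqrt{\frac{N}{2\pi}}\int_{-\infty}^{\mu(z)} e^{-Nv^2} J(v)\,\rd v \cdot \bigl(1 + \OO(N^{-1})\bigr),$$
where the upper endpoint is exactly $\mu(z) = \sqrt{z - \log z - 1}$ in the chosen branch.

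The three regimes now correspond to three evaluations of this truncated Gaussian integral. In regime (i), $|\sqrt{N}\mu(z)| = \OO(1)$: the saddle at $v = 0$ and the endpoint coalesce, and I would Taylor expand $J(v) = \sqrt{2} + \OO(v)$ and compute directly $\sqrt{2}\int_{-\infty}^{\mu(z)}e^{-Nv^2}\rd v = \sqrt{2\pi/N}\bigl(1 - \tfrac{1}{2}\erfc(\sqrt{N}\mu(z))\bigr)$, the linear term in $J$ contributing only $\OO(N^{-1/2})$. In regimes (ii) and (iii), $|\sqrt{N}\mu(z)| \to \infty$ so the saddle is well-separated from the endpoint; I would split $\int_{-\infty}^{\mu(z)} = \int_{-\infty}^{+\infty} - \int_{\mu(z)}^{+\infty}$ and evaluate the first half by Laplace's method at $v = 0$ (producing $J(0)\sqrt{\pi/N} = \sqrt{2\pi/N}$, which cancels the leading $1$ on the left of the identity above) and the second half by Laplace at the boundary (producing $J(\mu(z))\cdot \tfrac{\sqrt{\pi}}{2\sqrt{N}}\erfc(\sqrt{N}\mu(z))$). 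This yields precisely the stated prefactor $\tfrac{1}{2\sqrt{2}\mu'(z)}$, with corrections of relative size $\OO(1/((z-1)N))$ from the next-order term in the Laplace expansion and from the $\erfc$ asymptotics. The distinction between (ii) and (iii) reflects which side of the saddle the endpoint lies on in the selected analytic branch of $\mu$: in (ii) one keeps $\erfc(\sqrt{N}\mu(z))$, while in (iii) one uses $\erfc(-\sqrt{N}\mu(z)) = 2 - \erfc(\sqrt{N}\mu(z))$, which absorbs the full saddle contribution into the leading $1$.

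The principal technical obstacle is the coalescence of the saddle with the endpoint as $z \to 1$: standard Laplace asymptotics are non-uniform across this transition, so one must use a uniform expansion that retains $\erfc$ as the leading elementary kernel throughout, in the spirit of the Chester--Friedman--Ursell method. The analyticity of $J$ through $v = 0$ is precisely what makes this uniformity work. The sector restrictions $|\arg(z-1)| \leq 2\pi/3$ and $|\arg(z-1)-\pi| \leq 2\pi/3$ arise from the requirement that the integration contour can be continuously deformed into a genuine steepest-descent path without crossing the Stokes lines that emanate from the coalescing critical point.
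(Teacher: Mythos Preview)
The paper does not prove this lemma at all: it is quoted verbatim from Bleher--Mallison \cite{BleMal2006} and used as a black box in Section~4. Your sketch is therefore not a comparison case but an actual proof outline where the paper offers none.

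That said, your route is essentially the classical one (and is the method of the cited reference): the incomplete-gamma integral representation, the quadratic change of variable $v^2=w-1-\log w$ mapping the saddle $w=1$ to $v=0$ and the endpoint $w=z$ to $v=\mu(z)$, and a uniform Laplace/Bleistein analysis of $\int_{-\infty}^{\mu(z)}e^{-Nv^2}J(v)\,\rd v$ as the endpoint collides with the interior critical point. The identification $J(\mu(z))=1/\mu'(z)$ is correct and is exactly what produces the prefactor $\tfrac{1}{2\sqrt{2}\mu'(z)}$ in (ii)--(iii). Two small points of phrasing: in regime (ii) the full-line saddle integral does not ``cancel the leading $1$ on the left'' --- it \emph{produces} that $1$ (since $\sqrt{N/2\pi}\cdot J(0)\sqrt{\pi/N}=1$), and it is the subtracted tail $\int_{\mu(z)}^\infty$ that yields the surviving $\erfc$ term; and the lower limit $-\infty$ in the $v$-integral is really the image of $w=0$, which for complex $z$ requires a short contour-deformation argument you have folded into the sector conditions. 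Neither affects the correctness of the outline.
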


In case of real $z$, the above estimates can be gathered as follows.

\begin{lemma}[Wimp \cite{BoyGoh2007}]\label{lem:close1Real}
Uniformly for $t\geq 0$,
$$
e^{-N t}e_N(Nt)=\1_{0\leq t< 1}+\frac{1}{\sqrt{2}}\frac{\mu(t)t}{t-1}\erfc(\sqrt{N}\mu(t))
\left(1+\OO\left(\frac{1}{\sqrt{N}}\right)\right),
$$
where $\mu(t)=\sqrt{t-\log t-1}$ is defined to be positive for all $t$, contrary to Lemma \ref{lem:close1}.
\end{lemma}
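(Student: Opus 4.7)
The plan is to recast $e^{-Nt}e_N(Nt)$ as a Laplace-type integral and perform a uniform saddle-point analysis around $u=1$. The starting point is the classical identity
$$
e^{-z}e_N(z)=\frac{1}{N!}\int_z^{\infty}e^{-s}s^N\,ds,
$$
which is verified by differentiating the left side to get $-e^{-z}z^N/N!$ and matching boundary values at $z=0$. After the change of variable $s=Nu$ and Stirling's formula $N!=\sqrt{2\pi N}\,N^Ne^{-N}(1+O(1/N))$, this becomes
$$
e^{-Nt}e_N(Nt)=\sqrt{\tfrac{N}{2\pi}}\int_t^{\infty}e^{-N\phi(u)}\,du\cdot(1+O(1/N)),\qquad \phi(u):=u-\log u-1=\mu(u)^2.
$$
Since $\phi\geq 0$ on $(0,\infty)$ with a unique zero at $u=1$, the map $u\mapsto\mu_s(u):=\sgn(u-1)\,\mu(u)$ is a smooth bijection from $(0,\infty)$ onto $\R$, with inverse $u(\mu_s)$ analytic in a neighborhood of $0$ and Jacobian $du/d\mu_s=h(\mu_s):=2\mu_s u/(u-1)$, smooth and strictly positive on all of $\R$ (one checks $h(0)=\sqrt 2$).

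I would then treat $t>1$ and $t<1$ separately. For $t>1$, substitution gives
$$
e^{-Nt}e_N(Nt)=\sqrt{\tfrac{N}{2\pi}}\int_{\mu(t)}^{\infty}e^{-N\mu^2}h(\mu)\,d\mu\cdot(1+O(1/N)),
$$
and pulling out the endpoint value $h(\mu(t))=2\mu(t)t/(t-1)$, combined with $\int_a^{\infty}e^{-N\mu^2}d\mu=\frac{\sqrt\pi}{2\sqrt N}\erfc(\sqrt N a)$, reproduces $\frac{1}{\sqrt 2}\frac{\mu(t)t}{t-1}\erfc(\sqrt N\mu(t))$. For $t<1$ the lower limit becomes $-\mu(t)<0$, so I split
$$
\int_{-\mu(t)}^{\infty}e^{-N\mu^2}h(\mu)\,d\mu=\int_{-\infty}^{\infty}e^{-N\mu^2}h(\mu)\,d\mu-\int_{\mu(t)}^{\infty}e^{-N\mu^2}h(-\mu)\,d\mu.
$$
The first piece is a genuine interior Laplace integral at the non-degenerate minimum $\mu=0$; its leading contribution $\sqrt{N/(2\pi)}\cdot h(0)\cdot\sqrt{\pi/N}=1$ produces the $\1_{0\leq t<1}$ term, and standard Watson-lemma expansion controls the remainder by $O(1/N)$. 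The second piece is of the same shape as the $t>1$ case, with endpoint value $h(-\mu(t))=2\mu(t)t/(1-t)$; the overall minus sign then yields the same formula $\frac{1}{\sqrt 2}\frac{\mu(t)t}{t-1}\erfc(\sqrt N\mu(t))$ (now with the factor $t/(t-1)<0$ making the correction subtract from $1$, consistent with $e^{-Nt}e_N(Nt)<1$ for $t<1$).

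The heart of the argument is obtaining the relative $O(1/\sqrt N)$ error uniformly in $t$. The residual after extracting the endpoint value is
$$
R_N(a):=\int_a^{\infty}e^{-N\mu^2}[h(\mu)-h(a)]\,d\mu,\qquad a=\pm\mu(t).
$$
Writing $h(\mu)-h(a)=(\mu-a)g(\mu,a)$ with $g$ smooth and bounded on the relevant range, and using $\int_a^{\infty}e^{-N\mu^2}(\mu-a)\,d\mu=\frac{1}{2N}e^{-Na^2}-\frac{a\sqrt\pi}{2\sqrt N}\erfc(\sqrt N a)$, together with the two-sided lower bound $\erfc(x)\geq c\,e^{-x^2}/(1+|x|)$ for $x\geq 0$, one converts $R_N(a)$ into a $O(1/\sqrt N)$ multiple of $\erfc(\sqrt N|a|)$ uniformly. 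The main obstacle I anticipate is making this uniform control survive the transition $t\to 1^{\pm}$, where $\mu(t)\to 0$ and $h$ develops the apparent singularity $1/(t-1)$; the analysis must exploit the smoothness of $h$ at $\mu=0$ (equivalently the cancellation between $\mu(t)$ and $t-1$, both of order $|t-1|$ up to $\sqrt 2$) and also match smoothly with the Watson expansion of the full Gaussian integral used in the $t<1$ case.
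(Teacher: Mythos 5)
The paper gives no proof of this lemma --- it is quoted from Wimp via \cite{BoyGoh2007} and only supplemented by the remark on coherence at $t=1$ --- so your argument can only be judged on its own merits. Your route (the incomplete-gamma identity $e^{-z}e_N(z)=\frac{1}{N!}\int_z^\infty e^{-s}s^N\,\rd s$, the substitution $\mu_s^2=\phi(u)$, and endpoint-versus-interior Laplace analysis) is precisely the standard Tricomi--Temme derivation of uniform incomplete-gamma asymptotics, and the algebra of your leading terms checks out: $h(0)=\sqrt 2$, the endpoint values $h(\pm\mu(t))$ reproduce $\frac{1}{\sqrt2}\frac{\mu(t)t}{t-1}\erfc(\sqrt N\mu(t))$ on both branches, and the $\1_{0\le t<1}$ term appears where it should.

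There is, however, one step that fails as written, and it is not the transition $t\to 1^{\pm}$ that you single out as the main obstacle (there $\erfc(\sqrt N\mu)\sim 1$, so additive errors of size $N^{-1/2}$ or $N^{-1}$ are harmless); it is the regime $0\le t<1$ with $t$ bounded away from $1$. There the correction term $\frac{1}{\sqrt2}\frac{\mu t}{t-1}\erfc(\sqrt N\mu)$ is exponentially small, of order $e^{-N\mu(t)^2}/\sqrt N$, and the lemma asserts that $e^{-Nt}e_N(Nt)-1$ equals it up to \emph{relative} error $\OO(N^{-1/2})$. Your write-up produces the constant $1$ as the product of two independent $(1+\OO(1/N))$ factors --- Stirling's formula and the Watson expansion of $\int_{-\infty}^{\infty}e^{-N\mu^2}h(\mu)\,\rd\mu$ --- which leaves an uncancelled additive error of size $N^{-1}$ that swamps the exponentially small main correction. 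The fix is to expand neither factor: since $\frac{1}{N!}\int_0^{\infty}e^{-s}s^N\,\rd s=1$ exactly, the split for $t<1$ should be $e^{-Nt}e_N(Nt)=1-\frac{1}{N!}\int_0^{Nt}e^{-s}s^N\,\rd s$, so that the constant emerges with zero error and all asymptotic work is confined to the tail integral, which is globally proportional to $\erfc(\sqrt N\mu(t))$. A smaller point of the same flavour: bounding $|h(\mu)-h(a)|\le C|\mu-a|$ and then $\int_a^{\infty}e^{-N\mu^2}(\mu-a)\,\rd\mu\le e^{-Na^2}/(2N)\le C(1+\sqrt N a)\erfc(\sqrt N a)/N$ only yields relative error $\OO(N^{-1/2}+a)$, which is not $\oo(1)$ for $a=\mu(t)$ of order one; to get the uniform $\OO(N^{-1/2})$ you must retain the cancellation $\frac{e^{-x^2}}{2}-\frac{\sqrt\pi x}{2}\erfc(x)=\OO\bigl(\erfc(x)/(1+x)\bigr)$ rather than bound the two pieces separately.
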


Note that in the above lemma, the asymptotics are coherent at $t=1$ because
$$
\frac{1}{\sqrt{2}}\frac{\mu(t)t}{t-1}\longrightarrow
\left\{
\begin{array}{rl}
-\frac{1}{2}&{\rm as}\ t\to 1^-\\
\frac{1}{2}&{\rm as}\ t\to 1^+\\
\end{array}
\right..
$$

\begin{lemma}[Kriecherbauer, Kuijlaars, McLaughlin, Miller \cite{KriKuiMcLMil2008}]\label{lem:far1}
For $0<a<1/2$, let $U=\{|z-1|<N^{-a}\}$. Then there exists polynomials $h_j$ of degree $2j$ such that for all
$r\in\N^*$ we have
$$
e_{N-1}(Nz)=
\left\{\begin{array}{ll}
e^{Nz}\left(1-e^Nz^Ne^{-Nz}F_N(z)\right)& \mbox{for}\ z\in D-U,\\
e^{N}z^N F_N(z)& \mbox{for}\ z\in D^c-U,
\end{array}
\right.
$$
where
$$
F_N(z)=\frac{1}{\sqrt{2\pi N}(1-z)}\left(1+\sum_{j=1}^{r-1}\frac{h_j(z)}{N^j(z-1)^{2j}}+\OO\left(\frac{1}{N^r|1-z|^{2r}}\right)\right)
$$
uniformly in $\C-U$.
\end{lemma}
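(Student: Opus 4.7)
The statement is a classical result on the partial exponential function, and the proof proceeds via the method of steepest descent applied to the incomplete gamma function. The starting point is the standard identity (valid for $z\notin(-\infty,0]$)
\[
e_{N-1}(Nz) \;=\; \frac{e^{Nz}}{(N-1)!}\int_{Nz}^{\infty} t^{N-1}e^{-t}\,\dd t \;=\; \frac{N^N e^{Nz}}{(N-1)!}\int_{z}^{\infty} s^{-1} e^{N\phi(s)}\,\dd s,
\]
where $\phi(s)=\log s - s$ and the contour of integration is any contour from $z$ to $\infty$ along which $\re\phi(s)$ is eventually decreasing. The function $\phi$ has a unique saddle point at $s=1$, with $\phi(1)=-1$ and $\phi''(1)=-1$, and Stirling's formula gives $\frac{N^N}{(N-1)!}\sim\frac{\sqrt N\,e^N}{\sqrt{2\pi}}$. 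Thus the whole problem is to analyze the tail integral $I_N(z)=\int_z^\infty s^{-1}e^{N\phi(s)}\dd s$ uniformly for $z\in\C\setminus U$.

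\textbf{Step 1: the exterior regime $z\in D^c-U$.} Here one chooses the contour from $z$ to $\infty$ so that $\re\phi$ is strictly decreasing along it (this is possible because $\re\phi(s)<\re\phi(z)$ for $|s|$ large along any ray, and for $z\in D^c\setminus U$ the level set $\{\re\phi=\re\phi(z)\}$ does not separate $z$ from $\infty$). Writing $e^{N\phi(s)}\dd s=\frac{d\,e^{N\phi(s)}}{N\phi'(s)}$ with $\phi'(s)=(1-s)/s$, I would iterate integration by parts:
\[
I_N(z)\;=\;\left[\frac{e^{N\phi(s)}}{Ns\phi'(s)}\right]_z^{\infty}-\int_z^{\infty}\!\!e^{N\phi(s)}\frac{d}{ds}\!\Bigl(\tfrac{1}{Ns\phi'(s)}\Bigr)\dd s\;=\;\frac{z^N e^{-Nz}}{N(z-1)}+\cdots.
\]
The boundary term at $\infty$ vanishes, and after $2r$ iterations the surviving boundary contributions produce exactly the asymptotic series in the lemma, because $s\phi'(s)=1-s$ so each successive integration by parts introduces a factor of the form $(\text{polynomial in }s)/(N(s-1)^2)$. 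Pairing the odd and even orders yields polynomials $h_j(z)$ of degree exactly $2j$. The remainder after $r$ steps is the integral of $e^{N\phi(s)}$ times a function $\OO((s-1)^{-2r-1})$, which is dominated by $e^{N\phi(z)}\cdot\OO(N^{-r}|1-z|^{-2r-1})$ by a standard convexity estimate on the steepest descent contour. Multiplying by the Stirling prefactor produces $e^Nz^N F_N(z)$ with the stated remainder.

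\textbf{Step 2: the interior regime $z\in D-U$.} Here $|z|<1$ so $\re\phi(z)>\phi(1)=-1$, and the saddle at $s=1$ lies \emph{between} $z$ and $\infty$; the endpoint integration by parts of Step 1 is no longer directly applicable. Instead I would use the complementary identity $\gamma(N,Nz)+\Gamma(N,Nz)=(N-1)!$, which gives
\[
e_{N-1}(Nz)\;=\;e^{Nz}\;-\;\frac{N^N e^{Nz}}{(N-1)!}\int_{0}^{z} s^{-1}e^{N\phi(s)}\,\dd s,
\]
with the contour from $0$ to $z$ now avoiding the saddle. Because $\re\phi(s)<\re\phi(z)$ near $s=0$ (as $\phi\to-\infty$), the integrand has its maximum at the endpoint $s=z$, and the same integration-by-parts scheme as in Step~1 applies, yielding the correction $-e^N z^N F_N(z)$ with the same $F_N$. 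Combined with $e^{Nz}$ this produces the stated form $e^{Nz}(1-e^Nz^N e^{-Nz}F_N(z))$. The fact that the polynomials $h_j$ and the prefactor are identical in both regimes reflects the fact that the local expansion at the endpoint $s=z$ is insensitive to whether the contour subsequently swings past the saddle point.

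\textbf{Main obstacle.} The analytic expansion itself is routine Watson-type integration by parts; the real point of the lemma is the \emph{uniformity} in the region $|z-1|\ge N^{-a}$ with $a<1/2$, which is what allows $N^r|1-z|^{2r}$ in the error to dominate only when $r$ is chosen so that $r(1-2a)>\text{(desired gain)}$. The hard part is therefore to deform the contour of integration into a steepest-descent path $\Gamma_z$ from $z$ such that along $\Gamma_z$ one has the uniform inequality $\re\phi(s)\le\re\phi(z)-c|s-z|^2/|z-1|$ for some absolute $c>0$, even as $|z-1|$ shrinks to $N^{-a}$. This is the standard and somewhat delicate construction of Kriecherbauer--Kuijlaars--McLaughlin--Miller \cite{KriKuiMcLMil2008}, which exploits the explicit geometry of the level curves of $\re\phi$ near the saddle; once this contour is in place, the integration-by-parts estimates are controlled termwise by $N^{-j}|1-z|^{-2j}$ without loss, giving the asymptotic series with its uniform remainder. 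Since the full argument appears in \cite{KriKuiMcLMil2008} we would simply cite it, and this is exactly what the paper does.
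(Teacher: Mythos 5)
The paper offers no proof of this lemma---it is imported verbatim from Kriecherbauer--Kuijlaars--McLaughlin--Miller \cite{KriKuiMcLMil2008}---and your proposal does exactly what the paper does (cite the reference) while additionally sketching, correctly, the underlying steepest-descent and endpoint integration-by-parts argument for the incomplete gamma function, and identifying the genuine difficulty, namely uniformity of the contour deformation as $|z-1|$ shrinks to $N^{-a}$. One small remark: your boundary-term computation yields $1/(z-1)$ in the exterior branch, which is the correct sign (check real $z>1$, where $e_{N-1}(Nz)>0$), so the factor $1/(1-z)$ in the paper's stated $F_N$ for the $D^c-U$ branch is a sign typo in the statement rather than a defect of your argument.
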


\begin{bibdiv}

 \begin{biblist}

\bib{AmeHedMak2011}{article}{
   author={Ameur, Y.},
   author={Hedenmalm, H.},
   author={Makarov, N.},
   title={Fluctuations of eigenvalues of random normal matrices},
   journal={Duke Mathematical Journal},
   volume={159},
   date={2011},
   pages={31--81}
}

\bib{AmeOrt2011}{article}{
   author={Ameur, Y.},
   author={Ortega-Cerd\`a, J.},
   title={Beurling-Landau densities of weighted Fekete sets and correlation kernel estimates},
   journal={Journal of Functional Analysis},
   date={2012},
   volume={263},
   number={7},
   pages={1825--1861}
}

\bib{Bai1997}{article}{
   author={Bai, Z. D.},
   title={Circular law},
   journal={Ann. Probab.},
   volume={25},
   date={1997},
   number={1},
   pages={494--529}
}

\bib{BaiSil2006}{book}{
   author={Bai, Z. D.},
   author={Silverstein, J.},
   title={Spectral Analysis of Large Dimensional Random Matrices},
   series={Mathematics Monograph Series},
   volume={2},
   publisher={Science Press},
   place={Beijing},
   date={2006}}

\bib{Bil1995}{book}{
   author={Billingsley, Patrick},
   title={Probability and measure},
   series={Wiley Series in Probability and Mathematical Statistics},
   edition={3},
   note={A Wiley-Interscience Publication},
   publisher={John Wiley \& Sons Inc.},
   place={New York},
   date={1995},
   pages={xiv+593}
}

\bib{BleMal2006}{article}{
   author={Bleher, Pavel},
   author={Mallison, Robert, Jr.},
   title={Zeros of sections of exponential sums},
   journal={Int. Math. Res. Not.},
   date={2006},
   pages={Art. ID 38937, 49}
}

\bib{BorSin2009}{article}{
   author={Borodin, A.},
   author={Sinclair, C. D.},
   title={The Ginibre ensemble of real random matrices and its scaling
   limits},
   journal={Comm. Math. Phys.},
   volume={291},
   date={2009},
   number={1},
   pages={177--224}
}

\bib{BouYauYin2012Bulk}{article}{
   author={Bourgade, P.},
   author={Yau, H.-T.},
   author={Yin, J.},
   title={Local circular law for random matrices},
   journal={preprint {\tt arXiv:1206.1449}}
   date={2012}
}

\bib{BoyGoh2007}{article}{
   author={Boyer, R.},
   author={Goh, W.},
   title={On the zero attractor of the Euler polynomials},
   journal={Adv. in Appl. Math.},
   volume={38},
   date={2007},
   number={1},
   pages={97--132}}

\bib{CosLeb1995}{article}{
  author={Costin, O.},
  author={Lebowitz,J.},
  title={Gaussian fluctuations in random matrices},
  journal={Phys. Rev. Lett.},
  volume={75},
  date={1995},
  number={1},
  pages={69--72}
}

\bib{Dav1995}{article}{
   author={Davies, E. B.},
   title={The functional calculus},
   journal={J. London Math. Soc. (2)},
   volume={52},
   date={1995},
   number={1},
   pages={166--176}}

\bib{Ede1997}{article}{
   author={Edelman, A.},
   title={The probability that a random real Gaussian matrix has $k$ real
   eigenvalues, related distributions, and the circular law},
   journal={J. Multivariate Anal.},
   volume={60},
   date={1997},
   number={2},
   pages={203--232}
}

\bib{ErdYauYin2010PTRF}{article}{
   author={Erd{\H{o}}s, L.},
   author={Yau, H.-T.},
   author={Yin, J.},
   title={Bulk universality for generalized Wigner matrices},
   journal={Probability Theory and Related Fields},
  volume={154}
  number={1-2}
  pages={341--407} 
  date={2012}
}

\bib{ErdYauYin2010Adv}{article}{
   author={Erd{\H{o}}s, L.},
   author={Yau, H.-T.},
   author={Yin, J.},
   title={Rigidity of Eigenvalues of Generalized Wigner Matrices},
   journal={Adv. Mat.},
   date={2012},
   volume={229},
   number={3},
   pages={1435--1515}
}

\bib{ForNag2007}{article}{
     author={Forrester, P. J.},
     author={Nagao, T.},
     title={Eigenvalue Statistics of the Real Ginibre Ensemble},
     journal={Phys. Rev. Lett.},
     volume={99},
     date={2007}
}

\bib{Gin1965}{article}{
   author={Ginibre, J.},
   title={Statistical ensembles of complex, quaternion, and real matrices},
   journal={J. Mathematical Phys.},
   volume={6},
   date={1965},
   pages={440--449}}

\bib{Gir1984}{article}{
   author={Girko, V. L.},
   title={The circular law},
   language={Russian},
   journal={Teor. Veroyatnost. i Primenen.},
   volume={29},
   date={1984},
   number={4},
   pages={669--679}
}

\bib{GotTik2010}{article}{
   author={G{\"o}tze, F.},
   author={Tikhomirov, A.},
   title={The circular law for random matrices},
   journal={Ann. Probab.},
   volume={38},
   date={2010},
   number={4},
   pages={1444--1491}}

\bib{KriKuiMcLMil2008}{article}{
   author={Kriecherbauer, T.},
   author={Kuijlaars, A. B. J.},
   author={McLaughlin, K. D. T.-R.},
   author={Miller, P. D.},
   title={Locating the zeros of partial sums of $e\sp z$ with
   Riemann-Hilbert methods},
   conference={
      title={Integrable systems and random matrices},
   },
   book={
      series={Contemp. Math.},
      volume={458},
      publisher={Amer. Math. Soc.},
      place={Providence, RI},
   },
   date={2008},
   pages={183--195}
}

\bib{Luk1960}{book}{
   author={Lukacs, Eugene},
   title={Characteristic functions},
   publisher={Griffin's Statistical Monographs\& Courses, No. 5. Hafner
   Publishing Co., New York},
   date={1960},
   pages={216}
}

\bib{PanZho2010}{article}{
   author={Pan, G.},
   author={Zhou, W.},
   title={Circular law, extreme singular values and potential theory},
   journal={J. Multivariate Anal.},
   volume={101},
   date={2010},
   number={3},
   pages={645--656}
}

\bib{PilYin2011}{article}{
    author={Pillai, N.},
    author={Yin, J.},
    title={Universality of Covariance matrices},
    journal={preprint {\tt arXiv:1110.2501}},
    date={2011}
    }

\bib{RidVir2007}{article}{
   author={Rider, B.},
   author={Vir{\'a}g, B.},
   title={The noise in the circular law and the Gaussian free field},
   journal={Int. Math. Res. Not. IMRN},
   date={2007},
   number={2}
}

\bib{Rud2008}{article}{
author={Rudelson, M.},
title={Invertibility of random matrices: Norm of the inverse},
journal={Ann. of Math.},
volume={168},
number={2} ,
date={2008},
pages={575--600}}

\bib{RudVer2008}{article}{
   author={Rudelson, M.},
   author={Vershynin, R.},
   title={The Littlewood-Offord problem and invertibility of random
   matrices},
   journal={Adv. Math.},
   volume={218},
   date={2008},
   number={2},
   pages={600--633}
}

\bib{Sin2007}{article}{
   author={Sinclair, C. D.},
   title={Averages over Ginibre's ensemble of random real matrices},
   journal={Int. Math. Res. Not. IMRN},
   date={2007},
   number={5}
}

\bib{Sos2000}{article}{
   author={Soshnikov, A.},
   title={Gaussian fluctuation for the number of particles in Airy, Bessel,
   sine, and other determinantal random point fields},
   journal={J. Statist. Phys.},
   volume={100},
   date={2000},
   number={3-4},
   pages={491--522}
}

\bib{TaoVu2008}{article}{
   author={Tao, T.},
   author={Vu, V.}
   title={Random matrices: the circular law},
   journal={Commun. Contemp. Math.},
   volume={10},
   date={2008},
   number={2}
   pages={261--307},
}

\bib{TaoVuKri2010}{article}{
   author={Tao, T.},
   author={Vu, V.},
   title={Random matrices: universality of ESDs and the circular law},
   note={With an appendix by Manjunath Krishnapur},
   journal={Ann. Probab.},
   volume={38},
   date={2010},
   number={5},
   pages={2023--2065}}

\bib{TaoVu2012}{article}{
   author={Tao, T.},
   author={Vu, V.},
   title={Random matrices: Universality of local spectral statistics of non-Hermitian matrices},
   journal={preprint {\tt arXiv:1206.1893}},
   date={2012}
}

 \end{biblist}

\end{bibdiv}

\end{document}